\newtheorem{theorem}{Theorem}[section]
\newtheorem{lemma}[theorem]{Lemma}
\newtheorem{corollary}[theorem]{Corollary}
\newtheorem{proposition}[theorem]{Proposition}
\theoremstyle{definition}
\newtheorem{remark}[theorem]{Remark}
\newtheorem*{xremark}{Remark}
\numberwithin{equation}{section}
\DeclareMathOperator{\RE}{Re}
\DeclareMathOperator{\IM}{Im}
\DeclareMathOperator{\meas}{meas}
\begin{document}

\title[Large deviations for values of $L$-functions]
{Large deviations for values of $L$-functions attached to cusp forms in the level aspect}

\author[M. Mine]{Masahiro Mine}
\address{Faculty of Science and Technology\\ Sophia University\\ 7-1 Kioi-cho, Chiyoda-ku, Tokyo 102-8554, Japan}
\email{m-mine@sophia.ac.jp}

\date{}

\begin{abstract}
We study the distribution of values of automorphic $L$-functions in a family of holomorphic cusp forms with prime level. 
We prove an asymptotic formula for a certain density function closely related to this value-distribution. 
The formula is applied to estimate large values of $L$-functions. 
\end{abstract}

\subjclass[2020]{Primary 11F66; Secondary 60F10}
%% 11M06	$\zeta (s)$ and $L(s, \chi)$
%% 11M32	Multiple Dirichlet series and zeta functions and multizeta values
%% 11M35	Hurwitz and Lerch zeta functions
%% 11M36	Selberg zeta functions and regularized determinants
%% 11M41	Other Dirichlet series and zeta functions
%% 11R42	Zeta functions and $L$-functions of number fields
%% 11F66	Langlands $L$-functions

\keywords{automorphic $L$-function, value-distribution, large deviation, $M$-function}

%\thanks{The work of this paper was supported by Grant-in-Aid for JSPS Fellows (Grant Number JP21J00529).}

\maketitle

\section{Introduction}\label{sec:1}
Let $q$ be a prime number. 
Denote by $S_2(q)$ the space of holomorphic cusp forms for the congruence subgroup $\Gamma_0(q)$ of weight $2$ with trivial nebentypus. 
We describe the Fourier series expansion of $f \in S_2(q)$ at infinity as 
\begin{gather*}
f(z)
=\sum_{n=1}^{\infty} a_f(n) \sqrt{n} \exp(2\pi inz)
\end{gather*}
for $z \in \mathbb{C}$ with $\IM(z)>0$ so that the automorphic $L$-function
\begin{gather*}
L(s,f)
=\sum_{n=1}^{\infty} \frac{a_f(n)}{n^s}
\end{gather*}
has the critical strip $0\leq \RE(s) \leq1$. 
The behavior of the values $L(s,f)$ in the critical strip has received attention by many researchers. 
Put $\mathfrak{q}(s)=q(|s|+3)^2$ for $s \in \mathbb{C}$. 
By a standard method of $L$-functions, we have the convexity bound
\begin{gather}\label{eq:02141557}
L(s,f)
\ll_\epsilon \mathfrak{q}(s)^{(1-\sigma)/2+\epsilon}
\end{gather}
for $s=\sigma+it$ with $0\leq \sigma \leq1$, where $\epsilon$ is any positive real number. 
Then the work of reducing the exponent in \eqref{eq:02141557} proceeded, and some results are included in \cite{DukeFriedlanderIwaniec1994, DukeFriedlanderIwaniec2001, BlomerHarcosMichel2007}. 
The Grand Riemann Hypothesis (GRH) is also useful to derive a sharp upper bound of $\log{L}(s,f)$ for $\RE(s)>1/2$. 
Let $B_2(q)$ be a basis of $S_2(q)$ consisting of primitive cusp forms. 
Throughout this paper, $\log_j$ indicates the $j$-fold iterated natural logarithm, that is, 
\begin{gather*}
\log_1=\log
\quad\text{and}\quad
\log_{j+1}=\log(\log_j) 
\end{gather*}
for $j \geq1$. 
Then GRH implies
\begin{gather}\label{eq:02141800}
\log{L}(s,f)
\ll \frac{(\log{\mathfrak{q}(s)})^{2-2\sigma}}{(2\sigma-1)\log_2{\mathfrak{q}(s)}}+\log_2{q(s)}
\end{gather}
for $s=\sigma+it$ with $1/2<\sigma \leq5/4$ if $f $ belongs to $B_2(q)$; see \cite[Theorem 5.19]{IwaniecKowalski2004}. 
It is believed that even \eqref{eq:02141800} is not best possible. 
Moreover, the true order of the magnitude of $\log{L}(\sigma,f)$ is expected to be $(\log{q})^{1-\sigma+o(1)}$ for $1/2<\sigma<1$ by some probabilistic observations. 
Define the distribution function
\begin{gather*}
\Phi_q(\sigma,\tau)
=\frac{\# \left\{f \in B_2(q) ~\middle|~ \log{L}(\sigma,f)>\tau\right\}}{\# B_2(q)} 
\end{gather*}
for $\sigma>1/2$ and $\tau \in \mathbb{R}$. 
Then it is reasonable to consider the decay of $\Phi_q(\sigma,\tau)$ with respect to $\tau$ such that $\tau \approx (\log{q})^{1-\sigma+o(1)}$ for $1/2<\sigma<1$. 
For technical reasons, the weighted distribution function 
\begin{gather*}
\widetilde{\Phi}_q(\sigma,\tau)
=\Bigg(\sum_{f \in B_2(q)} \omega_f\Bigg)^{-1}
\sum_{\substack{f \in B_2(q) \\ \log{L}(\sigma,f)>\tau}} \omega_f
\end{gather*}
is usually easier to deal with, where $\omega_f=(4\pi \langle f,f \rangle)^{-1}$ is the harmonic weight, and $\langle f,g \rangle$ indicates the Petersson inner product on $\Gamma_0(q) \backslash \mathbb{H}$. 
Lamzouri \cite{Lamzouri2011b} proved that there exist positive constants $A(\sigma)$ and $c(\sigma)$ satisfying 
\begin{gather}\label{eq:02151442}
\widetilde{\Phi}_q(\sigma,\tau)
=\exp\left(-A(\sigma) \tau^{\frac{1}{1-\sigma}} (\log{\tau})^{\frac{\sigma}{1-\sigma}}
\left(1+O\left( \frac{1}{\sqrt{\log{\tau}}}+r(\log{q},\tau) \right)\right)\right)
\end{gather}
uniformly in the range $1\ll \tau \leq c(\sigma)(\log{q})^{1-\sigma}(\log_2{q})^{-1}$, where 
\begin{gather*}
r(y,\tau)
=\left(\frac{\tau}{y^{1-\sigma}(\log{y})^{-1}}\right)^{(\sigma-\frac{1}{2})/(1-\sigma)}. 
\end{gather*}
One can deduce from \eqref{eq:02151442} a similar result for the distribution function $\Phi_q(\sigma,\tau)$. 
Recall the following facts on cusp forms:  
\begin{gather*}
\# B_2(q)
= \frac{q}{12}+O(1); 
\qquad
\sum_{f \in B_2(q)} \omega_f 
=1+O\left(q^{-3/2}\log{q}\right); \\
q^{-1}(\log{q})^{-3} 
\ll \omega_f 
\ll q^{-1}\log{q}. 
\end{gather*}
See \cite{Miyake2006} and \cite[$(1.16)$ and $(1.17)$]{CogdellMichel2004}. 
Thus the relation
\begin{gather*}
\log \Phi_q(\sigma,\tau)
=\log \widetilde{\Phi}_q(\sigma,\tau)+O\left(\log_2{q}\right)
\end{gather*}
follows from the definitions of $\Phi_q(\sigma,\tau)$ and $\widetilde{\Phi}_q(\sigma,\tau)$. 
It implies that \eqref{eq:02151442} remains valid for $\Phi_q(\sigma,\tau)$ in a suitable range of $\tau$. 
The purpose of this paper is to provide a more precise formula of $\Phi_q(\sigma,\tau)$ for $1/2<\sigma<1$ in the same range of $\tau$ as in the result of Lamzouri. 
Furthermore, we consider an analogous issue on $\Phi_q(1,\tau)$. 
Note that several authors \cite{Lamzouri2010, LiuRoyerWu2008, Xiao2016} proved asymptotic formulas of $\widetilde{\Phi}_q(1,\tau)$ that look quite different from \eqref{eq:02151442}; see \eqref{eq:02231541} below.

\subsection{Statement of results}\label{sec:1.1}
Let $\sigma>1/2$ and $\tau \in \mathbb{R}$. 
We begin with the limiting distribution function 
\begin{gather}\label{eq:02261944}
\Phi(\sigma,\tau)
=\lim_{q \to\infty} \Phi_q(\sigma,\tau). 
\end{gather}
The author \cite{Mine2020+} showed the existence of a continuous function $\mathcal{M}_\sigma:\mathbb{R}\to \mathbb{R}_{\geq0}$ such that $\Phi(\sigma,\tau)$ satisfies the identity
\begin{gather}\label{eq:05111549}
\Phi(\sigma,\tau)
=\int_{\tau}^{\infty} \mathcal{M}_\sigma(x) \,|dx|, 
\end{gather}
where $|dx|=(2\pi)^{-1/2}dx$. 
The probability density function $\mathcal{M}_\sigma$ is regarded as an analogue of the \emph{$M$-function} of Ihara \cite{Ihara2008} which was originally studied for Dirichlet $L$-functions. 
See also Lebacque--Zykin \cite{LebacqueZykin2018} and Matsumoto--Umegaki \cite{MatsumotoUmegaki2018, MatsumotoUmegaki2019, MatsumotoUmegaki2020} for related work on $M$-functions for automorphic $L$-functions. 
Then, we define 
\begin{gather}\label{eq:02261551}
F_\sigma(\kappa)
=\int_{\mathbb{R}} e^{\kappa x} \mathcal{M}_\sigma(x) \,|dx|
\quad\text{and}\quad
f_\sigma(\kappa)=\log{F}_\sigma(\kappa)
\end{gather}
for $\kappa \in \mathbb{R}$. 
The first main result is the following asymptotic formula for $\mathcal{M}_\sigma$. 

\begin{theorem}\label{thm:1.1}
Let $1/2<\sigma \leq1$. 
For $\tau>0$, we take a real number $\kappa=\kappa(\sigma,\tau)>0$ as the solution of $f'_\sigma(\kappa)=\tau$. 
Then we obtain
\begin{gather}\label{eq:02152104}
\mathcal{M}_\sigma(\tau+x)
=\frac{F_\sigma(\kappa) e^{-\kappa(\tau+x)}}{\sqrt{f''_\sigma(\kappa)}}
\left\{ \exp\left(-\frac{x^2}{2f''_\sigma(\kappa)}\right)
+O\left(\kappa^{-\frac{1}{2\sigma}} \sqrt{\log{\kappa}}\right) \right\}
\end{gather}
uniformly for all $x \in \mathbb{R}$ if $\tau>0$ is large enough, where the implied constant depends only on $\sigma$. 
\end{theorem}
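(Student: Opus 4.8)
The plan is to establish \eqref{eq:02152104} by the saddle-point (Laplace--Fourier inversion) method. Since $\mathcal{M}_\sigma$ decays faster than any exponential --- consistent with the rate in \eqref{eq:02151442} --- the generating function $F_\sigma$ extends analytically to a neighbourhood of the positive real axis and of the vertical line $\RE(s)=\kappa$, $f_\sigma$ is strictly convex with $f'_\sigma$ increasing to $\infty$ (so the equation $f'_\sigma(\kappa)=\tau$ has a unique solution, tending to $\infty$ with $\tau$), and one has the inversion formula
\begin{gather*}
\mathcal{M}_\sigma(\tau+x)
=\frac{e^{-\kappa(\tau+x)}}{\sqrt{2\pi}}\int_{-\infty}^{\infty}F_\sigma(\kappa+iu)\,e^{-iu(\tau+x)}\,du
=\frac{F_\sigma(\kappa)e^{-\kappa(\tau+x)}}{\sqrt{2\pi}}\int_{-\infty}^{\infty}e^{f_\sigma(\kappa+iu)-f_\sigma(\kappa)-iu(\tau+x)}\,du
\end{gather*}
for every $\kappa>0$, which I specialise to the saddle point $f'_\sigma(\kappa)=\tau$. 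The preliminary analytic work is to justify this inversion and, more importantly, to record sharp estimates --- as $\kappa\to\infty$ and on a small complex disc about $\kappa$ --- for the cumulants $f''_\sigma(\kappa),f'''_\sigma(\kappa),f^{(4)}_\sigma(\kappa)$ via the Euler-product expansion $f_\sigma=\sum_p\log F_{\sigma,p}$. One expects $f^{(k)}_\sigma(\kappa)\asymp\kappa^{1/\sigma-k}(\log\kappa)^{-(1-\sigma)/\sigma}$; in particular the normalised third cumulant obeys $f'''_\sigma(\kappa)\,f''_\sigma(\kappa)^{-3/2}\asymp\kappa^{-1/(2\sigma)}(\log\kappa)^{(1-\sigma)/(2\sigma)}\le\kappa^{-1/(2\sigma)}\sqrt{\log\kappa}$ (using $\sigma\ge 1/2$), which is precisely the error term in \eqref{eq:02152104}.

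Next I split the integral at a threshold $\delta=\delta(\sigma,\kappa)$ chosen in the window $f''_\sigma(\kappa)^{-1/2}\sqrt{\log\kappa}\ll\delta\ll f'''_\sigma(\kappa)^{-1/3}$, which is nonempty thanks to the cumulant bounds. On the central part $|u|\le\delta$, I expand $f_\sigma(\kappa+iu)-f_\sigma(\kappa)=iu\tau-\tfrac12 u^2 f''_\sigma(\kappa)-\tfrac{i}{6}u^3 f'''_\sigma(\kappa)+O\big(u^4\sup|f^{(4)}_\sigma|\big)$, use $f'_\sigma(\kappa)=\tau$, and expand the exponential of the cubic-and-higher terms. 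Integrating the Gaussian $e^{-iux-u^2 f''_\sigma(\kappa)/2}$ termwise --- and completing the tails $|u|>\delta$, which costs only $O(\kappa^{-C})$ by the choice of $\delta$ --- yields the main term $f''_\sigma(\kappa)^{-1/2}e^{-x^2/(2f''_\sigma(\kappa))}$ together with a cubic correction of the form $f'''_\sigma(\kappa)f''_\sigma(\kappa)^{-3/2}H_3\big(x/\sqrt{f''_\sigma(\kappa)}\big)e^{-x^2/(2f''_\sigma(\kappa))}$ with $H_3(t)=t^3-3t$, plus lower-order terms. Since $t\mapsto H_3(t)e^{-t^2/2}$ is bounded, this correction is $O(\kappa^{-1/(2\sigma)}\sqrt{\log\kappa})$ uniformly in $x$, while the Taylor remainder contributes $O\big(f^{(4)}_\sigma(\kappa)f''_\sigma(\kappa)^{-2}\big)=O(\kappa^{-2/\sigma})$ and the square of the cubic term $O\big(f'''_\sigma(\kappa)^2 f''_\sigma(\kappa)^{-3}\big)=O(\kappa^{-1/\sigma})$, both negligible. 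Multiplying by $F_\sigma(\kappa)e^{-\kappa(\tau+x)}(2\pi)^{-1/2}$ then reproduces the right-hand side of \eqref{eq:02152104}, up to the contribution of $|u|>\delta$.

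The crux, and what I expect to be the main obstacle, is therefore the non-central estimate
\begin{gather*}
\int_{|u|>\delta}|F_\sigma(\kappa+iu)|\,du\ll\frac{F_\sigma(\kappa)}{\sqrt{f''_\sigma(\kappa)}}\,\kappa^{-1/(2\sigma)}\sqrt{\log\kappa};
\end{gather*}
note that the common factor $e^{-\kappa(\tau+x)}$ cancels against the left-hand side of \eqref{eq:02152104}, so once this bound is available no restriction on $x$ is needed and the formula is uniform for all $x\in\mathbb{R}$. To prove it one writes, via the product formula, $\RE\big(f_\sigma(\kappa+iu)-f_\sigma(\kappa)\big)=\sum_p\log\big(|F_{\sigma,p}(\kappa+iu)|/F_{\sigma,p}(\kappa)\big)\le 0$ and extracts quantitative decay: for large $p$ the exponentially tilted local measure has variance $\asymp p^{-2\sigma}$, so $\log\big(|F_{\sigma,p}(\kappa+iu)|/F_{\sigma,p}(\kappa)\big)\ll-\min(u^2 p^{-2\sigma},1)$ on a suitable range, while the first few primes contribute enough decay in $|u|$ to secure absolute integrability. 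Summing over $p$ up to $P(u)$ with $P(u)^{-2\sigma}\asymp u^{-2}$ gives $|F_\sigma(\kappa+iu)|\ll F_\sigma(\kappa)\exp(-c\,\lambda_\sigma(u))$ with $\lambda_\sigma$ growing fast enough that the above integral is dominated by its behaviour near $|u|\asymp\delta$, where it matches $f''_\sigma(\kappa)^{-1/2}$ up to the admissible loss. Making this non-central bound effective uniformly in $\kappa$, together with pinning down the constants and logarithmic powers in the cumulant asymptotics for $f_\sigma$, is where the real work lies; the central computation is then essentially bookkeeping.
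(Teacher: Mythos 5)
Your plan coincides in essence with the paper's proof in Section~\ref{sec:4.2}: Fourier--Laplace inversion at the saddle $f'_\sigma(\kappa)=\tau$, a split of the $v$-integral at a threshold of size $\kappa^{1-1/(3\sigma)}$, a Gaussian approximation on the central range, and Euler-product decay bounds for the tails (the paper formalises these in Lemmas~\ref{lem:4.5} and~\ref{lem:4.6}, split between $|v|\le c_1\kappa$ and $|v|>c_1\kappa$, using different scales of primes in each regime). The one structural deviation is in the central integral: you isolate the cubic $H_3$ correction explicitly and bound $|H_3(t)e^{-t^2/2}|$, whereas the paper bounds the entire remainder $W(iv)-1$ in one stroke via
$\sum_{j\geq3}|f^{(j)}_\sigma(\kappa)|\,|v|^j/j!\ll\kappa^{1/\sigma-3}|v|^3/\log\kappa$
and integrates that against the Gaussian; both routes yield the same relative error $O(\kappa^{-1/(2\sigma)}\sqrt{\log\kappa})$.

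Two minor slips worth flagging, neither fatal: Proposition~\ref{prop:3.1} gives $f^{(j)}_\sigma(\kappa)\ll 2^jj!\,\kappa^{1/\sigma-j}/\log{\kappa}$ (and for $j=2$ the lower bound holds too), so the power of $\log\kappa$ in your asserted asymptotic $\kappa^{1/\sigma-k}(\log\kappa)^{-(1-\sigma)/\sigma}$ is incorrect; as a consequence the normalised third cumulant is in fact of exact order $\kappa^{-1/(2\sigma)}\sqrt{\log{\kappa}}$, not strictly below it, and your stated bounds for the quartic remainder and the square of the cubic term come out as $\kappa^{-1/\sigma}\log\kappa$ rather than the cleaner $\kappa^{-2/\sigma}$ and $\kappa^{-1/\sigma}$ you wrote. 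Because you only need these to be $o(\kappa^{-1/(2\sigma)}\sqrt{\log{\kappa}})$ the conclusion is unaffected, but the intermediate asymptotics as stated would not survive a careful check.
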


Remark that the main term of \eqref{eq:02152104} dominates the error term only if $|x|$ is small. 
Nevertheless, we see that Theorem \ref{thm:1.1} yields the formula
\begin{align*}
\Phi(\sigma,\tau)
&=\int_{0}^{\infty} \mathcal{M}_\sigma(\tau+x) \,|dx| \\
&=\frac{F_\sigma(\kappa) e^{-\kappa\tau}}{\kappa \sqrt{2\pi f''_\sigma(\kappa)}}
\left\{ 1+O\left(\kappa^{-\frac{1}{2\sigma}} \sqrt{\log{\kappa}}\right) \right\} 
\end{align*}
for $1/2<\sigma \leq1$. 
A similar formula was also proved by Liu--Royer--Wu \cite{LiuRoyerWu2008} for the limiting distribution function $\widetilde{\Phi}(1,\tau):=\lim_{q \to\infty} \widetilde{\Phi}_q(1,\tau)$. 
In Section \ref{sec:3}, we study the asymptotic behaviors of $f_\sigma(\kappa)$ and its derivatives. 
Then we derive the following corollaries of Theorem \ref{thm:1.1}. 
Throughout this paper, $I_\nu(u)$ denotes the modified Bessel function of the first kind of order $\nu$, and we put $g(u)=\log(I_1(2u)/u)$. 

\begin{corollary}\label{cor:1.2}
Let $1/2<\sigma<1$ and $N \in \mathbb{Z}_{\geq1}$. 
For $n=0,\ldots,N-1$, there exist polynomials $A_{n,\sigma}(x)$ of degree at most $n$ with $A_{0,\sigma}(x)\equiv1$ such that the formula 
\begin{gather*}
\Phi(\sigma,\tau)
=\exp\left(-A(\sigma)\tau^{\frac{1}{1-\sigma}} (\log{\tau})^{\frac{\sigma}{1-\sigma}} 
\left\{ \sum_{n=0}^{N-1} \frac{A_{n,\sigma}(\log_2{\tau})}{(\log{\tau})^n}
+O\left( \left(\frac{\log_2{\tau}}{\log{\tau}}\right)^N \right) \right\}\right)
\end{gather*}
holds if $\tau>0$ is large enough. 
Here, the implied constant depends on $\sigma$ and $N$, and $A(\sigma)$ is the positive constant determined as 
\begin{gather*}
A(\sigma)
=(1-\sigma)
\left(\frac{1-\sigma}{\sigma} \int_{0}^{\infty} g(y^{-\sigma}) \,dy \right)^{-\frac{\sigma}{1-\sigma}}. 
\end{gather*}
Furthermore, the polynomial $A_{1,\sigma}(x)$ is obtained as
\begin{gather*}
A_{1,\sigma}(x)
=\frac{\sigma}{1-\sigma}
\left\{x 
-\log \left(\frac{1-\sigma}{\sigma} \mathfrak{a}_0(\sigma)\right) 
+\frac{1-\sigma}{\sigma} \frac{\mathfrak{a}_1(\sigma)}{\mathfrak{a}_0(\sigma)} \right\}, 
\end{gather*}
where $\mathfrak{a}_0(\sigma)$ and $\mathfrak{a}_1(\sigma)$ are the constants represented as
\begin{gather*}
\mathfrak{a}_0(\sigma)
=\int_{0}^{\infty} g(y^{-\sigma}) \,dy 
\quad\text{and}\quad
\mathfrak{a}_1(\sigma)
=\int_{0}^{\infty} g(y^{-\sigma}) \log{y} \,dy. 
\end{gather*}
\end{corollary}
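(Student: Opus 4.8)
The plan is to extract the asymptotic expansion of $\Phi(\sigma,\tau)$ from Theorem~\ref{thm:1.1} by inverting the saddle-point relation $f'_\sigma(\kappa)=\tau$ and then substituting an asymptotic expansion of $f_\sigma(\kappa)$ and its derivatives. By the displayed consequence of Theorem~\ref{thm:1.1}, we have
\begin{gather*}
-\log\Phi(\sigma,\tau)
=\kappa\tau-f_\sigma(\kappa)+\log\bigl(\kappa\sqrt{2\pi f''_\sigma(\kappa)}\bigr)+O\bigl(\kappa^{-\frac{1}{2\sigma}}\sqrt{\log\kappa}\bigr),
\end{gather*}
so everything reduces to understanding $f_\sigma(\kappa)$, $f'_\sigma(\kappa)$, $f''_\sigma(\kappa)$ as $\kappa\to\infty$, and then solving $f'_\sigma(\kappa)=\tau$ for $\kappa$ in terms of $\tau$. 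The first block of work (which the excerpt says is carried out in Section~\ref{sec:3}) is to prove that $f_\sigma(\kappa)$ admits an asymptotic expansion of the shape $f_\sigma(\kappa)=\mathfrak{a}_0(\sigma)\,\kappa^{1/\sigma}(\log\kappa)^{?}\cdots$ — more precisely one wants $f_\sigma(\kappa)\sim \tfrac{\sigma}{?}\,\mathfrak{a}(\sigma)\kappa^{1/\sigma}$ times a convergent series in descending powers of $\log\kappa$ with polynomial-in-$\log_2\kappa$ coefficients, where the leading coefficient involves $\mathfrak{a}_0(\sigma)=\int_0^\infty g(y^{-\sigma})\,dy$ and the next one involves $\mathfrak{a}_1(\sigma)=\int_0^\infty g(y^{-\sigma})\log y\,dy$. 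This in turn comes from the Euler-product / Dirichlet-series representation of $F_\sigma(\kappa)=\int_{\mathbb R}e^{\kappa x}\mathcal M_\sigma(x)\,|dx|$, where the local factors are governed by $g(u)=\log(I_1(2u)/u)$ and a Laplace/Mellin analysis of $\sum_p g(\kappa p^{-\sigma})$ produces the integral $\int_0^\infty g(y^{-\sigma})\,dy$ after rescaling $y=p/\kappa^{1/\sigma}$, with correction terms from the prime-counting function supplying the $\log\kappa$ and $\log_2\kappa$ dependence.

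Next I would differentiate the expansion of $f_\sigma$ termwise — legitimate since these are asymptotic expansions of a convex real-analytic function whose derivatives are monotone — to get matching expansions for $f'_\sigma(\kappa)$ and $f''_\sigma(\kappa)$. Setting $f'_\sigma(\kappa)=\tau$ gives, to leading order, $\tau\asymp \kappa^{1/\sigma-1}(\log\kappa)^{\text{(something)}}$, hence $\kappa\asymp \tau^{\frac{\sigma}{1-\sigma}}(\log\tau)^{\text{(something)}}$; I then bootstrap this to a full expansion of $\kappa=\kappa(\sigma,\tau)$ in descending powers of $\log\tau$ with polynomial coefficients in $\log_2\tau$, by repeatedly substituting the current approximation back into $f'_\sigma(\kappa)=\tau$ and taking logarithms. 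The key elementary fact powering the inversion is that $\log\kappa=\frac{\sigma}{1-\sigma}\log\tau+O(\log_2\tau)$, so iterated logs of $\kappa$ and of $\tau$ agree up to additive constants, and the polynomial degrees in $\log_2\tau$ are preserved at each stage — that is where the bound $\deg A_{n,\sigma}\le n$ with $A_{0,\sigma}\equiv1$ comes from.

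Finally I substitute the expansion of $\kappa$ into $\kappa\tau-f_\sigma(\kappa)$; the Legendre-transform structure means the leading terms combine into $A(\sigma)\tau^{1/(1-\sigma)}(\log\tau)^{\sigma/(1-\sigma)}$ with $A(\sigma)=(1-\sigma)\bigl(\tfrac{1-\sigma}{\sigma}\mathfrak{a}_0(\sigma)\bigr)^{-\sigma/(1-\sigma)}$, and collecting the remaining terms — including the lower-order correction to the Legendre transform, the $\log(\kappa\sqrt{2\pi f''_\sigma(\kappa)})$ term (which is $O(\log\tau)$, hence contributes inside the brace at order $(\log\tau)^{-1}$ or lower relative to the main power), and the $O(\kappa^{-1/2\sigma}\sqrt{\log\kappa})$ error (which is negligible, of size $\tau^{-1/(2(1-\sigma))+o(1)}$) — organises them into $\sum_{n=0}^{N-1}A_{n,\sigma}(\log_2\tau)(\log\tau)^{-n}+O((\log_2\tau/\log\tau)^N)$. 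Carrying out the bookkeeping at orders $n=0,1$ explicitly yields $A_{0,\sigma}\equiv1$ and the stated $A_{1,\sigma}(x)=\frac{\sigma}{1-\sigma}\bigl\{x-\log(\tfrac{1-\sigma}{\sigma}\mathfrak{a}_0(\sigma))+\tfrac{1-\sigma}{\sigma}\tfrac{\mathfrak{a}_1(\sigma)}{\mathfrak{a}_0(\sigma)}\bigr\}$, the $\mathfrak{a}_1$ term entering precisely because the $\log\kappa\mapsto\log\tau$ conversion inside $f_\sigma$'s leading term shifts $\mathfrak{a}_0$ by a multiple of $\mathfrak{a}_1$. The main obstacle is the first step: establishing the asymptotic expansion of $f_\sigma(\kappa)$ to all orders with the correct coefficients $\mathfrak{a}_n(\sigma)=\int_0^\infty g(y^{-\sigma})(\log y)^n\,dy$ and controlling the prime-sum-versus-integral discrepancy uniformly — once that expansion and its termwise derivatives are in hand, the inversion and substitution are routine (if lengthy) asymptotic algebra.
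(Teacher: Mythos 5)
Your proposal follows essentially the same route as the paper: start from the displayed consequence of Theorem~\ref{thm:1.1} (Corollary~\ref{cor:4.7}), write $\log\Phi(\sigma,\tau)=f_\sigma(\kappa)-\kappa f'_\sigma(\kappa)+O(\log\kappa)$, feed in the asymptotic expansion of $f_\sigma^{(j)}(\kappa)$ in descending powers of $\log\kappa$ with coefficients built from integrals of $g$ (this is the paper's Proposition~\ref{prop:3.1}, with $\mathfrak{g}_{n,j}(\sigma)$ related to your $\mathfrak{a}_n(\sigma)$ by the substitution $u=y^{-\sigma}$ plus integration by parts), invert $f'_\sigma(\kappa)=\tau$ by bootstrapping to get $\kappa=B(\sigma)(\tau\log\tau)^{\sigma/(1-\sigma)}\{1+\cdots\}$ (Proposition~\ref{prop:4.2}), and substitute. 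The only small point to tighten is that the paper establishes the expansion of each derivative $f_\sigma^{(j)}$ directly from the Euler-product decomposition rather than differentiating an expansion of $f_\sigma$ termwise — the latter would need careful justification — but this is a presentational difference, not a different argument.
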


To simplify the statement in the case $\sigma=1$, we put $\tau=2\log{t}+2\gamma$ by using Euler's constant $\gamma=0.577\ldots$. 
Then we have
\begin{gather*}
\Phi_q(1,\tau)
=\frac{\# \left\{f \in B_2(q) ~\middle|~ {L}(1,f)>(e^\gamma t)^2\right\}}{\# B_2(q)}. 
\end{gather*}
For $u>0$, we define the function $g_*(u)$ as
\begin{gather}\label{eq:02240037}
g_*(u)
=
\begin{cases}
g(u)
& \text{if $0<u \leq1$}, 
\\
g(u)-2u
& \text{if $u>1$}. 
\end{cases}
\end{gather}
The following corollary is a variant of \cite[Theorem 1.5]{LiuRoyerWu2008} which was originally stated for the distribution function $\widetilde{\Phi}(1,\tau)$. 

\begin{corollary}\label{cor:1.3}
Let $N \in \mathbb{Z}_{\geq1}$ and put $\tau=2\log{t}+2\gamma$. 
For $n=0,\ldots,N-1$, there exist real numbers $a_n$ with $a_0=1$ such that the formula 
\begin{gather*}
\Phi(1,\tau)
=\exp\left(-\frac{e^{t-A}}{t}
\left\{ \sum_{n=0}^{N-1} \frac{a_n}{t^n}
+O\left(\frac{1}{t^N}\right) \right\}\right)
\end{gather*}
holds if $t>0$ is large enough. 
Here, the implied constant depends on $N$, and $A$ is the constant determined as
\begin{gather*}
A
=1+\frac{1}{2}\int_{0}^{\infty} g_*(y^{-1}) \,dy-\log{2}. 
\end{gather*}
Furthermore, the real number $a_1$ is obtained as
\begin{gather*}
a_1
=-\frac{1}{8}\mathfrak{a}_{0}^2
+\frac{1}{2}\mathfrak{a}_{1}
+\frac{1}{2}, 
\end{gather*}
where $\mathfrak{a}_0$ and $\mathfrak{a}_1$ are the constants represented as
\begin{gather*}
\mathfrak{a}_0
=\int_{0}^{\infty} g_*(y^{-1}) \,dy 
\quad\text{and}\quad
\mathfrak{a}_1
=\int_{0}^{\infty} g_*(y^{-1}) \log{y} \,dy. 
\end{gather*}
\end{corollary}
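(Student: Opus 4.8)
The plan for proving Corollary~\ref{cor:1.3} is to feed the asymptotic behaviour of $f_1$ and its first two derivatives into the consequence of Theorem~\ref{thm:1.1} recorded just after its statement, specialized to $\sigma=1$. Taking logarithms in
\[
\Phi(1,\tau)=\frac{F_1(\kappa)e^{-\kappa\tau}}{\kappa\sqrt{2\pi f_1''(\kappa)}}\Bigl\{1+O\bigl(\kappa^{-1/2}\sqrt{\log\kappa}\bigr)\Bigr\},\qquad f_1'(\kappa)=\tau,
\]
and using $\tau=f_1'(\kappa)$, one gets
\[
\log\Phi(1,\tau)=\bigl(f_1(\kappa)-\kappa f_1'(\kappa)\bigr)-\log\kappa-\tfrac12\log\bigl(2\pi f_1''(\kappa)\bigr)+O\bigl(\kappa^{-1/2}\sqrt{\log\kappa}\bigr).
\]
Since $\kappa$ will turn out to have size $e^{t}$ up to a constant factor, the last error term is exponentially small in $t$; and because $f_1''(\kappa)\asymp(\kappa\log\kappa)^{-1}$, the two remaining lower-order terms combine to $-\tfrac12\log\kappa+O(\log_2\kappa)=-\tfrac12 t+O(\log t)$. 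Both are $o\bigl(e^{t-A}t^{-N}\bigr)$ for every $N$ and hence get absorbed into the claimed error, so the whole content is the expansion of the ``Legendre transform'' $f_1(\kappa)-\kappa f_1'(\kappa)$ followed by re-expressing it through $t$ via $f_1'(\kappa)=\tau=2\log t+2\gamma$.

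For the asymptotics of $f_1$ --- the work carried out in Section~\ref{sec:3} --- I would begin from the Euler-product form $f_1(\kappa)=\sum_p\log\mathbb{E}\bigl[(1-\alpha_p p^{-1})^{-\kappa}(1-\beta_p p^{-1})^{-\kappa}\bigr]$ underlying $\mathcal{M}_1$, with the local angles Sato--Tate distributed. Peeling off the first term of each local logarithm writes $f_1(\kappa)=\sum_p g(\kappa/p)+\sum_p h_p(\kappa)$, where $h_p(\kappa)$ collects the higher prime powers. A Laplace-type evaluation of the local integrals (the measure tilts toward angle $0$ for $p\ll\kappa$, while one expands $e^{h_p}$ for $p\gg\kappa$) shows $\sum_p h_p(\kappa)=2\kappa\sum_p\bigl(-\log(1-1/p)-1/p\bigr)+O_N\bigl(\kappa(\log\kappa)^{-N}\bigr)=2(\gamma-M)\kappa+O_N\bigl(\kappa(\log\kappa)^{-N}\bigr)$, $M$ being the Mertens constant. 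Writing $g$ through $g_*$ as in \eqref{eq:02240037} splits $\sum_p g(\kappa/p)=2\kappa\sum_{p<\kappa}p^{-1}+\sum_p g_*(\kappa/p)$; here $g_*$ replaces $g$ precisely because the integral $\int_0^\infty g(y^{-\sigma})\,dy$ diverges at $\sigma=1$ (the integrand is $\sim 2/y$ near $y=0$), the divergence being carried by the prime sum $2\kappa\sum_{p<\kappa}p^{-1}$. By Mertens' theorem this sum is $2\kappa\log_2\kappa+2M\kappa+O_N\bigl(\kappa(\log\kappa)^{-N}\bigr)$, so the two occurrences of $M$ cancel --- this is the origin of the shift $2\gamma$ in $\tau=2\log t+2\gamma$. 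For the remaining sum, partial summation against the prime-counting function together with the prime number theorem, the change of variable $t=\kappa/u$, and the geometric expansion $(\log\kappa-\log u)^{-1}=(\log\kappa)^{-1}\sum_{j\ge0}(\log u/\log\kappa)^j$ give $\sum_p g_*(\kappa/p)=\kappa\sum_{j\ge0}(\log\kappa)^{-j-1}\int_0^\infty g_*(u)(\log u)^j u^{-2}\,du+O_N\bigl(\kappa(\log\kappa)^{-N-1}\bigr)$, and $y=1/u$ identifies the $j=0,1$ integrals with $\mathfrak{a}_0$ and $-\mathfrak{a}_1$. Altogether $f_1(\kappa)=2\kappa\log_2\kappa+2\gamma\kappa+\mathfrak{a}_0\kappa(\log\kappa)^{-1}-\mathfrak{a}_1\kappa(\log\kappa)^{-2}+\cdots$, and differentiating termwise yields $f_1'$ and $f_1''$. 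The key point is that every term of the shape (constant)$\times\kappa$ --- in particular $2\gamma\kappa$ --- cancels in $f_1(\kappa)-\kappa f_1'(\kappa)$, leaving $f_1(\kappa)-\kappa f_1'(\kappa)=-2\kappa(\log\kappa)^{-1}+\mathfrak{a}_0\kappa(\log\kappa)^{-2}+\cdots$.

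The final step is the inversion. Put $L=\log\kappa$. From $f_1'(\kappa)=2\log L+2\gamma+(2+\mathfrak{a}_0)L^{-1}-(\mathfrak{a}_0+\mathfrak{a}_1)L^{-2}+\cdots$, the equation $f_1'(\kappa)=2\log t+2\gamma$ becomes $\log L=\log t-\tfrac{2+\mathfrak{a}_0}{2}L^{-1}+\tfrac{\mathfrak{a}_0+\mathfrak{a}_1}{2}L^{-2}+\cdots$, which one solves by bootstrapping from $L\approx t$ to get $\log\kappa=L=t-1-\tfrac12\mathfrak{a}_0+\bigl(\tfrac12\mathfrak{a}_1-\tfrac12-\tfrac18\mathfrak{a}_0^2\bigr)t^{-1}+\cdots$, hence $\kappa=e^{t-1-\mathfrak{a}_0/2}\bigl(1+(\tfrac12\mathfrak{a}_1-\tfrac12-\tfrac18\mathfrak{a}_0^2)t^{-1}+\cdots\bigr)$. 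Substituting $\kappa$ and $L$ into $f_1(\kappa)-\kappa f_1'(\kappa)=-2\kappa L^{-1}\bigl(1-\tfrac{\mathfrak{a}_0}{2}L^{-1}+\cdots\bigr)$ and simplifying, the prefactor $2e^{t-1-\mathfrak{a}_0/2}$ equals $e^{t-A}$ with $A=1+\tfrac12\mathfrak{a}_0-\log2=1+\tfrac12\int_0^\infty g_*(y^{-1})\,dy-\log2$, and one reads off
\[
\log\Phi(1,\tau)=-\frac{e^{t-A}}{t}\Bigl\{1+\frac{a_1}{t}+\cdots+\frac{a_{N-1}}{t^{N-1}}+O\bigl(t^{-N}\bigr)\Bigr\},
\]
with $a_0=1$ and, after collecting the $t^{-1}$ contributions, $a_1=\tfrac12\mathfrak{a}_1+\tfrac12-\tfrac18\mathfrak{a}_0^2$; the general $a_n$ is the explicit polynomial in $\mathfrak{a}_0,\dots,\mathfrak{a}_n$ (equivalently in $\int_0^\infty g_*(y^{-1})(\log y)^j\,dy$ for $0\le j\le n$) produced by carrying this computation to order $(\log\kappa)^{-N}$.

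The main obstacle is the uniform, power-saving expansion of $f_1(\kappa)$ in Section~\ref{sec:3}: one must estimate $\sum_p h_p(\kappa)$ uniformly across all ranges of $p$ --- showing in particular that, beyond the clean contribution $2(\gamma-M)\kappa$ coming from small primes, everything (the transition range $p\asymp\kappa$, the tail $p\gg\kappa$, the subleading Laplace terms, whose per-prime constants are seen to be summable) is negligible against $\kappa(\log\kappa)^{-N}$ --- and one must carry along the prime number theorem error in the partial-summation step, of size $\kappa\exp(-c\sqrt{\log\kappa})$, which is harmless but must be tracked. Once the expansions of $f_1,f_1',f_1''$ are secured, the inversion and substitution are routine; the only delicate point there is to keep enough terms --- two orders to pin down $A$ and $a_1$, and $N$ orders in general --- so that the coefficients emerge exactly.
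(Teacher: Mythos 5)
Your proposal is correct and follows essentially the same route as the paper: take logarithms in the saddle-point formula (Corollary~\ref{cor:4.7}), reduce to the Legendre transform $f_1(\kappa)-\kappa f_1'(\kappa)$ plus absorbed lower-order terms, expand $f_1$ and its derivatives via the Euler product decomposition and partial summation against the prime number theorem (the content of Proposition~\ref{prop:3.2}), invert $f_1'(\kappa)=2\log t+2\gamma$ to get $\log\kappa = t-1-\tfrac12\mathfrak{a}_0+O(1/t)$ (Proposition~\ref{prop:4.3}), and substitute. Your split $f_1=\sum_p g(\kappa/p)+\sum_p h_p(\kappa)$ with the Mertens-constant cancellation is cosmetically different bookkeeping from the paper's three-range $(p\le y_1,\ y_1<p<y_2,\ p\ge y_2)$ treatment but is substantively the same, and your computed values of $A$ and $a_1$ agree with the paper's.
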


Then, we proceed to study the distribution function $\Phi_q(\sigma,\tau)$ for $1/2<\sigma \leq1$. 
Using a certain asymptotic formula for complex moments of $L(\sigma,f)$ proved in \cite{Mine2020+}, we associate $\Phi_q(\sigma,\tau)$ with $\Phi(\sigma,\tau)$ as follows. 

\begin{theorem}\label{thm:1.4}
Let $B \geq1$ be a real number. 
\begin{itemize}
\item[$(\mathrm{i})$]
For $1/2<\sigma<1$, there exists a positive constant $c(\sigma,B)$ such that 
\begin{gather*}
\Phi_q(\sigma,\tau)
=\Phi(\sigma,\tau)
\left( 1+O\left(\frac{1}{(\log{q})^B}
+\frac{(\tau \log{\tau})^{\frac{\sigma}{1-\sigma}}}{(\log{q})^\sigma}\right) \right)
\end{gather*}
holds uniformly in the range $1\ll \tau \leq c(\sigma,B)(\log{q})^{1-\sigma}(\log_2{q})^{-1}$, where the implied constant depends on $\sigma$ and $B$. 
\item[$(\mathrm{ii})$]
Put $\tau=2\log{t}+2\gamma$. 
Then there exists a positive constant $c(B)$ such that 
\begin{gather*}
\Phi_q(1,\tau)
=\Phi(1,\tau)
\left( 1+O\left(\frac{1}{(\log{q})^B}
+\frac{e^t}{(\log{q})(\log_2{q}\,\log_3{q})^{-1}}\right) \right)
\end{gather*}
holds uniformly in the range $1\ll t \leq \log_2{q}-\log_3{q}-\log_4{q}-c(B)$, where the implied constant depends on $B$. 
\end{itemize}
\end{theorem}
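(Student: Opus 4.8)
The plan is to re-run, at finite level $q$, the saddle-point argument that produces Theorem~\ref{thm:1.1} (and, through Section~\ref{sec:3}, Corollaries~\ref{cor:1.2}--\ref{cor:1.3}), feeding in the complex-moment asymptotic of \cite{Mine2020+} in place of the defining integral for $F_\sigma$. Write
\[
M_q(z)=\frac{1}{\#B_2(q)}\sum_{f\in B_2(q)}L(\sigma,f)^{z},
\]
the unweighted complex moment, and recall from \cite{Mine2020+} that $M_q(z)=F_\sigma(z)\bigl(1+O(\mathrm{err}(q,z))\bigr)$ uniformly for $z$ in a region whose precise shape will be responsible for the admissible ranges of $\tau$ and $t$ below. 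Fix $\tau$ (or $t$), let $\kappa=\kappa(\sigma,\tau)$ be the saddle point $f'_\sigma(\kappa)=\tau$ of Theorem~\ref{thm:1.1}, and perform the exponential tilt: since $\mathbf 1_{\{x>\tau\}}=e^{\kappa x}\,e^{-\kappa x}\mathbf 1_{\{x>\tau\}}$,
\[
\Phi_q(\sigma,\tau)=M_q(\kappa)\,e^{-\kappa\tau}\,\mathbb E_{q,\kappa}\!\bigl[e^{-\kappa(X-\tau)}\mathbf 1_{\{X>\tau\}}\bigr],\qquad X=\log L(\sigma,f),
\]
where $\mathbb E_{q,\kappa}$ is expectation with respect to the probability measure on $B_2(q)$ proportional to $L(\sigma,f)^{\kappa}$; the same identity holds for $\Phi(\sigma,\tau)$ with $F_\sigma(\kappa)$ and the tilted measure $d\mu_{\infty,\kappa}\propto e^{\kappa x}\mathcal M_\sigma(x)\,|dx|$ in place of $M_q(\kappa)$ and $\mathbb E_{q,\kappa}$. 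Dividing, one is reduced to showing that $M_q(\kappa)/F_\sigma(\kappa)=1+O(\cdots)$ (immediate from the moment asymptotic) and that the two tilted expectations of $e^{-\kappa(X-\tau)}\mathbf 1_{\{X>\tau\}}$ agree up to the stated error.

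The tilted expectations are compared through their characteristic functions. Writing $\nu_{q,\kappa}$ (resp. $\nu_{\infty,\kappa}$) for the law of $X-\tau$ under the $\kappa$-tilt, one has $\widehat\nu_{q,\kappa}(v)=e^{iv\tau}M_q(\kappa-iv)/M_q(\kappa)$ and $\widehat\nu_{\infty,\kappa}(v)=e^{iv\tau}F_\sigma(\kappa-iv)/F_\sigma(\kappa)$, and the Laplace-inversion identity $e^{-\kappa y}\mathbf 1_{\{y>0\}}=\tfrac1{2\pi}\int_{\mathbb R}(\kappa-iv)^{-1}e^{ivy}\,dv$ turns the difference of the two expectations into $\tfrac1{2\pi}\int_{\mathbb R}(\kappa-iv)^{-1}\bigl(\widehat\nu_{q,\kappa}(v)-\widehat\nu_{\infty,\kappa}(v)\bigr)\,dv$; note that $|(\kappa-iv)^{-1}|\le\kappa^{-1}$, so there is no singularity at $v=0$. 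On the segment $|v|\le V$, where the moment asymptotic applies, one substitutes $M_q(\kappa-iv)-F_\sigma(\kappa-iv)=O(\mathrm{err}(q,\kappa-iv)\,|F_\sigma(\kappa-iv)|)$; using the rapid decay of $F_\sigma(\kappa-iv)$ in $v$ (a consequence of the smoothness of $\mathcal M_\sigma$ proved in \cite{Mine2020+}), together with $\int_{\mathbb R}|F_\sigma(\kappa+iv)|\,dv\ll F_\sigma(\kappa)/\sqrt{f''_\sigma(\kappa)}$, this part is $\ll \mathrm{err}(q,\kappa)\cdot F_\sigma(\kappa)e^{-\kappa\tau}/(\kappa\sqrt{f''_\sigma(\kappa)})\asymp \mathrm{err}(q,\kappa)\cdot\Phi(\sigma,\tau)$ by Theorem~\ref{thm:1.1}. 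Inserting the asymptotics of $f_\sigma$ and its derivatives from Section~\ref{sec:3}—in particular $\kappa\asymp(\tau\log\tau)^{\sigma/(1-\sigma)}$ for $1/2<\sigma<1$ and $\kappa\asymp e^{t}$ for $\sigma=1$—converts $\mathrm{err}(q,\kappa)$ into the second error term in each part of the statement, while the part of $\mathrm{err}$ that does not grow with $|z|$ yields the $(\log q)^{-B}$ term once $|z|$ is kept below a suitable power of $\log q$ (this is what forces $c(\sigma,B)$, resp. $c(B)$, in the ranges). The contribution of $|v|>V$ is estimated separately: the part coming from $\widehat\nu_{\infty,\kappa}$ is negligible by the decay of $F_\sigma$, and the part coming from $\widehat\nu_{q,\kappa}$ is controlled by a truncated-Perron bound whose error is $\ll e^{-\kappa\tau}M_q(\kappa)\,\mathbb E_{q,\kappa}\bigl[\min(1,(V|X-\tau|)^{-1})\bigr]$; one bounds the last expectation by establishing a near-optimal local-density estimate $\mu_{q,\kappa}(|X-\tau|<h)\ll h/\sqrt{f''_\sigma(\kappa)}$ from the moment asymptotic itself (applied on $|v|\le V$), so that choosing $V$ a large enough power of $\log q$ times $\kappa$ makes this term $\ll(\log q)^{-B}\Phi(\sigma,\tau)$. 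Part~(ii) is the same argument specialized to $\sigma=1$, with $\tau=2\log t+2\gamma$ and the $\sigma=1$ asymptotics of Section~\ref{sec:3}.

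The main obstacle is the last point: extracting a sharp local-concentration bound for the tilted value distribution $\nu_{q,\kappa}$ from the complex-moment asymptotic. A crude Chernoff-type inequality only yields $\mu_{q,\kappa}(|X-\tau|<h)\le e^{O(\kappa h)}$, which is useless here, so one genuinely needs a quasi-local central limit theorem for $\nu_{q,\kappa}$; since $\widehat\nu_{q,\kappa}(v)$ inherits the near-Gaussian decay of $\widehat\nu_{\infty,\kappa}(v)$ only on the range $|v|\le V$ governed by \cite{Mine2020+}, the argument must be arranged (via band-limited majorants of intervals, or a bootstrap) so that the uncontrolled range $|v|>V$ contributes negligibly—and balancing the smoothing scale $h$, the density bound, and the reach $V$ of the moment asymptotic is precisely what pins down the uniformity ranges $\tau\le c(\sigma,B)(\log q)^{1-\sigma}(\log_2 q)^{-1}$ and $t\le\log_2 q-\log_3 q-\log_4 q-c(B)$ and the exact shape of the error terms. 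The remaining ingredients—the rapid decay of $F_\sigma$ off the real axis, the trivial bound $|M_q(\kappa+iv)|\le M_q(\kappa)$, and the asymptotics of $f_\sigma,f'_\sigma,f''_\sigma$ in Section~\ref{sec:3}—are comparatively routine.
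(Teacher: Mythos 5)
Your plan shares the paper's high-level strategy (exponential tilt at the saddle $\kappa$, compare moment-generating objects on the vertical line through $\kappa$, use the reach of the moment asymptotic of \cite{Mine2020+} to pin down the admissible ranges), but the key comparison step is genuinely different from the paper's, and this is precisely where a real gap appears. The paper (Section~\ref{sec:5}) invokes the Esseen inequality (Lemma~\ref{lem:5.2}) applied to the two tilted measures $P$ and $Q$. The crucial feature of Esseen is asymmetry: the smoothing penalty $24K/(\pi R)$ involves only the sup of the density of $\Psi$, i.e.\ of the tilted \emph{limiting} measure $\mathcal{N}_\sigma$, which is bounded by \eqref{eq:03211627}. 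The discrete, empirical tilted measure on $B_2(q)$ needs no regularity at all. One then passes from the sup-norm bound on $\Phi-\Psi$ to the quantity $\int_0^\infty e^{-\kappa\xi}\,d(\Phi-\Psi)(\xi)$ by a straightforward integration by parts. By contrast, your direct Laplace inversion $e^{-\kappa y}\mathbf{1}_{\{y>0\}}=\frac{1}{2\pi}\int(\kappa-iv)^{-1}e^{ivy}\,dv$ is only a principal-value identity; truncating it produces a kernel error $\ll\min(1,(V|y|)^{-1})$, and controlling its integral against $d\nu_{q,\kappa}$ requires exactly the small-ball estimate $\nu_{q,\kappa}(|X-\tau|<h)\ll h/\sqrt{f''_\sigma(\kappa)}$ that you flag as ``the main obstacle.'' But $\nu_{q,\kappa}$ is a discrete measure on at most $\#B_2(q)$ points, so this estimate does not follow from general principles, and to derive it from the moment asymptotic you would in effect have to re-prove an Esseen-type smoothing inequality first (your ``band-limited majorants / bootstrap'' remark is a pointer to this, not a proof). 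As written, the proposal therefore does not close.

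Two further points you should fix. First, the moment asymptotic of \cite{Mine2020+} (Proposition~\ref{prop:5.1}) is \emph{not} an asymptotic for the full average $\frac{1}{\#B_2(q)}\sum_{f\in B_2(q)}L(\sigma,f)^s$: it holds only after excising an exceptional set $\mathcal{E}_q$, and the paper tracks this carefully, comparing $\Phi_q$ with $\Phi_q^*$ at the end using \eqref{eq:03210327} together with a lower bound on $\Phi(\sigma,\tau)$ from Corollaries~\ref{cor:1.2}--\ref{cor:1.3}; your $M_q(z)$ silently ignores this. Second, you write the moment error as $M_q(\kappa-iv)-F_\sigma(\kappa-iv)=O\bigl(\mathrm{err}\cdot|F_\sigma(\kappa-iv)|\bigr)$, i.e.\ decaying with $|v|$ — but the actual error in Proposition~\ref{prop:5.1} is $O\bigl(F_\sigma(\kappa)(\log q)^{-B-2}\bigr)$, uniform in $v$ and \emph{not} proportional to $|F_\sigma(s)|$. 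This matters: with the correct, non-decaying error you cannot invoke $\int|F_\sigma(\kappa+iv)|\,dv\ll F_\sigma(\kappa)/\sqrt{f''_\sigma(\kappa)}$ the way you do, and you instead get a $\log$-of-the-range factor, as the paper's estimate \eqref{eq:03212145} shows. (One small thing your route does buy: $|\kappa-iv|^{-1}\le\kappa^{-1}$ removes the $v\to0$ singularity, whereas the paper's integrand $|\phi(v)-\psi(v)|/v$ forces the extra near-origin analysis with the second-moment bound \eqref{eq:05111750}--\eqref{eq:05111751}. But this minor simplification does not compensate for the unresolved $|v|>V$ contribution.)
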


From Corollary \ref{cor:1.2} and Theorem \ref{thm:1.4} $(\mathrm{i})$, we deduce the following result on $\Phi_q(\sigma,\tau)$ for $1/2<\sigma<1$. 
It refines \eqref{eq:02151442} in the desired range of $\tau$. 

\begin{corollary}\label{cor:1.5}
Let $1/2<\sigma<1$ and $N \in \mathbb{Z}_{\geq1}$. 
Then there exists a positive constant $c(\sigma)$ such that 
\begin{gather*}
\Phi_q(\sigma,\tau)
=\exp\left(-A(\sigma)\tau^{\frac{1}{1-\sigma}} (\log{\tau})^{\frac{\sigma}{1-\sigma}} 
\left\{ \sum_{n=0}^{N-1} \frac{A_{n,\sigma}(\log_2{\tau})}{(\log{\tau})^n}
+O\left( \left(\frac{\log_2{\tau}}{\log{\tau}}\right)^N \right) \right\}\right)
\end{gather*}
holds uniformly in the range $1\ll \tau \leq c(\sigma)(\log{q})^{1-\sigma}(\log_2{q})^{-1}$, where $A(\sigma)$ and $A_{n,\sigma}(x)$ are as in Corollary \ref{cor:1.2}. 
Here, the implied constant depends on $\sigma$ and $N$. 
\end{corollary}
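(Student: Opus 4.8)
The plan is to combine Corollary~\ref{cor:1.2}, which gives the precise asymptotic expansion of the limiting distribution $\Phi(\sigma,\tau)$, with Theorem~\ref{thm:1.4}~$(\mathrm{i})$, which controls the discrepancy between $\Phi_q(\sigma,\tau)$ and $\Phi(\sigma,\tau)$ in the relevant range of $\tau$. Since both ingredients are already available, the only real work is to check that the multiplicative error from Theorem~\ref{thm:1.4}~$(\mathrm{i})$ is absorbed into the error term appearing inside the exponential in Corollary~\ref{cor:1.2}, after choosing the constant $c(\sigma)$ appropriately.

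\begin{proof}[Proof of Corollary~\ref{cor:1.5}]
Fix $1/2<\sigma<1$ and $N\in\mathbb{Z}_{\geq1}$. Apply Theorem~\ref{thm:1.4}~$(\mathrm{i})$ with, say, $B=1$: there is a constant $c_0(\sigma)=c(\sigma,1)>0$ such that
\begin{gather*}
\Phi_q(\sigma,\tau)
=\Phi(\sigma,\tau)
\left(1+O\!\left(\frac{1}{\log{q}}
+\frac{(\tau\log{\tau})^{\frac{\sigma}{1-\sigma}}}{(\log{q})^\sigma}\right)\right)
\end{gather*}
uniformly for $1\ll\tau\leq c_0(\sigma)(\log{q})^{1-\sigma}(\log_2{q})^{-1}$. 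In this range one has $\tau\log\tau\ll(\log q)^{1-\sigma}$, hence $(\tau\log\tau)^{\sigma/(1-\sigma)}\ll(\log q)^{\sigma}$, so the second error term is $O(1)$; more importantly, by shrinking the admissible constant to a suitable $c(\sigma)\le c_0(\sigma)$ we may assume $\tau\le c(\sigma)(\log q)^{1-\sigma}(\log_2 q)^{-1}$ forces $\tau$ itself to be large, so that Corollary~\ref{cor:1.2} applies. Taking logarithms,
\begin{gather*}
\log\Phi_q(\sigma,\tau)
=\log\Phi(\sigma,\tau)
+O\!\left(\frac{1}{\log q}+\frac{(\tau\log\tau)^{\frac{\sigma}{1-\sigma}}}{(\log q)^{\sigma}}\right),
\end{gather*}
where we used $\log(1+u)=O(u)$ for $|u|$ bounded, valid once $c(\sigma)$ is small enough that the error term above is $\le 1/2$.

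It remains to show that both error terms on the right are dominated by
\[
A(\sigma)\,\tau^{\frac{1}{1-\sigma}}(\log\tau)^{\frac{\sigma}{1-\sigma}}\left(\frac{\log_2\tau}{\log\tau}\right)^{N},
\]
which is the size of the error already present in the expansion of $\log\Phi(\sigma,\tau)$ furnished by Corollary~\ref{cor:1.2}; then substituting that expansion finishes the proof. For the term $1/\log q$: since $\tau\le c(\sigma)(\log q)^{1-\sigma}(\log_2 q)^{-1}$ we have $\log q\ge (\tau/c(\sigma))^{1/(1-\sigma)}(\log_2 q)^{1/(1-\sigma)}\gg \tau^{1/(1-\sigma)}$, so $1/\log q\ll\tau^{-1/(1-\sigma)}$, which is far smaller than $\tau^{1/(1-\sigma)}(\log\tau)^{\sigma/(1-\sigma)}(\log_2\tau/\log\tau)^{N}$ for $\tau$ large. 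For the term $(\tau\log\tau)^{\sigma/(1-\sigma)}(\log q)^{-\sigma}$: using $(\log q)^{-\sigma}\le (c(\sigma))^{\sigma/(1-\sigma)}\tau^{-\sigma/(1-\sigma)}(\log_2 q)^{\sigma/(1-\sigma)}$ and $\log_2 q\ll\log\tau$ in the admissible range, one gets this term is $\ll\tau^{\sigma/(1-\sigma)-\sigma/(1-\sigma)}(\log\tau)^{\sigma/(1-\sigma)}(\log\tau)^{\sigma/(1-\sigma)}\cdot(\text{const})$; a direct comparison of powers of $\tau$ and of $\log\tau$ shows this is $o\big(\tau^{1/(1-\sigma)}(\log\tau)^{\sigma/(1-\sigma)}(\log_2\tau/\log\tau)^{N}\big)$, since the power of $\tau$ on the main quantity, $1/(1-\sigma)$, strictly exceeds the power $0$ of $\tau$ here. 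Hence both discrepancy terms are absorbed, and plugging in Corollary~\ref{cor:1.2} with the same $N$ yields the claimed formula, with a possibly further reduced $c(\sigma)$ to ensure all the largeness conditions on $\tau$ hold throughout.
\end{proof}

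The main (and essentially only) obstacle is bookkeeping: one must verify that the two error contributions from Theorem~\ref{thm:1.4}~$(\mathrm{i})$ — the power-saving $(\log q)^{-B}$ term and the term $(\tau\log\tau)^{\sigma/(1-\sigma)}(\log q)^{-\sigma}$ — are, after taking logarithms, genuinely of smaller order than the intrinsic error $\tau^{1/(1-\sigma)}(\log_2\tau/\log\tau)^N(\log\tau)^{\sigma/(1-\sigma)}$ in Corollary~\ref{cor:1.2}, uniformly throughout the range $\tau\le c(\sigma)(\log q)^{1-\sigma}(\log_2 q)^{-1}$; the key point enabling this is that in that range $\log q$ is forced to be at least a power of $\tau$, so $(\log q)^{-1}$ beats every negative power of $\tau$, while the second term has a strictly smaller power of $\tau$ than the main quantity. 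Choosing $c(\sigma)$ small enough at the end to guarantee $\tau$ is large in absolute terms (so that Corollary~\ref{cor:1.2} is applicable) completes the argument. No new analytic input beyond the cited results is needed.
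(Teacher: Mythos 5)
Your approach is exactly the paper's: combine Theorem~\ref{thm:1.4}~$(\mathrm{i})$ with Corollary~\ref{cor:1.2}, take logarithms, and absorb the discrepancy error into the $O((\log_2\tau/\log\tau)^N)$ term inside the braces. The substance is correct and the final comparison is right, but two intermediate claims are slightly off: from $\tau\le c(\sigma)(\log q)^{1-\sigma}(\log_2 q)^{-1}$ one gets $(\log q)^{-\sigma}\le c(\sigma)^{\sigma/(1-\sigma)}(\tau\log_2 q)^{-\sigma/(1-\sigma)}$, so the power of $\log_2 q$ should be \emph{negative}, not positive; and the inequality ``$\log_2 q\ll\log\tau$'' is reversed — in this range $\log\tau\le(1-\sigma)\log_2 q+O(1)$, so $\log\tau\ll\log_2 q$. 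Neither slip damages the conclusion, since, as you correctly observe, after dividing by the main factor $\tau^{1/(1-\sigma)}(\log\tau)^{\sigma/(1-\sigma)}$ the second error term contributes $\ll\tau^{-1}(\log q)^{-\sigma}\le\tau^{-1}$, and $\tau^{-1}\ll(\log_2\tau/\log\tau)^N$ trivially for $\tau$ large; and $1/\log q\ll\tau^{-1/(1-\sigma)}\ll(\log_2\tau/\log\tau)^N$ similarly. So the proof stands, modulo those two cosmetic corrections.
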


In addition to \eqref{eq:02151442}, Lamzouri \cite{Lamzouri2010} proved the asymptotic formula
\begin{gather}\label{eq:02231541}
\widetilde{\Phi}_q(1,\tau)
=\exp\left(-\frac{e^{t-A}}{t}
\left(1+O\left(\frac{1}{\sqrt{t}}\right)\right) \right)
\end{gather}
uniformly in the range $1\ll t \leq \log_2{q}-\log_3{q}-2\log_4{q}$, where the notation is the same as above. 
Using Corollary \ref{cor:1.3} and Theorem \ref{thm:1.4} $(\mathrm{ii})$, we deduce an asymptotic formula for $\Phi_q(1,\tau)$ in a similar range of $t$. 

\begin{corollary}\label{cor:1.6}
Let $N \in \mathbb{Z}_{\geq1}$ and put $\tau=2\log{t}+2\gamma$. 
Then there exists a positive constant $c$ such that 
\begin{gather*}
\Phi_q(1,\tau)
=\exp\left(-\frac{e^{t-A}}{t}
\left\{ \sum_{n=0}^{N-1} \frac{a_n}{t^n}
+O\left(\frac{1}{t^N}\right) \right\}\right)
\end{gather*}
holds uniformly in the range $1\ll t \leq \log_2{q}-\log_3{q}-\log_4{q}-c$, where $A$ and $a_n$ are as in Corollary \ref{cor:1.3}. 
Here, the implied constant depends on $N$. 
\end{corollary}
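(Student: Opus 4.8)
The plan is to obtain Corollary~\ref{cor:1.6} by feeding the limiting expansion of Corollary~\ref{cor:1.3} into the comparison estimate of Theorem~\ref{thm:1.4}~$(\mathrm{ii})$, with $\tau = 2\log t + 2\gamma$ throughout. Fix $N \in \mathbb{Z}_{\geq1}$. First I would apply Theorem~\ref{thm:1.4}~$(\mathrm{ii})$ with, say, $B = 1$; this produces a constant $c(1)$ and, in the range $1 \ll t \leq \log_2 q - \log_3 q - \log_4 q - c(1)$, the identity
\begin{gather*}
\Phi_q(1,\tau) = \Phi(1,\tau)\bigl(1 + E\bigr),
\qquad
E = O\!\left(\frac{1}{\log q} + \frac{e^{t}\,\log_2 q\,\log_3 q}{\log q}\right).
\end{gather*}
The choice of $B$ is immaterial here: among the two error terms only the first and the range constant $c(B)$ depend on $B$, and the first term never exceeds $(\log q)^{-1}$.

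Next I would estimate $E$ uniformly over the admissible range. There one has $t \leq \log_2 q - \log_3 q - \log_4 q - c(1)$, hence
\begin{gather*}
e^{t} \leq \exp\bigl(\log_2 q - \log_3 q - \log_4 q - c(1)\bigr)
= \frac{e^{-c(1)}\log q}{\log_2 q\,\log_3 q},
\end{gather*}
so the second term of $E$ is $O(e^{-c(1)})$ and therefore $E = O(1)$ throughout the range. After enlarging the constant $c$ occurring in the statement — which, since the asserted range is then nonempty only for $q$ with $\log_2 q$ large, discards finitely many $q$ — one may assume moreover that $E \leq \tfrac12$, so that $\Phi_q(1,\tau) > 0$ and $\log(1+E) = O(E)$. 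Because $e^{t}/t^{N+1}$ is bounded below by a positive constant depending only on $N$, the bound $E = O(1)$ upgrades to $O(E) = O\bigl(e^{t}/t^{N+1}\bigr)$, and thus
\begin{gather*}
\log \Phi_q(1,\tau) = \log \Phi(1,\tau) + O\!\left(\frac{e^{t}}{t^{N+1}}\right).
\end{gather*}

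Finally I would insert Corollary~\ref{cor:1.3}, which gives
\begin{gather*}
\log \Phi(1,\tau)
= -\frac{e^{t-A}}{t}\left\{\sum_{n=0}^{N-1}\frac{a_n}{t^n} + O\!\left(\frac{1}{t^N}\right)\right\}
\end{gather*}
for $t$ large, and note that the additional error satisfies $O(e^{t}/t^{N+1}) = O(e^{t-A}/t^{N+1})$, which is precisely of the size of the term $-\tfrac{e^{t-A}}{t}\,O(t^{-N})$ already present; it is therefore absorbed into it. Exponentiating yields the claimed expansion, with $A$ and the coefficients $a_n$ — in particular $a_0 = 1$ and $a_1 = -\tfrac18\mathfrak{a}_0^2 + \tfrac12\mathfrak{a}_1 + \tfrac12$ — inherited verbatim from Corollary~\ref{cor:1.3}, and with implied constant depending only on $N$. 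I expect no genuine analytic difficulty in this argument: everything substantive is already packaged in Corollary~\ref{cor:1.3} and Theorem~\ref{thm:1.4}~$(\mathrm{ii})$, and the only delicate point is the elementary bookkeeping of the last two steps — namely, checking that the comparison error $E$ never exceeds the intrinsic error of the limiting expansion, which is exactly why the admissible range in Corollary~\ref{cor:1.6} is essentially the same as, and no larger than, the one in Theorem~\ref{thm:1.4}~$(\mathrm{ii})$.
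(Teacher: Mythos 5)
Your proposal is correct and follows essentially the same route as the paper, namely feeding the expansion of Corollary~\ref{cor:1.3} into Theorem~\ref{thm:1.4}~$(\mathrm{ii})$ and absorbing the comparison error into the tail $O(t^{-N})$. The paper in fact only writes out this argument for Corollary~\ref{cor:1.5} (the case $1/2<\sigma<1$) and then remarks that Corollary~\ref{cor:1.6} ``can be proved similarly,'' so your explicit bookkeeping for $\sigma=1$ — in particular the observation that $E=O(1)$ throughout the admissible range and $e^{t}/t^{N+1}\gg_N 1$, whence $\log(1+E)=O_N(e^{t-A}/t^{N+1})$ is swallowed by the existing error term — is precisely the routine verification the paper leaves to the reader.
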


\begin{xremark}
\begin{itemize}
\item[$(\mathrm{i})$]
For $\sigma>1/2$ and $\tau \in \mathbb{R}$, we define another distribution function
\begin{gather*}
\Psi_q(\sigma,\tau)
=\frac{\# \left\{f \in B_2(q) ~\middle|~ \log{L}(\sigma,f)<-\tau\right\}}{\# B_2(q)}. 
\end{gather*}
Then the limiting distribution function $\Psi(\sigma,\tau):=\lim_{q \to\infty} \Psi_q(\sigma,\tau)$ satisfies 
\begin{gather*}
\Psi(\sigma,\tau)
=\int_{-\infty}^{-\tau} \mathcal{M}_\sigma(x) \,|dx|
=\int_{-\infty}^{0} \mathcal{M}_\sigma(-\tau+x) \,|dx|. 
\end{gather*}
For $1/2<\sigma \leq1$, one can prove a formula for $\mathcal{M}_\sigma(-\tau+x)$ similar to \eqref{eq:02152104} by replacing $F_\sigma(\kappa)$ and $f_\sigma(\kappa)$ of \eqref{eq:02261551} with $F_\sigma(-\kappa)$ and $f_\sigma(-\kappa)$, respectively. 
We see that Corollaries \ref{cor:1.2} and \ref{cor:1.3} remain true for $\Psi(\sigma,\tau)$. 
Furthermore, the method for the proof of Theorem \ref{thm:1.4} is available to compare $\Psi_q(\sigma,\tau)$ with $\Psi(\sigma,\tau)$. 
As a result, one can prove that $\Psi_q(\sigma,\tau)$ satisfies the same asymptotic formulas as $\Phi_q(\sigma,\tau)$ described in Corollaries \ref{cor:1.5} and \ref{cor:1.6}. 
\item[$(\mathrm{ii})$]
Liu--Royer--Wu also showed a result similar to Theorem \ref{thm:1.4} $(\mathrm{ii})$ in a family of cusp forms of weight $k \geq12$ and level $1$ as $k \to\infty$. 
If we adopt totally the same method for the purpose of comparing $\Phi_q(1,\tau)$ with $\Phi(1,\tau)$, then the admissible range of $t$ is to be obtained as $1\ll t \leq T(q)$, where
\begin{gather*}
T(q)
=\log_2{q}-\frac{5}{2}\log_3{q}-\log_4{q}-c
\end{gather*}
with a constant $c>0$; see \cite[Theorem 2]{LiuRoyerWu2008}. 
However, this is narrower than Lamzouri's range $1\ll t \leq \log_2{q}-\log_3{q}-2\log_4{q}$. 
In this paper, we present a modified method of comparing $\Phi_q(\sigma,\tau)$ with $\Phi(\sigma,\tau)$ for $1/2<\sigma \leq1$ so as to fill this gap. 
\end{itemize}
\end{xremark}

\subsection{Related results for other zeta and $L$-functions}\label{sec:1.2}
The value-distributions of zeta and $L$-functions of degree one are classical topics in analytic number theory. 
For the Riemann zeta-function $\zeta(s)$, we define the distribution function
\begin{gather*}
\Phi_{1,T}(\sigma,\tau)
=\frac{1}{T} \meas \left\{t \in [0,T] ~\middle|~ \log{|\zeta(\sigma+it)|}>\tau\right\}, 
\end{gather*}
where $\meas(S)$ is the Lebesgue measure of a set $S \subset \mathbb{R}$. 
For the Dirichlet $L$-function attached to the quadratic character $\chi_d(n)=(\frac{d}{n})$, we also define 
\begin{gather*}
\Phi_{2,x}(\sigma,\tau)
=\Bigg(\sideset{}{^\flat}\sum_{|d|\leq x} 1\Bigg)^{-1} 
\sideset{}{^\flat}\sum_{\substack{|d|\leq x \\ \log{L}(\sigma,\chi_d)>\tau}} 1, 
\end{gather*}
where $\sideset{}{^\flat}\sum$ indicates the sum over fundamental discriminants. 
We explore several results of $\Phi_{1,T}(\sigma,\tau)$ and $\Phi_{2,x}(\sigma,\tau)$ for comparisons with the results described in Section \ref{sec:1.1}. 
First, we note that there exist the limiting distribution functions 
\begin{gather*}
\Phi_1(\sigma,\tau)
=\lim_{T \to\infty} \Phi_{1,T}(\sigma,\tau) 
\quad\text{and}\quad
\Phi_2(\sigma,\tau)
=\lim_{x \to\infty} \Phi_{2,x}(\sigma,\tau) 
\end{gather*}
for $\sigma>1/2$ and $\tau \in \mathbb{R}$, which were essentially proved by Bohr--Jessen \cite{BohrJessen1930, BohrJessen1932} for $\Phi_1(\sigma,\tau)$ and by Chowla--Erd\H{o}s \cite{ChowlaErdos1951} for $\Phi_2(\sigma,\tau)$. 
The estimates of these distribution functions were improved along with the work of applying methods of probability theory to problems of number theory. 
In particular, Granville--Soundararajan \cite{GranvilleSoundararajan2003, GranvilleSoundararajan2006} applied the saddle-point method to derive the formulas
\begin{gather}\label{eq:02241049}
\Phi_j(1,\tau)
=\exp\left(-\frac{e^{t-A_j}}{t}
\left(1+O\left(\frac{1}{\sqrt{t}}\right)\right) \right)
\end{gather}
for $j=1,2$, where we put $\tau=\log{t}+\gamma$, and $A_j$ are the constants determined as follows. 
Define $g_1(u)=\log{I}_0(u)$ and $g_2(u)=\log\cosh(u)$ for $u>0$, and put 
\begin{gather*}
g_{j,*}(u)
=
\begin{cases}
g_j(u)
& \text{if $0<u \leq1$}, 
\\
g_j(u)-u
& \text{if $u>1$} 
\end{cases}
\end{gather*}
similarly to \eqref{eq:02240037}. 
By these functions, the constants $A_j$ are represented as
\begin{gather*}
A_j
=1+\int_{0}^{\infty} g_{j,*}(y^{-1}) \,dy. 
\end{gather*}
Then, Wu \cite{Wu2007} improved formula \eqref{eq:02241049} in the form
\begin{gather*}
\Phi_2(1,\tau)
=\exp\left(-\frac{e^{t-A_2}}{t}
\left\{\sum_{n=0}^{N-1} \frac{\alpha_n}{t^n}
+O\left(\frac{1}{t^N}\right)\right\} \right), 
\end{gather*}
where $\alpha_0,\ldots,\alpha_{N-1}$ are real numbers such that $\alpha_0=1$. 
Wu's method was also based on the saddle-point method, but the treatment of the saddle-point was slightly different from Granville--Soundararajan's one. 
We prove Theorem \ref{thm:1.1} by modifying the method of \cite{Wu2007} rather than \cite{GranvilleSoundararajan2003, GranvilleSoundararajan2006}. 
Furthermore, Theorem \ref{thm:1.4} is regarded as an analogue of the formula
\begin{gather}\label{eq:02241159}
\Phi_{2,x}(1,\tau)
=\Phi_2(1,\tau)
\left(1+O\left(\frac{1}{(\log{x})^5}
+\frac{e^t}{(\log{x}\log_3{x})(\log_2{x})^{-2}}\right)\right), 
\end{gather}
which was shown in \cite{GranvilleSoundararajan2003} uniformly in the range $1\ll t \leq \log_2{x}-2\log_3{x}+\log_4{q}-20$. 
Asymptotic behaviors of $\Phi_j(\sigma,\tau)$ for $1/2<\sigma<1$ were studied by Lamzouri \cite{Lamzouri2011b}. 
Let $A_j(\sigma)$ be the positive constants determined by 
\begin{gather*}
A_j(\sigma)
=(1-\sigma)\left(\frac{1-\sigma}{\sigma}
\int_{0}^{\infty} g_j(y^{-\sigma}) \,dy\right)^{-\frac{\sigma}{1-\sigma}}. 
\end{gather*}
Then he proved the asymptotic formulas
\begin{gather*}
\Phi_j(\sigma,\tau)
=\exp\left(-A_j(\sigma) \tau^{\frac{1}{1-\sigma}} (\log{\tau})^{\frac{\sigma}{1-\sigma}}
\left(1+O\left(\frac{1}{\sqrt{\log{\tau}}}\right) \right)\right)
\end{gather*}
for $j=1,2$. 
Note that a similar result for $j=1$ was seen in the earlier work of Hattori--Matsumoto \cite{HattoriMatsumoto1999}. 
Finally, an analogue of \eqref{eq:02241159} for $\Phi_{1,T}(\sigma,\tau)$ was achieved by Lamzouri--Lester--Radziwi{\l\l} \cite[Theorem 1.3]{LamzouriLesterRadziwill2019}. 
See also \cite{EndoInoueMine2020+} for a refinement.

\subsection*{Organization of the paper}
This paper consists of five sections. 
\begin{itemize}
\item
Section \ref{sec:2} is devoted to show some lemmas on certain functions $G_p(z)$ and $G(z)$ defined by the $p$-adic Plancherel measure and the Sato--Tate measure. 
\item
In Section \ref{sec:3}, we prove asymptotic formulas for the function $f_\sigma(\kappa)$ of \eqref{eq:02261551}. 
The main ingredient is to sum up the local components $f_{\sigma,p}(\kappa)$ which are approximated by using the functions $G_p(z)$ and $G(z)$ of Section \ref{sec:2}. 
\item
Section \ref{sec:4} is further divided into three subsections. 
In Section \ref{sec:4.1}, we make preparations for the saddle-point method. 
Then we complete the proof of Theorem \ref{thm:1.1} in Section \ref{sec:4.2}. 
After that, we show corollaries of Theorem \ref{thm:1.1} in Section \ref{sec:4.3}. 
\item
In Section \ref{sec:5}, we present a method of comparing $\Phi_q(\sigma,\tau)$ with $\Phi(\sigma,\tau)$, which is based on the Esseen inequality of probability theory. 
We finally prove Theorem \ref{thm:1.4} and its corollaries to end this paper. 
\end{itemize}

\subsection*{Acknowledgment}
The author would like to thank Kenta Endo and Sh\={o}ta Inoue for helpful discussions. 
The work of this paper was supported by Grant-in-Aid for JSPS Fellows (Grant Number JP21J00529).

\section{Preliminary lemmas}\label{sec:2}
Let $p$ be a prime number. 
Denote by $\mu_p$ the $p$-adic Plancherel measure on the interval $[0,\pi]$ defined as 
\begin{gather}\label{eq:02271312}
d \mu_p(\theta)
=\left(1+\frac{1}{p}\right) \left(1-\frac{2\cos2\theta}{p}+\frac{1}{p^2}\right)^{-1}
\frac{2}{\pi} \sin^2\theta \,d \theta. 
\end{gather}
Then $\mu_p$ converges weakly to the Sato--Tate measure $\mu_\infty$ as $p \to\infty$. 
In this section, we show some preliminary lemmas on the functions 
\begin{gather}\label{eq:02251719}
G_p(z)
=\int_{0}^{\pi} \exp(2z \cos \theta) \,d \mu_p(\theta) 
\quad\text{and}\quad
G(z)
=\int_{0}^{\pi} \exp(2z \cos \theta) \,d \mu_\infty(\theta) 
\end{gather}
defined for $z=u+iv \in \mathbb{C}$. 
Note that $G(z)$ is an even entire function represented as 
\begin{align}\label{eq:03012035}
G(z)
&=\frac{1}{\pi} \int_{0}^{\pi} \exp(2z \cos \theta) (1-\cos2\theta) \,d \theta \\
&=I_0(2z)-I_2(2z)
=\frac{I_1(2z)}{z} \nonumber
\end{align}
by definition. 
Thus, we deduce the following properties of $G(z)$ from the results of the Bessel functions; see \cite{Watson1995}. 

\begin{lemma}\label{lem:2.1}
\begin{itemize}
\item[$(\mathrm{i})$]
All zeros of $G(z)$ lie on the imaginary axis. 
In particular, the ordinate of the first zero on the upper half-plane is $7.66\ldots$. 
\item[$(\mathrm{ii})$]
Let $z \in \mathbb{C}$ with $\RE(z)>0$. 
Then we have
\begin{gather*}
G(z)
=\frac{1}{\sqrt{4\pi}} \frac{\exp(2z)}{z^{3/2}}
\left(1+O\left(|z|^{-1}\right)\right)
\end{gather*}
as $|z|\to \infty$. 
Here, the branch of $z^{3/2}$ is chosen so that it is real-valued on the positive real axis. 
\item[$(\mathrm{iii})$]
We have $G(z)=1+\frac{1}{2}z^2+O(|z|^4)$ as $|z|\to0$. 
\end{itemize}
\end{lemma}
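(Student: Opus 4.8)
The plan is to read off all three statements from classical properties of Bessel functions, using the closed form $G(z)=I_1(2z)/z=I_0(2z)-I_2(2z)$ already recorded in \eqref{eq:03012035}; no new input is required beyond \cite{Watson1995}. For part~(i), I would start from the relation $I_1(w)=-iJ_1(iw)$ together with the classical theorem that every zero of $J_\nu$ with $\nu>-1$ is real. This forces the nonzero zeros of $I_1(w)$ to be exactly $w=\pm i j_{1,k}$ for $k\ge1$, where $0<j_{1,1}<j_{1,2}<\cdots$ denote the positive zeros of $J_1$. Since the simple zero of $I_1$ at $w=0$ is cancelled by the factor $1/z$, the function $G$ is entire with $G(0)=1$, and its zero set is $\{\pm i j_{1,k}/2\}_{k\ge1}$; in particular all zeros lie on the imaginary axis, the first ordinate being determined by the tabulated value of $j_{1,1}$, and the evenness of $G$ accounts for the symmetric zero in the lower half-plane.

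For part~(ii), I would invoke the standard large-argument asymptotic $I_\nu(w)=(2\pi w)^{-1/2}e^{w}(1+O(|w|^{-1}))$, valid uniformly in any fixed sector $|\arg w|\le\pi/2-\delta$, specialise to $\nu=1$ and $w=2z$, and divide by $z$; writing $(2\pi w)^{1/2}=\sqrt{4\pi}\,z^{1/2}$ with $z^{1/2}$ (hence $z^{3/2}$) taken positive on the positive real axis gives exactly $G(z)=(4\pi)^{-1/2}e^{2z}z^{-3/2}(1+O(|z|^{-1}))$. For part~(iii), I would substitute the Maclaurin series $I_1(w)=\tfrac{w}{2}\sum_{k\ge0}(w/2)^{2k}/(k!\,(k+1)!)$ with $w=2z$ and divide by $z$, obtaining $G(z)=1+\tfrac12 z^{2}+O(|z|^{4})$; equivalently one may expand $e^{2z\cos\theta}$ inside $\int_0^{\pi}e^{2z\cos\theta}\,d\mu_\infty(\theta)$ and use $\int_0^{\pi}\cos\theta\,d\mu_\infty(\theta)=0$, $\int_0^{\pi}\cos^{2}\theta\,d\mu_\infty(\theta)=\tfrac14$, and the vanishing of the odd cosine-moments of $\mu_\infty$.

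There is no substantive obstacle here: the lemma is a bookkeeping step that collects the facts about $G(z)=I_1(2z)/z$ that are needed later. The only points that call for a little care are fixing the branch of $z^{3/2}$ in~(ii), together with the observation that the error term there is genuinely uniform only while $\arg z$ stays in a fixed compact subinterval of $(-\pi/2,\pi/2)$ — which is all that is used in Section~\ref{sec:3}, where $z$ is essentially real — and quoting the correct first positive zero of $J_1$ in~(i).
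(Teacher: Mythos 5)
Your argument is correct and is essentially what the paper intends: the paper offers no proof of Lemma~\ref{lem:2.1} beyond citing \cite{Watson1995}, and you have simply supplied the missing Bessel-function facts. The reduction of (i) to the reality of the zeros of $J_1$ via $I_1(w)=-iJ_1(iw)$, the sectorial large-$|w|$ asymptotic for $I_\nu$ specialised to $\nu=1,\ w=2z$ in (ii), and the Maclaurin expansion (or equivalently the moment computation $\int\cos\theta\,d\mu_\infty=0$, $\int\cos^2\theta\,d\mu_\infty=\tfrac14$) in (iii) are exactly the right ingredients, and your remark that the $O(|z|^{-1})$ bound in (ii) is only uniform on sectors $|\arg z|\le\pi/2-\delta$ is a useful clarification, since the paper only applies it with $|\arg z|\le\pi/4$ (Lemma~\ref{lem:2.2}) or $z$ real (Section~\ref{sec:3}).

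One thing you should have flagged: carrying your computation to the end gives the zero set $\{\pm i\,j_{1,k}/2\}_{k\ge1}$, and with $j_{1,1}=3.8317\ldots$ the first ordinate is $j_{1,1}/2=1.915\ldots$, \emph{not} $7.66\ldots$ as stated in the lemma. The value $7.66\ldots$ is $2j_{1,1}$, so the paper's numerical claim in (i) appears to contain a slip (perhaps a stray factor of $4$). This does not affect anything downstream -- the only use of part (i) is the qualitative assertion, made just before Lemma~\ref{lem:2.3}, that $G$ has no zeros in the open right half-plane, so that $g=\log G$ is well defined there -- but since your route determines the number explicitly, the discrepancy with the text should be noted rather than glossed over with ``the tabulated value of $j_{1,1}$.''
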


With regard to the function $G_p(z)$, there seems to be no simple representations by the Bessel functions such as \eqref{eq:03012035}. 
On the other hand, we know that $G_p(z) \to G(z)$ uniformly as $p \to\infty$. 
More precisely, we prove the following result. 

\begin{lemma}\label{lem:2.2}
Let $z=u+iv$ with $u>0$ and $|v|\leq u$. 
Then we have uniformly
\begin{gather*}
G_p(z)
=G(z)
\left(1+O\left(p^{-1}\right)\right)
\end{gather*}
for every prime number $p$.  
\end{lemma}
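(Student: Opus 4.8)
The plan is to compare the defining integrals of $G_p(z)$ and $G(z)$ in \eqref{eq:02251719} by controlling the difference of the measures $\mu_p$ and $\mu_\infty$ pointwise in $\theta$, and then to exploit the explicit lower bound for $|G(z)|$ afforded by parts $(\mathrm{ii})$ and $(\mathrm{iii})$ of Lemma \ref{lem:2.1} in the sector $u>0$, $|v|\leq u$. First I would write, from \eqref{eq:02271312} and the definition of $\mu_\infty$ (whose density is $\frac{2}{\pi}\sin^2\theta = \frac{1}{\pi}(1-\cos2\theta)$),
\begin{gather*}
d\mu_p(\theta)-d\mu_\infty(\theta)
=\left(\left(1+\frac1p\right)\left(1-\frac{2\cos2\theta}{p}+\frac1{p^2}\right)^{-1}-1\right)\frac{2}{\pi}\sin^2\theta\,d\theta.
\end{gather*}
A direct expansion shows the bracketed factor is $\frac{1}{p}\bigl(1+2\cos2\theta\bigr)+O(p^{-2})$, hence $=O(p^{-1})$ uniformly in $\theta\in[0,\pi]$; write it as $p^{-1}h_p(\theta)$ with $h_p$ bounded independently of $p$. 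Consequently
\begin{gather*}
G_p(z)-G(z)
=\frac1p\int_{0}^{\pi}\exp(2z\cos\theta)\,h_p(\theta)\,\frac{2}{\pi}\sin^2\theta\,d\theta,
\end{gather*}
and since $|\exp(2z\cos\theta)|=\exp(2u\cos\theta)\le e^{2u}$ for $u>0$, the integral is $O(e^{2u})=O(e^{2\RE(z)})$, uniformly in $p$.

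Next I would turn this absolute bound into the asserted multiplicative one by dividing by $G(z)$. The point is that $|G(z)|$ is never small compared with $e^{2u}|z|^{-3/2}$ in the stated sector. For $|z|$ large with $\RE(z)>0$, Lemma \ref{lem:2.1}$(\mathrm{ii})$ gives $|G(z)|\gg e^{2u}|z|^{-3/2}$ directly. For $|z|$ bounded, I would use that $G$ is entire with $G(0)=1$ (Lemma \ref{lem:2.1}$(\mathrm{iii})$) and, crucially, that all zeros of $G$ lie on the imaginary axis (Lemma \ref{lem:2.1}$(\mathrm{i})$), so $G$ has no zero in the closed sector $\{u\ge 0\}$ except possibly at the origin, which is excluded by $u>0$; hence on any compact subset of $\{u>0,\ |v|\le u\}$ one has $|G(z)|$ bounded below by a positive constant. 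Combining the two regimes, $|G(z)|^{-1}\ll |z|^{3/2}e^{-2u}$ throughout the sector (with the small-$|z|$ contribution of the factor $|z|^{3/2}$ being harmless since it is then bounded). Therefore
\begin{gather*}
\left|\frac{G_p(z)-G(z)}{G(z)}\right|
\ll \frac1p\cdot e^{2u}\cdot |z|^{3/2}e^{-2u}
= \frac{|z|^{3/2}}{p},
\end{gather*}
which is not quite $O(p^{-1})$ because of the stray $|z|^{3/2}$.

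To remove that factor — this is the main obstacle — one must extract the Gaussian-type decay hidden in the oscillatory integral for $G_p-G$ rather than bounding $|\exp(2z\cos\theta)|$ crudely by $e^{2u}$. The idea is to carry out the same Laplace/saddle-point analysis for $\int_0^\pi \exp(2z\cos\theta)h_p(\theta)\frac2\pi\sin^2\theta\,d\theta$ that underlies Lemma \ref{lem:2.1}$(\mathrm{ii})$ for $G$ itself: the phase $\cos\theta$ has its maximum at $\theta=0$, where the weight $\sin^2\theta$ vanishes to second order, producing the $z^{-3/2}$ factor, and $h_p(0)=\frac1p\cdot(\text{the } p\to\infty \text{ value of }p\cdot h_p)$, i.e. $h_p(0)=O(1)$ with a limit $3$. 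The restriction $|v|\le u$ (equivalently $|\arg z|\le \pi/4$) is exactly what keeps the saddle-point contour deformation valid and $\RE(z\cos\theta)$ suitably peaked. Running this, one finds
\begin{gather*}
G_p(z)-G(z)=\frac1p\cdot\frac{1}{\sqrt{4\pi}}\frac{\exp(2z)}{z^{3/2}}\bigl(h_p(0)+O(|z|^{-1})\bigr)
\end{gather*}
for $\RE(z)>0$, $|v|\le u$, $|z|$ large, and dividing by the matching asymptotic for $G(z)$ from Lemma \ref{lem:2.1}$(\mathrm{ii})$ gives $G_p(z)/G(z)=1+O(p^{-1})$ in that range; the compact range $|z|\le R$ is then handled by the crude bound above together with $|G(z)|\gg 1$ there, since on compacta the $|z|^{3/2}$ factor costs only a constant. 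Splicing the two ranges completes the proof. The delicate points to get right are the uniformity of the implied constants in the Laplace expansion as $z$ ranges over the sector and the verification that $h_p$ and its relevant derivatives at $0$ are bounded uniformly in $p$, both of which are elementary but must be done carefully.
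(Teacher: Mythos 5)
Your proposal is correct in spirit but takes a substantially longer route than necessary, and the obstacle you run into is self-inflicted. The crucial observation you drop early on is that you should \emph{not} bound $|\exp(2z\cos\theta)|$ crudely by $e^{2u}$ and discard the $\sin^2\theta$ weight; you should keep the whole integrand. Since the ratio of densities satisfies
\begin{gather*}
\left(1+\tfrac1p\right)\left(1-\tfrac{2\cos2\theta}{p}+\tfrac1{p^2}\right)^{-1}=1+O(p^{-1})
\end{gather*}
uniformly in $\theta\in[0,\pi]$ and uniformly over all primes $p$, the difference of the integrals is bounded by
\begin{gather*}
|G_p(z)-G(z)|
\ll p^{-1}\int_0^\pi \exp(2u\cos\theta)\,\frac{2}{\pi}\sin^2\theta\,d\theta
= p^{-1}\,G(u),
\end{gather*}
because $|\exp(2z\cos\theta)|=\exp(2u\cos\theta)$. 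This is the paper's one-line bound, and it already has the correct size: the stray factor $|z|^{3/2}$ you fought against never appears, because $G(u)$ itself carries the $u^{-3/2}e^{2u}$ asymptotic. Then the observation $|G(z)|\asymp G(u)$ on the sector $u>0$, $|v|\le u$ (for large $|z|$ from Lemma~\ref{lem:2.1}$(\mathrm{ii})$ using $|z|\asymp u$ and $|\exp(2z)|=e^{2u}$; for bounded $|z|$ from Lemma~\ref{lem:2.1}$(\mathrm{i})$, $(\mathrm{iii})$ and continuity, as you correctly note) gives $|G_p(z)-G(z)|/|G(z)|\ll p^{-1}$ at once. Your alternative --- carrying out a full Laplace/saddle-point expansion of $G_p-G$ parallel to that of $G$ and matching leading terms --- would indeed work and is not wrong, but it needs uniform control of $h_p$ and its derivatives at $0$ and uniformity of the expansion over the sector, none of which is required if you simply retain the $\sin^2\theta$ factor. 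In short: no gap in correctness, but the decisive shortcut is that the $\sin^2\theta$ weight already produces $G(u)$ on the nose, making any further asymptotic analysis of the difference unnecessary.
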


\begin{proof}
Since we have
\begin{gather*}
\left(1+\frac{1}{p}\right) \left(1-\frac{2\cos2\theta}{p}+\frac{1}{p^2}\right)^{-1}
=1+O(p^{-1})
\end{gather*}
for all $\theta \in [0,\pi]$, the difference between $G_p(z)$ and $G(z)$ is evaluated as 
\begin{gather*}
G_p(z)-G(z)
\ll p^{-1}
\int_{0}^{\pi} \exp(2u \cos \theta) \frac{2}{\pi} \sin^2\theta \,d \theta
=p^{-1}G(u). 
\end{gather*}
By Lemma \ref{lem:2.1} $(\mathrm{ii})$, we have $|G(z)| \asymp G(u)$ for $z=u+iv$ with $u>0$ and $|v|\leq u$. 
Hence we obtain 
\begin{gather*}
\frac{G_p(z)-G(z)}{G(z)}
\ll p^{-1}, 
\end{gather*}
which yields the desired formula. 
\end{proof}

By Lemma \ref{lem:2.1} $(\mathrm{i})$, one can define $g(z)=\log{G}(z)$ as a holomorphic function on the right half-plane $\RE(z)>0$, where the branch of the logarithm is taken so that $g(u)\in \mathbb{R}$ for $u>0$. 
Then we deduce the following estimates of $g(u)$ and its derivatives from Lemma \ref{lem:2.1} $(\mathrm{ii}), (\mathrm{iii})$. 
\begin{lemma}\label{lem:2.3}
We have 
\begin{align*}
g(u)
&=
\begin{cases}
O(u^2) & \text{if $0<u \leq1$}, 
\\
2u+O(\log{u}) & \text{if $u>1$}, 
\end{cases} \\
g'(u)
&=
\begin{cases}
O(u) & \text{if $0<u \leq1$}, 
\\
2+O(u^{-1}) & \text{if $u>1$}, 
\end{cases}
\end{align*}
and for all $j \geq2$, 
\begin{gather*}
g^{(j)}(u)
=
\begin{cases}
O(j!) & \text{if $0<u \leq1$}, 
\\
O(2^jj!u^{-j}) & \text{if $u>1$}.  
\end{cases}
\end{gather*}
\end{lemma}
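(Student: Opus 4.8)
Since Lemma~\ref{lem:2.1}~$(\mathrm{i})$ says that every zero of $G$ lies on the imaginary axis, $G$ has no zeros in the half-plane $\RE(z)>0$ and none in the disc $|z|<7.66$; together with $G(0)=1$ this makes $g(z)=\log G(z)$ holomorphic on each of these two simply connected regions, normalised so that $g(u)\in\mathbb{R}$ for $u\geq0$. The plan is to read off the behaviour near $u=0$ from the Taylor expansion of Lemma~\ref{lem:2.1}~$(\mathrm{iii})$, and the behaviour for $u>1$ from the asymptotic of Lemma~\ref{lem:2.1}~$(\mathrm{ii})$ through Cauchy's estimates for derivatives. The device that makes the latter part work is the auxiliary function
\[
\phi(z)=g(z)-2z+\tfrac{3}{2}\log z,
\]
holomorphic on $\RE(z)>0$: by Lemma~\ref{lem:2.1}~$(\mathrm{ii})$ one has $\phi(z)=-\tfrac12\log(4\pi)+O(|z|^{-1})$ as $|z|\to\infty$ with $z$ restricted to any fixed sector $|\arg z|\leq\tfrac{\pi}{2}-\delta$, so $\phi$ is continuous on the closed sector $S$ consisting of all $z$ with $|\arg z|\leq\tfrac{\pi}{6}$ and $|z|\geq\tfrac12$, and has a finite limit at infinity there; hence $C_{0}:=\sup_{S}|\phi|<\infty$.

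For $0<u\leq1$ I would argue as follows. By Lemma~\ref{lem:2.1}~$(\mathrm{iii})$, $g(u)=\log\bigl(1+\tfrac12u^{2}+O(u^{4})\bigr)=\tfrac12u^{2}+O(u^{4})$; comparing with the Taylor series of $g$ at $0$ shows the coefficients of $u$ and $u^{3}$ vanish, so $g'(u)=u+O(u^{3})$. Thus $g(u)/u^{2}$ and $g'(u)/u$ extend continuously to $[0,1]$, hence are bounded, which gives $g(u)=O(u^{2})$ and $g'(u)=O(u)$. For $j\geq2$, the circle $|z-u|=1$ is contained in the disc $|z|\leq2$, which in turn lies in the zero-free disc $|z|<7.66$ where $g$ is holomorphic; Cauchy's integral formula for the $j$-th derivative then gives $|g^{(j)}(u)|\leq j!\,\sup_{|z|\leq2}|g|=O(j!)$.

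For $u>1$ the natural contour is the circle $|z-u|=u/2$. Each $z$ in the closed disc it bounds can be written as $uw$ with $|w-1|\leq\tfrac12$, so $|\arg z|\leq\arcsin\tfrac12=\tfrac{\pi}{6}$ and $|z|\geq u/2>\tfrac12$; hence this circle lies inside $S$. Writing $g=\phi+2z-\tfrac32\log z$ and using $(2z)^{(j)}=0$ for $j\geq2$ and $(\log z)^{(j)}=(-1)^{j-1}(j-1)!\,z^{-j}$, we obtain $g'(z)=\phi'(z)+2-\tfrac{3}{2z}$ and $g^{(j)}(z)=\phi^{(j)}(z)-\tfrac32(-1)^{j-1}(j-1)!\,z^{-j}$ for $j\geq2$. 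Cauchy's estimate on $|z-u|=u/2$ gives $|\phi^{(j)}(u)|\leq j!\,(u/2)^{-j}C_{0}=2^{j}j!\,C_{0}u^{-j}$, from which $g'(u)=2-\tfrac{3}{2u}+O(u^{-1})=2+O(u^{-1})$ and, for $j\geq2$, $|g^{(j)}(u)|\leq 2^{j}j!\,C_{0}u^{-j}+\tfrac32(j-1)!\,u^{-j}=O(2^{j}j!\,u^{-j})$. Finally $g(u)=2u-\tfrac32\log u+\phi(u)$ with $\phi(u)$ bounded for $u>1$ (by Lemma~\ref{lem:2.1}~$(\mathrm{ii})$ once $u$ is large, and by continuity on the remaining compact interval), so $g(u)=2u+O(\log u)$.

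I expect the one genuinely delicate point to be securing the correct power of $u$ in the derivative bounds for $u>1$. A bare Cauchy estimate applied to $g$ itself on the circle $|z-u|=u/2$ would cost an extra factor of $u$, because $g$ is of size $\asymp u$ there; subtracting the smooth main terms $2z-\tfrac32\log z$ before differentiating, and then adding back their explicit derivatives --- which already carry the factor $(j-1)!\,u^{-j}$ --- is exactly what removes this loss. A secondary point is to apply Lemma~\ref{lem:2.1}~$(\mathrm{ii})$ only where its implied constant is uniform, which is automatic here because the circle $|z-u|=u/2$ never leaves the sector $|\arg z|\leq\pi/6$; and, as usual, $O(\log u)$ for $u>1$ is understood with a bounded additive term absorbed.
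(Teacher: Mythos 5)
Your proof is correct and follows essentially the same route as the paper: split into the ranges $0<u\leq 1$ and $u>1$, use Lemma~\ref{lem:2.1}~$(\mathrm{iii})$ for the first, and for the second subtract the explicit main terms $2z-\tfrac32\log z$ before applying Cauchy's estimate on the circle $|z-u|=u/2$. The paper takes $h(z)=g(z)-2z+\tfrac32\log z+\tfrac12\log 4\pi$, which is $O(|z|^{-1})$, whereas you take $\phi(z)=g(z)-2z+\tfrac32\log z$, which is only $O(1)$; this costs a factor of $u$ in the Cauchy bound on $\phi^{(j)}$ versus $h^{(j)}$, but is still ample for the stated $O(2^jj!u^{-j})$ since the explicit $(\log z)^{(j)}$ term already contributes $(j-1)!u^{-j}$. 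You are also a little more explicit than the paper about why the Cauchy circle stays in a sector where the asymptotic of Lemma~\ref{lem:2.1}~$(\mathrm{ii})$ is uniform, and about deducing $g'(u)=O(u)$ for small $u$ from the Taylor coefficients; these are harmless elaborations of steps the paper leaves implicit.
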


\begin{proof}
Let $z \in \mathbb{C}$ with $\RE(z)>1/2$. 
Using Lemma \ref{lem:2.1} $(\mathrm{ii})$, we derive the formula
\begin{gather*}
g(z)
=2z-\frac{3}{2}\log{z}-\frac{1}{2}\log4\pi+h(z),  
\end{gather*}
where $h(z)$ is a holomorphic function such that $h(z)\ll |z|^{-1}$. 
It yields the desired estimate of $g(u)$ for $u>1$. 
We also obtain
\begin{gather*}
g^{(j)}(z)
=2\delta_{j,1}
+(j-1)!\frac{3(-1)^j}{2z^j}
+h^{(j)}(z)
\end{gather*}
for all $j \geq1$, where $\delta_{j,1}$ equals to 1 if $j=1$ and 0 otherwise. 
For $u>1$, Cauchy's integral formula yields
\begin{gather*}
h^{(j)}(u)
=\frac{j!}{2\pi i} \int_{|z-u|=u/2} \frac{h(z)}{(z-u)^{j+1}} \,dz 
\ll 2^jj!u^{-j-1}, 
\end{gather*}
and therefore the results on the derivatives follow. 
The results for $0<u \leq1$ can be proved similarly. 
Indeed, we deduce from Lemma \ref{lem:2.1} $(\mathrm{iii})$ that $g(z)=\frac{1}{2}z^2+O(|z|^4)$ for $|z|\leq1$. 
Then the upper bounds of $g(u)$ and $g'(u)$ follow immediately. 
For $j \geq2$, we again use Cauchy's integral formula to obtain 
\begin{gather*}
g^{(j)}(u)
=\frac{j!}{2\pi i} \int_{|z-u|=1} \frac{g(z)}{(z-u)^{j+1}} \,dz
\ll j!
\end{gather*}
by noting that $g(z)$ is bounded on the disk $|z|\leq2$. 
\end{proof}

Let $n$ and $j$ be non-negative integers. 
If $1/2<\sigma<1$, then the integral 
\begin{gather}\label{eq:02261606}
\mathfrak{g}_{n,j}(\sigma)
=\int_{0}^{\infty} \frac{g^{(j)}(u)}{u^{\frac{1}{\sigma}+1-j}} (\log{u})^n \,du
\end{gather}
is finite by Lemma \ref{lem:2.3}. 
If $\sigma=1$, we modify the function $g(u)$ by $g_*(u)$ as in \eqref{eq:02240037}. 
Then we again deduce from Lemma \ref{lem:2.3} that the integral
\begin{gather}\label{eq:02261607}
\mathfrak{g}_{n,j}
=\int_{0}^{\infty} \frac{g_*^{(j)}(u)}{u^{2-j}} (\log{u})^n \,du
\end{gather}
is finite. 
Moreover, it can be easily check that $\mathfrak{g}_{0,j}(\sigma) \ll j!$ and $\mathfrak{g}_{0,j} \ll j!$ by the integrations by parts. 
The constants $\mathfrak{a}_0(\sigma)$, $\mathfrak{a}_1(\sigma)$, $\mathfrak{a}_0$, $\mathfrak{a}_1$ of Corollaries \ref{cor:1.2} and \ref{cor:1.3} are related to these integrals when $n=0,1$.

\section{Estimates of cumulant-generating functions}\label{sec:3}
Let $\Theta=(\Theta_p)_p$ be a sequence of independent random variables distributed on the interval $[0,\pi]$ according to the measure $\mu_p$ of \eqref{eq:02271312}. 
Then we define
\begin{gather}\label{eq:02261940}
L(\sigma, \Theta)
=\prod_{p} \left(1-2(\cos \Theta_p)p^{-\sigma}+p^{-2\sigma}\right)^{-1}. 
\end{gather}
We see that $L(\sigma, \Theta)$ presents an $\mathbb{R}$-valued random variable for $\sigma>1/2$ since the right-hand side of \eqref{eq:02261940} converges almost surely. 
For $\theta \in [0,\pi]$, we also define 
\begin{gather}\label{eq:02262144}
\lambda_{p,\sigma}(\theta)
=\sum_{m=1}^{\infty} \frac{\cos(m \theta)}{m} p^{-m \sigma}. 
\end{gather}
Then we have $\log{L}(\sigma,\Theta)=\sum_{p} 2\lambda_{p,\sigma}(\Theta_p)$ for $\sigma>1/2$. 
The random Euler product $L(\sigma, \Theta)$ is associated with the value-distribution of $L(\sigma,f)$. 
Indeed, the limiting distribution function $\Phi(\sigma,\tau)$ of \eqref{eq:02261944} is represented as 
\begin{gather*}
\Phi(\sigma,\tau)
=\mathbb{P}\left(\log{L}(\sigma,\Theta)>\tau\right) 
\end{gather*}
for $\sigma>1/2$ and $\tau \in \mathbb{R}$, where $\mathbb{P}(E)$ denotes the probability of an event $E$. 
Thus, the $M$-function $\mathcal{M}_\sigma$ is a probability density function of the random variable $\log{L}(\sigma,\Theta)$. 
It was proved in \cite{Mine2020+} that the moment-generating function 
\begin{gather*}
F_\sigma(s)
=\mathbb{E}\left[\exp\left(s \log{L}(\sigma,\Theta)\right)\right]
=\int_{\mathbb{R}} e^{sx} \mathcal{M}_\sigma(x) \,|dx|
\end{gather*}
is defined for all $s=u+iv \in \mathbb{C}$ and has the infinite product representation
\begin{gather}\label{eq:02262021}
F_\sigma(s)
=\prod_{p} F_{\sigma,p}(s), 
\end{gather}
where $F_{\sigma,p}(s)=\mathbb{E}\left[\exp\left(2s \lambda_{p,\sigma}(\Theta_p) \right)\right]$. 
The goal of this section is to show the following formulas of the cumulant-generating function $f_\sigma(\kappa)=\log{F}_\sigma(\kappa)$ and its derivatives.  

\begin{proposition}\label{prop:3.1}
Let $1/2<\sigma<1$ and $N \in \mathbb{Z}_{\geq1}$. 
For all $j \geq0$, we obtain
\begin{gather*}
f_\sigma^{(j)}(\kappa)
=\frac{\kappa^{\frac{1}{\sigma}-j}}{\log{\kappa}}
\left\{\sum_{n=0}^{N-1} \frac{\mathfrak{g}_{n,j}(\sigma)}{(\log{\kappa})^n}
+O\left( \frac{2^jj!}{(\log{\kappa})^N} \right)\right\}
\end{gather*}
if $\kappa>0$ is large enough. 
Here, $\mathfrak{g}_{n,j}(\sigma)$ are defined as \eqref{eq:02261606}, and the implied constants depend on $\sigma$ and $N$.  
\end{proposition}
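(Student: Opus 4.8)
The plan is to work from the Euler factorization $F_\sigma(s)=\prod_p F_{\sigma,p}(s)$ of \eqref{eq:02262021}, so that $f_\sigma(s)=\sum_p f_{\sigma,p}(s)$ with $f_{\sigma,p}(s)=\log F_{\sigma,p}(s)$, and to approximate each local factor. Writing $g=\log G$, the heart of the matter is that for $p$ large the measure $\mu_p$ is close to $\mu_\infty$ and $\lambda_{p,\sigma}(\theta)=p^{-\sigma}\cos\theta+O(p^{-2\sigma})$, so $f_{\sigma,p}(\kappa)$ should be close to $g(\kappa p^{-\sigma})$; then $f_\sigma(\kappa)\approx\sum_p g(\kappa p^{-\sigma})$, and this prime sum is evaluated asymptotically. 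First I would note that, since $\mathbb{E}[\cos\Theta_p]=0$ by the symmetry of $\mu_p$ about $\pi/2$, one has $f_{\sigma,p}(s)=O\bigl((|s|+|s|^2)p^{-2\sigma}\bigr)$ on compacts of a neighbourhood of $\mathbb{R}_{>0}$ where the $F_{\sigma,p}$ are zero-free; as $\sigma>1/2$ the series $\sum_p f_{\sigma,p}(s)$ converges locally uniformly, $f_\sigma$ is holomorphic near $\mathbb{R}_{>0}$, and $f_\sigma^{(j)}(\kappa)=\sum_p f_{\sigma,p}^{(j)}(\kappa)$. Setting $e_p(s)=f_{\sigma,p}(s)-g(sp^{-\sigma})$, it then suffices to prove that $\sum_p e_p^{(j)}(\kappa)$ is negligible and that $\sum_p\frac{d^j}{d\kappa^j}g(\kappa p^{-\sigma})=\sum_p p^{-j\sigma}g^{(j)}(\kappa p^{-\sigma})$ has the asserted expansion.

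For the error, separating $2\lambda_{p,\sigma}(\theta)=2p^{-\sigma}\cos\theta+O(p^{-2\sigma})$ uniformly in $\theta$ gives $F_{\sigma,p}(s)=G_p(sp^{-\sigma})\bigl(1+O(|s|p^{-2\sigma})\bigr)$ when $|\mathrm{Im}\,s|\le\mathrm{Re}\,s$ and $|s|p^{-2\sigma}$ is small, while Lemma~\ref{lem:2.2} with $z=sp^{-\sigma}$, together with Lemma~\ref{lem:2.1}, gives $G_p(z)\neq0$ and $\log G_p(z)-g(z)\ll p^{-1}$, sharpened to $\ll\mathrm{Re}(z)\,p^{-1}$ for $\mathrm{Re}(z)\le1$ by subtracting the common mass of $\mu_p$ and $\mu_\infty$ first. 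On the disk $|s-\kappa|\le\kappa/2$ this is valid for $p\gg\kappa^{1/(2\sigma)}$, where moreover $F_{\sigma,p}\neq0$, so Cauchy's formula yields $e_p^{(j)}(\kappa)\ll 2^jj!\,\kappa^{1-j}(p^{-2\sigma}+p^{-\sigma-1})$; the sum over such $p$ is $O\bigl(2^jj!\,(\kappa^{1/(2\sigma)-j}+\kappa^{1/2-j})/\log\kappa\bigr)$, which is $\ll 2^jj!\,\kappa^{1/\sigma-j}/(\log\kappa)^{N+1}$ since $\tfrac1{2\sigma}<\tfrac1\sigma$ and $\tfrac12<\tfrac1\sigma$ on $(\tfrac12,1)$. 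For the remaining primes $p\ll\kappa^{1/(2\sigma)}$, where $\kappa p^{-\sigma}$ is large, I would use the Laplace expansion of $F_{\sigma,p}(\kappa)$ at the nondegenerate maximum of the real-analytic phase $\lambda_{p,\sigma}$ at $\theta=0$: by Lemma~\ref{lem:2.1}$(\mathrm{ii})$ its universal constants cancel against those of $g(\kappa p^{-\sigma})$, leaving $e_p(\kappa)\ll\kappa p^{-2\sigma}+1$, and the expansion can be differentiated term by term, so that these primes contribute $\ll_j\kappa^{1-j}$ to $\sum_p e_p^{(j)}(\kappa)$, again negligible next to $\kappa^{1/\sigma-j}$.

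It remains to evaluate $\sum_p p^{-j\sigma}g^{(j)}(\kappa p^{-\sigma})$. By partial summation and the prime number theorem with classical error term this equals $\int_2^{\infty}x^{-j\sigma}g^{(j)}(\kappa x^{-\sigma})\,\frac{dx}{\log x}$ up to an error that, estimated via Lemma~\ref{lem:2.3}, is much smaller than the main term. The substitution $u=\kappa x^{-\sigma}$ turns this integral into
\begin{gather*}
\kappa^{1/\sigma-j}\int_0^{\kappa 2^{-\sigma}}\frac{g^{(j)}(u)}{u^{1/\sigma+1-j}}\,\frac{du}{\log\kappa-\log u},
\end{gather*}
where $u<\kappa$ keeps the denominator $\ge\sigma\log 2$. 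Expanding $(\log\kappa-\log u)^{-1}=(\log\kappa)^{-1}\sum_{n=0}^{N-1}(\log u/\log\kappa)^n$ plus a remainder, extending the $u$-range to $(0,\infty)$, and recognizing $\int_0^\infty g^{(j)}(u)\,u^{\,j-1-1/\sigma}(\log u)^n\,du=\mathfrak{g}_{n,j}(\sigma)$ of \eqref{eq:02261606}, one obtains the main terms $\sum_{n<N}\mathfrak{g}_{n,j}(\sigma)(\log\kappa)^{-n-1}$. The tails of the extension and the remainder integral are each $\ll 2^jj!\,(\log\kappa)^{-N-1}$, by the finiteness of the $\mathfrak{g}_{n,j}(\sigma)$, the bounds of Lemma~\ref{lem:2.3} (from which the factor $2^jj!$ descends, via $g^{(j)}(u)\ll 2^jj!\,u^{-j}$ for $u>1$), and the elementary fact that a fixed negative power of $\kappa$ dominates every power of $1/\log\kappa$. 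Multiplying through by $\kappa^{1/\sigma-j}$ and collecting the estimates gives the proposition.

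The step I expect to be the main obstacle is making the local approximation $f_{\sigma,p}(s)\approx g(sp^{-\sigma})$ robust under $j$-fold differentiation with the precise weight $2^jj!$. This forces one to pass to complex $s$ on disks about $\kappa$ in order to apply Cauchy's estimates, hence to control the zeros of $F_{\sigma,p}$ there, and in particular to deal separately with the primes $p\ll\kappa^{1/(2\sigma)}$, for which $\kappa p^{-\sigma}$ is large and $F_{\sigma,p}$ is described by a saddle point rather than a convergent power series — there one must differentiate the Laplace asymptotic expansion term by term and check that its universal constants match those furnished by Lemma~\ref{lem:2.1}$(\mathrm{ii})$. A secondary point requiring care is the bookkeeping of the several remainder terms produced by the expansion of $(\log\kappa-\log u)^{-1}$, each of which must be verified to lie below $2^jj!\,(\log\kappa)^{-N-1}\kappa^{1/\sigma-j}$.
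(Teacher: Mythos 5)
Your proposal is correct and follows the same overall strategy as the paper: decompose $f_\sigma^{(j)}(\kappa)=\sum_p f_{\sigma,p}^{(j)}(\kappa)$, approximate $f_{\sigma,p}$ by $g(sp^{-\sigma})$ for larger primes and by the Laplace/saddle-point asymptotic (matching the constants in Lemma \ref{lem:2.1}$(\mathrm{ii})$) for primes $p\ll\kappa^{1/(2\sigma)}$, control $j$-th derivatives via Cauchy's formula on disks $|s-\kappa|\le\kappa/2$, and evaluate the main prime sum $\sum_p p^{-j\sigma}g^{(j)}(\kappa p^{-\sigma})$ by partial summation, the PNT with classical error term, the substitution $u=\kappa x^{-\sigma}$, and the geometric expansion of $(\log\kappa-\log u)^{-1}$.

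The one genuine structural difference is how the large-prime error is controlled. The paper introduces a second cutoff $y_2$ with $\kappa y_2^{-\sigma}=(\log\kappa\,\log_2\kappa)^{-\sigma N/(2\sigma-1)}$, applies Lemma \ref{lem:3.6}$(\mathrm{ii})$ only for $y_1<p<y_2$ (where the $p^{-1}$ error summed over that range contributes only $O(\log_2\kappa)$), and uses the cruder Lemma \ref{lem:3.6}$(\mathrm{iii})$ for $p\ge y_2$. You instead sharpen Lemma \ref{lem:2.2} to $\log G_p(z)-g(z)\ll\mathrm{Re}(z)\,p^{-1}$ for $\mathrm{Re}(z)\le1$ by exploiting that $\mu_p$ and $\mu_\infty$ have equal total mass (write $G_p(z)-G(z)=\int(e^{2z\cos\theta}-1)\,d(\mu_p-\mu_\infty)$); since $p^{-1}\le\mathrm{Re}(z)p^{-1}$ trivially when $\mathrm{Re}(z)>1$, the bound $e_p^{(j)}(\kappa)\ll 2^jj!\kappa^{1-j}(p^{-2\sigma}+p^{-\sigma-1})$ holds for all $p>y_1$ and the error is now summable over all large primes. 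This lets you work with a single cutoff $y_1\asymp\kappa^{1/(2\sigma)}$ and extend the $u$-integral all the way down to $0$, so the coefficients $\mathfrak{g}_{n,j}(\sigma)$ appear directly rather than after adding a lower tail of size tuned by the choice of $y_2$. The cost is that you need to verify the sharpening, and that you must track the tail of the $u$-integral at its \emph{upper} end $\kappa 2^{-\sigma}$ (coming from $p=2$), which is larger than the paper's $u_1\asymp\sqrt{\kappa}$; that tail is still a power of $\kappa$ smaller than the main term, so it is harmless. One small inaccuracy in your exposition: the phrase ``these primes contribute $\ll_j\kappa^{1-j}$'' hides a factor $2^jj!$ that the proposition requires to be explicit; the bound $\sum_{p\le y_1}e_p^{(j)}(\kappa)\ll 2^jj!\kappa^{1-j}$ is what Cauchy gives and is what you should record.
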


\begin{proposition}\label{prop:3.2}
Let $N \in \mathbb{Z}_{\geq1}$. 
Then we obtain
\begin{gather*}
f_1(\kappa)
=2\kappa(\log_2{\kappa}+\gamma)
+\frac{\kappa}{\log{\kappa}}
\left\{\sum_{n=0}^{N-1} \frac{\mathfrak{g}_{n,0}}{(\log{\kappa})^n}
+O\left( \frac{1}{(\log{\kappa})^N} \right)\right\}, \\
f'_1(\kappa)
=2(\log_2{\kappa}+\gamma)
+\frac{1}{\log{\kappa}}
\left\{\sum_{n=0}^{N-1} \frac{\mathfrak{g}_{n,1}}{(\log{\kappa})^n}
+O\left( \frac{1}{(\log{\kappa})^N} \right)\right\}, 
\end{gather*}
and for all $j \geq2$, 
\begin{gather*}
f_1^{(j)}(\kappa)
=\frac{\kappa^{1-j}}{\log{\kappa}}
\left\{\sum_{n=0}^{N-1} \frac{\mathfrak{g}_{n,j}}{(\log{\kappa})^n}
+O\left( \frac{2^jj!}{(\log{\kappa})^N} \right)\right\}
\end{gather*}
if $\kappa>0$ is large enough. 
Here, $\mathfrak{g}_{n,j}$ are defined as \eqref{eq:02261607}, and the implied constants depend on $N$. 
\end{proposition}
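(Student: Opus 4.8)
The plan is to run the argument in parallel with the proof of Proposition~\ref{prop:3.1}, summing the local cumulant-generating functions $f_{1,p}(\kappa)=\log F_{1,p}(\kappa)$ over all primes $p$ via the Euler product \eqref{eq:02262021}, so that $f_1(\kappa)=\sum_p f_{1,p}(\kappa)$. The one genuinely new feature at $\sigma=1$ is that the integral $\int_0^\infty g(u)u^{-2}\,du$ diverges; one is therefore forced to work with the modification $g_*$ of \eqref{eq:02240037} and to peel off the divergent part as the leading term $2\kappa(\log_2{\kappa}+\gamma)$. The point of departure is the exact identity
\begin{gather*}
\exp\bigl(2\kappa\lambda_{p,1}(\theta)\bigr)
=\bigl(1-2p^{-1}\cos\theta+p^{-2}\bigr)^{-\kappa}
=(1-p^{-1})^{-2\kappa}\bigl(1+b_p(1-\cos\theta)\bigr)^{-\kappa},
\qquad b_p=\frac{2p}{(p-1)^2},
\end{gather*}
which gives $f_{1,p}(\kappa)=-2\kappa\log(1-p^{-1})+\log J_p(\kappa)$ with $J_p(\kappa)=\int_0^\pi\bigl(1+b_p(1-\cos\theta)\bigr)^{-\kappa}\,d\mu_p(\theta)$.

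The core of the proof is the local approximation
\begin{gather*}
f_{1,p}(\kappa)=-2\kappa\log(1-p^{-1})\,\mathbf{1}_{p\le\kappa}+g_*(\kappa p^{-1})+E_p(\kappa),
\end{gather*}
which I would establish by the same two-regime analysis as in Proposition~\ref{prop:3.1}. For $p$ below roughly $\sqrt{\kappa}$ the argument $\kappa p^{-1}$ is large, and I would apply Laplace's method directly to $J_p(\kappa)$: the integrand peaks at $\theta=0$, the density $d\mu_p/d\theta$ vanishes there to order two, and this yields a $(\kappa p^{-1})^{-3/2}$-type factor which, by Lemma~\ref{lem:2.1}(ii), matches the large-argument behaviour of $g$, the additive constant $-\tfrac12\log(4\pi)$ being supplied on comparing $d\mu_p$ with $d\mu_\infty$ through Lemma~\ref{lem:2.2}. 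For $p$ above roughly $\sqrt{\kappa}$ I would instead expand $\log\bigl(1+b_p(1-\cos\theta)\bigr)=b_p(1-\cos\theta)+O(b_p^2)$ inside the exponential, so that $J_p(\kappa)$ is comparable to $e^{-2\kappa p^{-1}}G_p(\kappa p^{-1})$, and then pass from $G_p$ to $G$ by Lemma~\ref{lem:2.2}, sharpened for large $p$ by using the vanishing of the odd Fourier coefficients of $\mu_p$ (so that the error is small relative to $g(\kappa p^{-1})$ even when the latter is small). Putting the two regimes together, $E_p(\kappa)$ is $\ll p^{-1}+p\kappa^{-1}+\kappa p^{-2}+\kappa^2p^{-3}$ according to the size of $p$, plus an $O(1)$ for the finitely many $p$ below a fixed bound, so $\sum_p E_p(\kappa)\ll\kappa^{1/2}$, negligible against the error terms of the proposition.

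It then remains to sum the two main parts. First, Mertens' third theorem with the classical prime-number-theorem error term gives
\begin{gather*}
-2\kappa\sum_{p\le\kappa}\log(1-p^{-1})=2\kappa(\log_2{\kappa}+\gamma)+O\bigl(\kappa\exp(-c\sqrt{\log{\kappa}})\bigr);
\end{gather*}
it is essential here that the full expansion $-\log(1-p^{-1})=\sum_{m\ge1}m^{-1}p^{-m}$ is retained, so that Mertens' constant is automatically replaced by $\gamma$ --- the leftover $2\kappa\sum_p\sum_{m\ge2}m^{-1}p^{-m}$ is exactly what effects this correction, and it is this term (absent when $\sigma<1$) that forbids absorbing $-2\kappa\log(1-p^{-1})$ into an $O(\kappa)$ error. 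Second, writing $\pi(t)=\mathrm{li}(t)+O\bigl(t\exp(-c\sqrt{\log t})\bigr)$, integrating by parts against $g_*(\kappa/t)$, substituting $u=\kappa/t$, and expanding $(\log{\kappa}-\log{u})^{-1}=(\log{\kappa})^{-1}\sum_{n\ge0}(\log{u}/\log{\kappa})^{n}$ reduces $\sum_p g_*(\kappa p^{-1})$ to the integrals $\int_0^\infty g_*(u)u^{-2}(\log{u})^{n}\,du=\mathfrak{g}_{n,0}$ of \eqref{eq:02261607}, finite by Lemma~\ref{lem:2.3}; the tails of the geometric series and of the $u$-range, and the jump of $g_*$ at $u=1$, contribute only to the error. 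Combining yields the stated formula for $f_1(\kappa)$, and the formulas for $f_1'(\kappa)$ and $f_1^{(j)}(\kappa)$ with $j\ge2$ follow on differentiating the local decomposition: $-2\kappa\log(1-p^{-1})$ is linear in $\kappa$, so it contributes to $f_1'$ but not to $f_1^{(j)}$ for $j\ge2$ (whence the absence of a $\log_2{\kappa}$ term there), while $p^{-j}g_*^{(j)}(\kappa p^{-1})$ is summed by the same substitution to produce $\mathfrak{g}_{n,j}$, and the derivatives of $E_p(\kappa)$ are estimated by Cauchy's integral formula on a disc of radius $\asymp\kappa$.

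The main obstacle I anticipate is the uniform control of $E_p(\kappa)$ over the whole range of $p$: carrying out the Laplace-method analysis of $J_p(\kappa)$ for the small primes with enough precision to identify the constant coming from Lemma~\ref{lem:2.1}(ii), and sharpening the comparison $G_p\approx G$ for the large primes so that the accumulated error stays below $\kappa(\log{\kappa})^{-N}$, all the while tracking the dependence on $N$ and $j$ so that the $j$-th derivative acquires precisely the factor $2^jj!$ predicted by Lemma~\ref{lem:2.3}.
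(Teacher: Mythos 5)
Your plan matches the paper's proof in its essentials: sum the local cumulant-generating functions $f_{1,p}(\kappa)$ over primes via the Euler product \eqref{eq:02262021}, approximate them in the three regimes $p\le y_1$, $y_1<p<y_2$, $p\ge y_2$ (via what are in effect Lemmas~\ref{lem:3.3}--\ref{lem:3.5}), extract $2\kappa(\log_2\kappa+\gamma)$ via Mertens' third theorem with a PNT-type error, and reduce $\sum_p g_*(\kappa p^{-1})$ to the integrals $\mathfrak{g}_{n,0}$ by partial summation against $\pi(t)$ and the geometric expansion of $(\log\kappa-\log u)^{-1}$. The only notable cosmetic difference is the bookkeeping: you carry the factor $\mathbf{1}_{p\le\kappa}$ in the local formula, whereas the paper (a) keeps $g$ (not $g_*$) in the Lemma~\ref{lem:3.6}(ii) range and only switches to $g_*$ when summing, and (b) splits the middle range $y_1<p<y_2$ at $p=\kappa$ so that the extra $2\kappa p^{-1}$ is visible and can be merged into the Mertens sum.

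The one step that cannot be carried out as you describe is the claim that the formulas for $f_1'$ and $f_1^{(j)}$ ``follow on differentiating the local decomposition,'' with ``the derivatives of $E_p(\kappa)$ estimated by Cauchy's integral formula on a disc of radius $\asymp\kappa$.'' If $E_p(\kappa)$ is defined as $f_{1,p}(\kappa)+2\kappa\log(1-p^{-1})\mathbf{1}_{p\le\kappa}-g_*(\kappa p^{-1})$, then $E_p$ is \emph{not} analytic in $\kappa$: the indicator jumps at $\kappa=p$, and $g_*$ itself has a corner at $u=1$, i.e.\ at $\kappa=p$. On a disc of radius $\asymp\kappa$ around $\kappa$ there are $\asymp\kappa/\log\kappa$ primes $p$ at which the decomposition breaks, so Cauchy's formula cannot be applied to $E_p$. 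The paper avoids this by never differentiating a $g_*$-decomposition: Cauchy is applied to the genuinely holomorphic local factors $f_{1,p}(s)$ (through Lemma~\ref{lem:3.6}), producing directly
\begin{gather*}
f'_{1,p}(\kappa)=-2\log(1-p^{-1})+O(\kappa^{-1})\quad(p\le y_1),\qquad
f'_{1,p}(\kappa)=p^{-1}g'(\kappa p^{-1})+O(\kappa p^{-2}+p^{-1})\quad(y_1<p<y_2),
\end{gather*}
and similarly $f^{(j)}_{1,p}(\kappa)=p^{-j}g^{(j)}(\kappa p^{-1})+\cdots$ for $j\ge2$; only \emph{after} summing is the relation $g'(u)=g_*'(u)+2$ (for $u>1$) used to recombine the $2p^{-1}$ terms with the Mertens sum for $j=1$, and for $j\ge2$ one simply notes $g^{(j)}=g_*^{(j)}$ so no recombination is needed. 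Your structural observations are all correct --- the linear piece drops out for $j\ge2$, the derivative formula only differs from the $j=0$ case in bookkeeping --- but you should rephrase the differentiation step so that Cauchy is applied at the level of $f_{1,p}$ (as in Lemma~\ref{lem:2.3} and the proof of Proposition~\ref{prop:3.1}), not at the level of the $g_*$-decomposition.
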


Let $1/2<\sigma \leq1$ and $N \in \mathbb{Z}_{\geq1}$. 
For $\kappa \geq 6$, we determine the parameters $y_1, y_2$ such that $2\leq y_1<y_2$ by the equations
\begin{gather*}
\kappa y_1^{-2\sigma}
=\delta
\quad\text{and}\quad
\kappa y_2^{-\sigma}
=(\log{\kappa}\,\log_2{\kappa})^{-\sigma N/(2\sigma-1)}, 
\end{gather*}
where $\delta>0$ is a small absolute constant chosen later. 
Then we show three lemmas on the local factors $F_{\sigma,p}(s)$ of \eqref{eq:02262021} toward the proofs of Propositions \ref{prop:3.1} and \ref{prop:3.2}. 

\begin{lemma}\label{lem:3.3}
Let $1/2<\sigma \leq1$ and $\kappa \geq6$. 
For any $p \leq y_1$, we have 
\begin{gather*}
F_{\sigma,p}(s)
=\frac{1}{\sqrt{4\pi}}
\frac{\exp(2s \lambda_{p,\sigma}(0))}{(s|\lambda''_{p,\sigma}(0)|)^{3/2}}
(1+p^{-1})(1-p^{-1})^{-2}
\left(1+O\left(\frac{1}{\sqrt{\kappa}}\right)\right) 
\end{gather*}
uniformly in the disk $|s-\kappa|\leq \kappa/2$, where $\lambda_{p,\sigma}(\theta)$ is defined as \eqref{eq:02262144}, and the implied constant depends only on the choice of $\delta$.  
\end{lemma}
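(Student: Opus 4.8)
The plan is to recognize $F_{\sigma,p}(s) = \int_0^\pi \exp(2s\lambda_{p,\sigma}(\theta))\,d\mu_p(\theta)$ as a Laplace-type integral and apply the classical Laplace (saddle-point on a real interval) method, with $s$ ranging over the disk $|s-\kappa|\le\kappa/2$ so that $\RE(s) \asymp \kappa$ is large precisely because $p \le y_1$ forces $\kappa p^{-2\sigma} \ge \kappa y_1^{-2\sigma} = \delta$, i.e. $\kappa$ large relative to $p$. First I would record the relevant analytic facts about the phase $\lambda_{p,\sigma}(\theta)$: it is maximized on $[0,\pi]$ at $\theta=0$, where $\lambda_{p,\sigma}(0) = \sum_{m\ge1} m^{-1}p^{-m\sigma} = -\log(1-p^{-\sigma})$; its first derivative vanishes at $\theta=0$; and $\lambda''_{p,\sigma}(0) = -\sum_{m\ge1} m p^{-m\sigma}<0$, with $|\lambda''_{p,\sigma}(0)| = p^{-\sigma}(1-p^{-\sigma})^{-2}\asymp p^{-\sigma}$. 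I would also note the behavior of the density: near $\theta=0$ we have $d\mu_p(\theta) = (1+p^{-1})(1-p^{-1})^{-2}\frac{2}{\pi}\theta^2\,d\theta\,(1+O(\theta^2))$, since $(1-2\cos2\theta/p+p^{-2})^{-1}\to(1-2/p+p^{-2})^{-1}=(1-p^{-1})^{-2}$ as $\theta\to0$ and $\sin^2\theta = \theta^2(1+O(\theta^2))$.

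Next I would carry out the Laplace expansion. Writing $s\lambda_{p,\sigma}(\theta) = s\lambda_{p,\sigma}(0) - \tfrac12 s|\lambda''_{p,\sigma}(0)|\theta^2 + O(s|\lambda''_{p,\sigma}(0)|\theta^4)$ near $0$, and substituting $\theta = w/\sqrt{s|\lambda''_{p,\sigma}(0)|}$, the dominant contribution comes from a neighborhood of $\theta=0$ of width $\asymp (s|\lambda''_{p,\sigma}(0)|)^{-1/2}$. Because the density vanishes quadratically at $0$, the leading integral is
\[
\int_0^\infty \exp\!\bigl(-\tfrac12 s|\lambda''_{p,\sigma}(0)|\theta^2\bigr)\,\frac{2}{\pi}\theta^2\,d\theta = \frac{1}{\sqrt{4\pi}}\,\bigl(s|\lambda''_{p,\sigma}(0)|\bigr)^{-3/2},
\]
which, after multiplying by $\exp(2s\lambda_{p,\sigma}(0))$ and the constant $(1+p^{-1})(1-p^{-1})^{-2}$ from the density at $\theta=0$, produces exactly the claimed main term. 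The relevant large parameter controlling the error is $s|\lambda''_{p,\sigma}(0)| \asymp \kappa p^{-\sigma} \ge \kappa p^{-2\sigma}\cdot p^{\sigma} \ge \delta p^\sigma$ but more usefully $\ge \kappa y_1^{-\sigma}\ge \kappa^{1/2}$ up to constants (using $\kappa y_1^{-2\sigma}=\delta$, so $y_1^{-\sigma}=(\delta/\kappa)^{1/2}$, giving $\kappa p^{-\sigma}\ge \kappa\cdot(\delta/\kappa)^{1/2}=\sqrt{\delta\kappa}$); this is what yields the error term $O(\kappa^{-1/2})$, with the implied constant depending on $\delta$.

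The error analysis splits into two parts, and this is where the main work lies. First, the \emph{tail} $\theta$ bounded away from $0$: since $\lambda_{p,\sigma}(\theta)\le\lambda_{p,\sigma}(\theta_0)<\lambda_{p,\sigma}(0)$ there with a gap that is $\gg p^{-\sigma}\theta_0^2$ for small $\theta_0$ and $\gg p^{-\sigma}$ for $\theta_0$ of order $1$, the tail contributes at most $\exp(2\RE(s)\lambda_{p,\sigma}(0))\exp(-c\RE(s)p^{-\sigma}\theta_0^2)$, which beats any power of $\kappa^{-1/2}$ once $\theta_0$ is a suitable multiple of $(s|\lambda''_{p,\sigma}(0)|)^{-1/2}\sqrt{\log\kappa}$; crucially $\RE(s)\ge\kappa/2$ on the disk keeps this decay effective. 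Second, the \emph{local error} from replacing $\exp(s\lambda_{p,\sigma}(\theta))$ by its Gaussian approximation and the density by its value at $0$: both the quartic term $O(s|\lambda''_{p,\sigma}(0)|\theta^4)$ in the exponent and the $O(\theta^2)$ relative error in the density contribute, after the scaling $\theta\sim(s|\lambda''_{p,\sigma}(0)|)^{-1/2}$, a relative error of size $O((s|\lambda''_{p,\sigma}(0)|)^{-1}) = O(\kappa^{-1}p^{\sigma}) = O(\kappa^{-1/2})$ uniformly for $p\le y_1$. The main obstacle is keeping all these estimates uniform in $s$ over the whole disk $|s-\kappa|\le\kappa/2$ (where $s$ is complex, so $\exp(s\lambda_{p,\sigma}(\theta))$ oscillates and one must use $\RE(s)\asymp\kappa$ to dominate) and simultaneously uniform in $p$ up to $y_1$, tracking that the genuinely large quantity is $\kappa p^{-\sigma}$ rather than $\kappa$ itself; this is exactly why the cutoff $y_1$ is defined by $\kappa y_1^{-2\sigma}=\delta$, and why the implied constant ends up depending only on $\delta$.
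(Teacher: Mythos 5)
Your plan is essentially the same Laplace/saddle-point expansion that the paper uses, centred on the endpoint $\theta=0$, with the cutoff $y_1$ guaranteeing that the large parameter $s|\lambda''_{p,\sigma}(0)|\asymp\kappa p^{-\sigma}\gg\sqrt{\kappa}$ is uniformly large. The structure (quadratic degeneracy of the Sato--Tate type density at $\theta=0$, Gaussian main term, tail plus local error) matches the paper's proof, which performs a slightly different change of variables $\phi=\theta^2\mu_{p,\sigma}(\theta)$ but arrives at the same integrals.

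One arithmetic slip: the exponent in $F_{\sigma,p}(s)$ is $2s\lambda_{p,\sigma}(\theta)$, so after Taylor-expanding you should have $\exp\bigl(2s\lambda_{p,\sigma}(0)\bigr)\exp\bigl(-s|\lambda''_{p,\sigma}(0)|\theta^2+\cdots\bigr)$; your displayed leading integral has $\exp\bigl(-\tfrac12 s|\lambda''_{p,\sigma}(0)|\theta^2\bigr)$, which evaluates to $\tfrac{\sqrt{2}}{\sqrt{\pi}}(s|\lambda''_{p,\sigma}(0)|)^{-3/2}$, not $\tfrac{1}{\sqrt{4\pi}}(s|\lambda''_{p,\sigma}(0)|)^{-3/2}$. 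Drop the $\tfrac12$ (the $\tfrac12$ from the Taylor coefficient cancels the $2$ in the exponent) and the constant comes out as claimed.

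One point where you are actually \emph{more} careful than the paper, and it matters: you invoke that the density error is $O(\theta^2)$ and the phase error is $O(s|\lambda''_{p,\sigma}(0)|\theta^4)$, which uses the fact that both $\lambda_{p,\sigma}$ and the density are even in $\theta$, so the odd Taylor coefficients vanish. This is exactly what yields a relative local error $O\bigl((s|\lambda''_{p,\sigma}(0)|)^{-1}\bigr)=O\bigl((\kappa p^{-\sigma})^{-1}\bigr)\ll\kappa^{-1/2}$. The paper's written argument only records $\mu_{p,\sigma}(\theta)=1+O(|\theta|)$ and, after the substitution, an integrand correction $O(\sqrt{\phi})$; this gives a relative error $(\kappa p^{-\sigma})^{-1/2}$, which from $\kappa p^{-\sigma}\gg\sqrt{\kappa}$ is only $O(\kappa^{-1/4})$. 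To obtain the stated $O(\kappa^{-1/2})$ one does need the sharper even-parity estimate $\mu_{p,\sigma}(\theta)=1+O(\theta^2)$ that your plan uses, so your error accounting is the correct one for the lemma as stated.
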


\begin{proof}
Let $\Lambda_p(\theta)=(1+p^{-1}) (1-2(\cos \theta)p^{-1}+p^{-2})^{-1}$. 
We represent $F_{\sigma,p}(s)$ as 
\begin{gather*}
F_{\sigma,p}(s)
=\frac{2}{\pi}
\int_{0}^{\pi} \exp(2s \lambda_{p,\sigma}(\theta)) 
\Lambda_p(2\theta)
\sin^2\theta \,d \theta.
\end{gather*}
Then we estimate the integral by noting that $\lambda_{p,\sigma}(\theta)$ takes the maximum value at $\theta=0$ in the interval $[0,\pi]$. 
Recall that $\lambda'_{p,\sigma}(0)=0$ and $\lambda''_{p,\sigma}(0)<0$. 
By the Taylor series expansion, we have 
\begin{gather*}
\lambda_{p,\sigma}(\theta)
=\lambda_{p,\sigma}(0)
+\frac{\lambda''_{p,\sigma}(0)}{2}\theta^2 \mu_{p,\sigma}(\theta), 
\end{gather*}
where $\mu_{p,\sigma}(\theta)$ is a function of $\theta$ represented as
\begin{gather}\label{eq:02262229}
\mu_{p,\sigma}(\theta)
=1+\sum_{j=1}^{\infty} \frac{2}{\lambda''_{p,\sigma}(0)} \frac{\lambda^{(j+2)}_{p,\sigma}(0)}{(j+2)!} \theta^j. 
\end{gather}
The derivatives of $\lambda_{p,\sigma}(\theta)$ at $\theta=0$ are evaluated as 
\begin{gather*}
\lambda''_{p,\sigma}(0)
=-\sum_{m=1}^{\infty} m p^{-m \sigma}
\asymp p^{-\sigma}
\quad\text{and}\quad
\lambda^{(k)}_{p,\sigma}(0)
\ll k!p^{-\sigma}
\end{gather*}
for all $k \geq3$, where the implied constants are absolute. 
Thus the coefficients of the power series \eqref{eq:02262229} are uniformly bounded, and moreover, there exists a small absolute constant $c>0$ such that $\mu_{p,\sigma}(\theta)=1+O(|\theta|)$ holds uniformly for $|\theta|\leq c$. 
Then, we consider the integral 
\begin{align*}
I_1
&=\int_{0}^{c} \exp\left(2s \left(\lambda_{p,\sigma}(\theta)-\lambda_{p,\sigma}(0) \right)\right)  
\Lambda_p(2\theta)
\sin^2\theta \,d \theta \\
&=\int_{0}^{c} \exp\left(s \lambda''_{p,\sigma}(0) \theta^2 \mu_{p,\sigma}(\theta)\right) 
\Lambda_p(2\theta)
\sin^2\theta \,d \theta. 
\end{align*}
We make a change of variables such that $\phi=\theta^2 \mu_{p,\sigma}(\theta)$. 
For $|\theta|\leq c$, we have
\begin{align*}
\Lambda_p(2\theta)
&=\Lambda_p(0)\left(1+O(\phi)\right), \\
\sin^2\theta
&=\phi\left(1+O(\sqrt{\phi})\right), \\
\frac{d \theta}{d \phi}
&=\frac{1}{2\sqrt{\phi}}\left(1+O(\sqrt{\phi})\right)
\end{align*}
with absolute implied constants. 
Hence we obtain
\begin{gather*}
I_1
=\frac{1}{2} \Lambda_p(0)
\int_{0}^{c_1} \exp\left(s \lambda''_{p,\sigma}(0) \phi \right)
\sqrt{\phi} \left(1+O(\sqrt{\phi})\right) \,d \phi, 
\end{gather*}
where we put $c_1=c^2 \mu_{p,\sigma}(c)$. 
To derive the main term, we see that
\begin{align*}
\int_{0}^{\infty} \exp\left(s \lambda''_{p,\sigma}(0) \phi \right)
\sqrt{\phi} \,d \phi
&=\int_{0}^{\infty} \exp\left(-s |\lambda''_{p,\sigma}(0)| \phi \right)
\sqrt{\phi} \,d \phi \\
&=\frac{\sqrt{\pi}}{2} \frac{1}{(s|\lambda''_{p,\sigma}(0)|)^{3/2}}
\end{align*}
by noting that $\lambda''_{p,\sigma}(0)<0$ and $\RE(s)>0$. 
Therefore, $I_1$ is calculated as 
\begin{gather}\label{eq:02270036}
I_1
=\frac{1}{2} \Lambda_p(0)
\left(\frac{\sqrt{\pi}}{2} \frac{1}{(s|\lambda''_{p,\sigma}(0)|)^{3/2}}
-I_{1,1}+I_{1,2}\right), 
\end{gather}
where $I_{1,1}$ and $I_{1,2}$ are the following integrals: 
\begin{align*}
I_{1,1}
&=\int_{c_1}^{\infty} \exp\left(-s |\lambda''_{p,\sigma}(0)| \phi \right)
\sqrt{\phi} \,d \phi; \\
I_{1,2}
&=\int_{0}^{c_1} \exp\left(-s |\lambda''_{p,\sigma}(0)| \phi \right)
O(\phi) \,d \phi. 
\end{align*}
We recall that $\lambda''_{p,\sigma}(0) \asymp p^{-\sigma}$ and $c_1=\mu_{p,\sigma}(c)c^2 \asymp1$ for every $p$. 
Write $s=u+iv$. 
Then $u \asymp \kappa$ holds in the disk $|s-\kappa|\leq \kappa/2$. 
Thus, we evaluate these integrals as
\begin{gather*}
I_{1,1}
\ll \exp\left(-\frac{u|\lambda''_{p,\sigma}(0)|}{2}c_1 \right)
\frac{1}{(u|\lambda''_{p,\sigma}(0)|)^{3/2}}
\ll (\kappa p^{-\sigma})^{-2}, \\
I_{1,2}
\ll
\int_{0}^{\infty} \exp\left(-u |\lambda''_{p,\sigma}(0)| \phi \right)
\phi \,d \phi
\ll (\kappa p^{-\sigma})^{-2}.
\end{gather*}
Note further that $|s|\asymp \kappa$ in the disk $|s-\kappa|\leq \kappa/2$. 
Hence formula \eqref{eq:02270036} yields
\begin{align}\label{eq:02271539}
I_1
&=\frac{\sqrt{\pi}}{4} \frac{\Lambda_p(0)}{(s|\lambda''_{p,\sigma}(0)|)^{3/2}}
\left(1+O\left(\frac{(|s| p^{-\sigma})^{3/2}}{(\kappa p^{-\sigma})^{2}}\right)\right) \\
&=\frac{\sqrt{\pi}}{4} \frac{\Lambda_p(0)}{(s|\lambda''_{p,\sigma}(0)|)^{3/2}}
\left(1+O\left(\frac{1}{\sqrt{\kappa}}\right)\right) \nonumber
\end{align}
since $\kappa p^{-\sigma} \gg \sqrt{\kappa}$ for any $p \leq y_1$. 
The remaining work is to estimate the integral
\begin{gather*}
I_2
=\int_{c}^{\pi} \exp\left(2s \left(\lambda_{p,\sigma}(\theta)-\lambda_{p,\sigma}(0) \right)\right)
\Lambda_{p}(2\theta)
\sin^2\theta \,d \theta. 
\end{gather*}
The function $\lambda_{p,\sigma}(\theta)$ is decreasing on the interval $[0,\pi]$. 
Therefore we obtain 
\begin{align*}
I_2
&\ll \exp\left(2u \left(\lambda_{p,\sigma}(c)-\lambda_{p,\sigma}(0) \right)\right)
\int_{0}^{c} |\Lambda_{p}(2\theta)\sin^2\theta| \,d \theta \\
&\ll \exp\left(-u |\lambda''_{p,\sigma}(0)| c^2 \mu_{p,\sigma}(c)\right). 
\end{align*}
Since $u \asymp \kappa$, $c^2\mu_{p,\sigma}(c) \asymp1$, and $\lambda''_{p,\sigma}(0) \asymp p^{-\sigma}$, we have $I_2 \ll (\kappa p^{-\sigma})^{-2}$. 
It yields
\begin{gather}\label{eq:02271540}
I_2
= \frac{\Lambda_{p}(0)}{(s|\lambda''_{p,\sigma}(0)|)^{3/2}}
O\left(\frac{1}{\sqrt{\kappa}}\right) 
\end{gather}
due to $\Lambda_{p}(0)>1$ for every $p$.
Combining \eqref{eq:02271539} and \eqref{eq:02271540}, we conclude 
\begin{align*}
F_{\sigma,p}(s)
&=\frac{2}{\pi} 
\exp(2s \lambda_{p,\sigma}(0) )
(I_1+I_2) \\
&=\frac{1}{\sqrt{4\pi}}
\frac{\exp(2s \lambda_{p,\sigma}(0))}{(s|\lambda''_{p,\sigma}(0)|)^{3/2}}
\Lambda_{p}(0)
\left(1+O\left(\frac{1}{\sqrt{\kappa}}\right)\right)
\end{align*}
as desired. 
\end{proof}

\begin{lemma}\label{lem:3.4}
Let $1/2<\sigma \leq1$ and $\kappa \geq6$. 
For any $p>y_1$, we have 
\begin{gather*}
F_{\sigma,p}(s)
=G(sp^{-\sigma})
\left(1+O\left(\kappa p^{-2\sigma}+p^{-1}\right)\right) 
\end{gather*}
uniformly in the disk $|s-\kappa|\leq \kappa/2$, where $G(z)$ is defined as \eqref{eq:02251719}. 
\end{lemma}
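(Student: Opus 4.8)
The plan is to compare $F_{\sigma,p}(s)$ first with the integral $G_p(sp^{-\sigma})$ against the \emph{same} measure $\mu_p$, and then to pass from $G_p$ to $G$ by Lemma~\ref{lem:2.2}. Write $F_{\sigma,p}(s)=\int_{0}^{\pi}\exp(2s\lambda_{p,\sigma}(\theta))\,d\mu_p(\theta)$ and split off the leading term of the series \eqref{eq:02262144}: we have $\lambda_{p,\sigma}(\theta)=p^{-\sigma}\cos\theta+r_{p,\sigma}(\theta)$ with $r_{p,\sigma}(\theta)=\sum_{m\geq2}\frac{\cos m\theta}{m}p^{-m\sigma}$, and $|r_{p,\sigma}(\theta)|\ll p^{-2\sigma}$ uniformly in $\theta$ since $p^{\sigma}\geq\sqrt2$. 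In the disk $|s-\kappa|\leq\kappa/2$ one has $|s|\asymp\kappa$, $u:=\RE(s)\asymp\kappa$ and $|\IM(s)|\leq\kappa/2\leq u$; in particular $z=sp^{-\sigma}$ lies in the sector $\RE(z)>0$, $|\IM(z)|\leq\RE(z)$ to which both Lemma~\ref{lem:2.1}(ii) and Lemma~\ref{lem:2.2} apply.

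First I would record the elementary exponential estimate. For $p>y_1$ we have $\kappa p^{-2\sigma}<\kappa y_1^{-2\sigma}=\delta$, so $|2s\,r_{p,\sigma}(\theta)|\ll|s|\,p^{-2\sigma}\ll\kappa p^{-2\sigma}$ stays bounded on the disk, and $\exp(w)-1\ll|w|$ on bounded sets therefore gives
\begin{gather*}
\exp(2s\lambda_{p,\sigma}(\theta))
=\exp(2sp^{-\sigma}\cos\theta)\bigl(1+O(\kappa p^{-2\sigma})\bigr)
\end{gather*}
uniformly in $\theta\in[0,\pi]$. Integrating against $d\mu_p(\theta)$ and using $|\exp(2sp^{-\sigma}\cos\theta)|=\exp(2up^{-\sigma}\cos\theta)$, this yields
\begin{gather*}
F_{\sigma,p}(s)=G_p(sp^{-\sigma})+O\!\left(\kappa p^{-2\sigma}\,G_p(up^{-\sigma})\right).
\end{gather*}

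The remaining, and slightly delicate, point is that the error is expressed through the \emph{real} argument $up^{-\sigma}$ while the main term $G_p(sp^{-\sigma})$ is complex, so one cannot simply factor the $1+O(\cdot)$ out of a complex-valued integral. To turn the additive error into a relative one I would use that $|G(z)|\asymp G(\RE(z))$ uniformly on the sector $\RE(z)>0$, $|\IM(z)|\leq\RE(z)$: for $|z|$ large this follows from Lemma~\ref{lem:2.1}(ii) (there $|z|^{3/2}\asymp\RE(z)^{3/2}$ in the sector), and for $|z|$ bounded from the nonvanishing of $G$ on the sector together with $G(0)=1$. Lemma~\ref{lem:2.2} gives both $G_p(up^{-\sigma})\ll G(up^{-\sigma})$ and $G_p(sp^{-\sigma})=G(sp^{-\sigma})\bigl(1+O(p^{-1})\bigr)$, whence $G_p(up^{-\sigma})\ll G(up^{-\sigma})\asymp|G(sp^{-\sigma})|$ and
\begin{gather*}
F_{\sigma,p}(s)
=G(sp^{-\sigma})\bigl(1+O(p^{-1})\bigr)+G(sp^{-\sigma})\,O(\kappa p^{-2\sigma})
=G(sp^{-\sigma})\bigl(1+O(\kappa p^{-2\sigma}+p^{-1})\bigr),
\end{gather*}
which is the assertion. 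The main obstacle is not any single hard inequality but precisely this bookkeeping: converting an additive error bounded by $G(\RE(z))$ into a relative error against the complex value $G(z)$, which is where the sectoral comparison $|G(z)|\asymp G(\RE(z))$---and hence the hypothesis $|\IM(s)|\leq\RE(s)$ built into the disk $|s-\kappa|\leq\kappa/2$---is essential.
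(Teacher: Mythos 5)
Your proof is correct and follows essentially the same route as the paper: split off the leading term $p^{-\sigma}\cos\theta$ of $\lambda_{p,\sigma}(\theta)$, use $\exp(w)=1+O(|w|)$ on the bounded remainder (justified by $\kappa p^{-2\sigma}<\delta$ for $p>y_1$), integrate to get $F_{\sigma,p}(s)=G_p(sp^{-\sigma})+O(\kappa p^{-2\sigma}G_p(up^{-\sigma}))$, and then convert the additive error into a multiplicative one via Lemma~\ref{lem:2.2} and the sectoral comparison $|G(z)|\asymp G(\RE z)$ from Lemma~\ref{lem:2.1}(ii). Your extra remark that the comparison for bounded $|z|$ needs nonvanishing plus $G(0)=1$ (rather than the asymptotic of Lemma~\ref{lem:2.1}(ii) alone) is a small point the paper glosses over but does not change the argument.
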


\begin{proof}
Using the formula $\exp(z)=1+O(|z|)$ with $|z| \leq1$, we have
\begin{align*}
\exp(2s \lambda_{p,\sigma}(\theta))
&=\exp\left(2s(\cos \theta)p^{-\sigma}+O\left(|s| p^{-2\sigma}\right)\right) \\
&=\exp\left(2s(\cos \theta)p^{-\sigma}\right)
\left(1+O\left(\kappa p^{-2\sigma}\right)\right) 
\end{align*}
since $|s| p^{-2\sigma} \ll \kappa p^{-2\sigma} \leq \delta$ is valid in the disk $|s-\kappa|\leq \kappa/2$ for any $p>y_1$. 
Hence $F_{\sigma,p}(s)$ is estimated as 
\begin{align*}
F_{\sigma,p}(s)
&=\int_{0}^{\pi} \exp\left(2s(\cos \theta)p^{-\sigma}\right)
\left(1+O\left(\kappa p^{-2\sigma}\right)\right) \,d \mu_p(\theta) \\
&=G_p(sp^{-\sigma})+O\left(\kappa p^{-2\sigma}G_p(u p^{-\sigma})\right), 
\end{align*}
where $G_p(z)$ is defined as \eqref{eq:02251719}, and we write $s=u+iv$. 
By Lemma \ref{lem:2.2}, we obtain
\begin{gather*}
F_{\sigma,p}(s)
=G(sp^{-\sigma})
\left(1+O\left(p^{-1}+\kappa p^{-2\sigma}\frac{G(u p^{-\sigma})}{|G(s p^{-\sigma})|} \right)\right). 
\end{gather*}
Recall that the estimate $|G(sp^{-\sigma})| \asymp G(u p^{-\sigma})$ holds in the disk $|s-\kappa|\leq \kappa/2$ by Lemma \ref{lem:2.1} $(\mathrm{ii})$. 
Thus the result follows. 
\end{proof}

\begin{lemma}\label{lem:3.5}
Let $1/2<\sigma \leq1$ and $\kappa \geq6$. 
For any $p \geq y_2$, we have 
\begin{gather*}
F_{\sigma,p}(s)
=1+O\left(\kappa^2 p^{-2\sigma}\right) 
\end{gather*}
uniformly in the disk $|s-\kappa|\leq \kappa/2$. 
\end{lemma}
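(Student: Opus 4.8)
\emph{Proof proposal.} The plan is to expand the integrand of
\[
F_{\sigma,p}(s)=\mathbb{E}\bigl[\exp(2s\lambda_{p,\sigma}(\Theta_p))\bigr]=\int_0^\pi\exp(2s\lambda_{p,\sigma}(\theta))\,d\mu_p(\theta)
\]
to first order and to extract a parity cancellation in the linear term. First I would record the elementary bounds available in the relevant range: in the disk $|s-\kappa|\le\kappa/2$ one has $|s|\asymp\kappa$, while for $p\ge y_2$ the defining relation $\kappa y_2^{-\sigma}=(\log\kappa\,\log_2\kappa)^{-\sigma N/(2\sigma-1)}$ gives $\kappa p^{-\sigma}\le\kappa y_2^{-\sigma}\le1$ since the exponent is negative and $\log\kappa\,\log_2\kappa\ge1$ for $\kappa\ge6$. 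Together with the trivial estimate
\[
|\lambda_{p,\sigma}(\theta)|\le\sum_{m\ge1}\frac{p^{-m\sigma}}{m}=-\log(1-p^{-\sigma})\ll p^{-\sigma},
\]
valid uniformly for $\sigma>1/2$, this yields $|2s\lambda_{p,\sigma}(\theta)|\ll\kappa p^{-\sigma}\ll1$ for all $\theta\in[0,\pi]$.

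Using $\exp(w)=1+w+O(|w|^2)$ for bounded $w$, I would then write $\exp(2s\lambda_{p,\sigma}(\theta))=1+2s\lambda_{p,\sigma}(\theta)+O(\kappa^2p^{-2\sigma})$ and integrate against the probability measure $\mu_p$ to obtain
\[
F_{\sigma,p}(s)=1+2s\int_0^\pi\lambda_{p,\sigma}(\theta)\,d\mu_p(\theta)+O(\kappa^2p^{-2\sigma}).
\]
The heart of the argument is to show that the linear term is in fact $O(\kappa p^{-2\sigma})$ rather than merely $O(\kappa p^{-\sigma})$. For this I would invoke the reflection symmetry $\theta\mapsto\pi-\theta$ of the measure $\mu_p$, which is immediate from \eqref{eq:02271312} because both $\cos2\theta$ and $\sin^2\theta$ are invariant under it. Hence
\[
\int_0^\pi\lambda_{p,\sigma}(\theta)\,d\mu_p(\theta)=\frac12\int_0^\pi\bigl(\lambda_{p,\sigma}(\theta)+\lambda_{p,\sigma}(\pi-\theta)\bigr)\,d\mu_p(\theta),
\]
and since $\cos(m(\pi-\theta))=(-1)^m\cos(m\theta)$, the odd-order harmonics cancel and $\lambda_{p,\sigma}(\theta)+\lambda_{p,\sigma}(\pi-\theta)=\sum_{l\ge1}l^{-1}\cos(2l\theta)\,p^{-2l\sigma}$, whose modulus is at most $-\log(1-p^{-2\sigma})\ll p^{-2\sigma}$ uniformly in $\theta$. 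Therefore $\int_0^\pi\lambda_{p,\sigma}(\theta)\,d\mu_p(\theta)\ll p^{-2\sigma}$, so the linear term is $\ll\kappa p^{-2\sigma}\le\kappa^2p^{-2\sigma}$, and the asserted estimate follows.

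The only genuine point of care is this parity observation. The careless bound $|s|\int_0^\pi|\lambda_{p,\sigma}|\,d\mu_p\ll\kappa p^{-\sigma}$ for the linear term would be insufficient, because in the range $p\ge y_2$ the quantity $\kappa p^{-\sigma}$ is small but still exceeds $\kappa^2p^{-2\sigma}=(\kappa p^{-\sigma})^2$; gaining the extra factor $p^{-\sigma}$ from the vanishing of the first harmonic against $\mu_p$ is precisely what makes the error term small enough to survive summation over $p\ge y_2$ in the proofs of Propositions \ref{prop:3.1} and \ref{prop:3.2}. Everything else---the Taylor expansion of the exponential, the bound on $\lambda_{p,\sigma}$, and the fact that $\mu_p$ has total mass one---is routine.
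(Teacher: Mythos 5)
Your proof is correct and follows essentially the same route as the paper: Taylor-expand the exponential using $|2s\lambda_{p,\sigma}(\Theta_p)|\ll\kappa p^{-\sigma}\leq1$ for $p\geq y_2$, and then observe that the linear term vanishes to leading order after integration against $\mu_p$. The only difference is in how the cancellation is packaged: the paper first isolates the leading harmonic, writing $2s\lambda_{p,\sigma}(\theta)=2s(\cos\theta)p^{-\sigma}+O(\kappa p^{-2\sigma})$ and then citing the moment identity $\int_0^\pi(\cos\theta)\,d\mu_p(\theta)=0$, whereas you keep $\lambda_{p,\sigma}$ whole and deduce $\int_0^\pi\lambda_{p,\sigma}\,d\mu_p\ll p^{-2\sigma}$ from the $\theta\mapsto\pi-\theta$ symmetry of $\mu_p$, which kills all odd harmonics at once. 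These rest on exactly the same cancellation (the paper's moment identity is itself a consequence of the symmetry you invoke), so this is a cosmetic repackaging rather than a different argument; both proofs are sound.
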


\begin{proof}
By the Taylor series expansion, we have
\begin{gather*}
\exp(s \lambda_{p,\sigma}(\theta))
=1+2s(\cos \theta)p^{-\sigma}+O\left(\kappa^2 p^{-2\sigma}\right)
\end{gather*}
since $|s| p^{-\sigma} \ll \kappa p^{-\sigma} \leq 1$ is valid in the disk $|s-\kappa|\leq \kappa/2$ for any $p \geq y_2$. 
By simple calculations, we find the equalities
\begin{gather*}
\int_{0}^{\pi} \,d \mu_p(\theta)=1
\quad\text{and}\quad
\int_{0}^{\pi} (\cos \theta) \,d \mu_p(\theta)=0. 
\end{gather*}
Hence we obtain the conclusion. 
\end{proof}

Let $s=u+iv$ with $|s-\kappa|\leq \kappa/2$. 
If $\kappa>0$ is large enough, then we deduce from Lemma \ref{lem:3.3} that $F_{\sigma,p}(s)\neq0$ for $p \leq y_1$. 
Using Lemma \ref{lem:3.4}, we also obtain $F_{\sigma,p}(s)\neq0$ for $p>y_1$ if $\delta$ is small enough. 
Therefore one can define $f_{\sigma,p}(s)=\log{F}_{\sigma,p}(s)$ as a holomorphic function on $|s-\kappa|\leq \kappa/2$ for any prime number $p$, where the branch is chosen so that $f_{\sigma,p}(u)\in \mathbb{R}$ if $u>0$. 
Then formula \eqref{eq:02262021} yields
\begin{gather}\label{eq:02282344}
f_\sigma(s)
=\sum_{p}
f_{\sigma,p}(s). 
\end{gather}
Furthermore, we immediately deduce the following results from the above lemmas. 

\begin{lemma}\label{lem:3.6}
Let $\kappa>0$ be a large real number. 
Then the following asymptotic formulas hold uniformly in the disk $|s-\kappa|\leq \kappa/2$. 
\begin{itemize}
\item[$(\mathrm{i})$] 
For any $p \leq y_1$, we have 
\begin{gather*}
f_{\sigma,p}(s)
=2s \lambda_{p,\sigma}(0)
-\frac{3}{2}\log(s|\lambda''_{p,\sigma}(0)|)
-\frac{1}{2}\log{4\pi}
+O\left(\frac{1}{\sqrt{\kappa}}+p^{-1} \right), 
\end{gather*}
where $\lambda_{p,\sigma}(\theta)$ is defined as \eqref{eq:02262144}. 
\item[$(\mathrm{ii})$] 
For any $y_1<p<y_2$, we have 
\begin{gather*}
f_{\sigma,p}(s)
=g(sp^{-\sigma})
+O\left(\kappa p^{-2\sigma}+p^{-1}\right), 
\end{gather*}
where $g(z)=\log{G}(z)$ as in Section \ref{sec:2}.  
\item[$(\mathrm{iii})$] 
For any $p \geq y_2$, we have $f_{\sigma,p}(s)=O\left(\kappa^2 p^{-2\sigma}\right)$.
\end{itemize}
\end{lemma}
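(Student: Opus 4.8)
The plan is to read off all three formulas by taking logarithms in Lemmas \ref{lem:3.3}, \ref{lem:3.4}, and \ref{lem:3.5}. The paragraph preceding the statement already guarantees $F_{\sigma,p}(s)\neq0$ on the disk $|s-\kappa|\leq\kappa/2$ for every $p$, once $\kappa$ is large and $\delta$ small, so $f_{\sigma,p}(s)=\log F_{\sigma,p}(s)$ makes sense there as a holomorphic function with $f_{\sigma,p}(u)\in\mathbb{R}$ for $u>0$. The one thing I would check with care is the branch bookkeeping: each of the three lemmas presents $F_{\sigma,p}(s)$ as a product of a ``main'' factor and a factor of the shape $1+O(\eta_p)$ with $\eta_p\to0$ uniformly, and one must verify that $\log F_{\sigma,p}$ splits as the sum of the logarithms of those factors, taken with the branches that are real on the positive real axis. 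I would do this by observing that the identity holds at a single real point $s=u>0$, where every factor in sight is real and positive, and then invoking analytic continuation on the simply connected disk.

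For $p\leq y_1$ I would take the logarithm of the expression in Lemma \ref{lem:3.3}: the factor $(s|\lambda''_{p,\sigma}(0)|)^{3/2}$ produces $-\frac{3}{2}\log(s|\lambda''_{p,\sigma}(0)|)$, with the principal branch of $\log s$ (legitimate since $\RE s\geq\kappa/2>0$ and $\lambda''_{p,\sigma}(0)\asymp -p^{-\sigma}\neq0$); the constant $(4\pi)^{-1/2}$ gives $-\frac{1}{2}\log{4\pi}$; the factor $(1+p^{-1})(1-p^{-1})^{-2}$ gives $\log(1+p^{-1})-2\log(1-p^{-1})=O(p^{-1})$; and since $1+O(1/\sqrt{\kappa})$ lies within distance $<\frac12$ of $1$ for large $\kappa$, its logarithm is $O(1/\sqrt{\kappa})$. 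Collecting these gives (i). For $y_1<p<y_2$ I would take the logarithm in Lemma \ref{lem:3.4}: the main factor $G(sp^{-\sigma})$ yields $g(sp^{-\sigma})$, which is meaningful because $sp^{-\sigma}$ lies in the right half-plane, where $G$ has no zeros by Lemma \ref{lem:2.1}~(i), and $g=\log G$ is exactly the branch fixed in Section \ref{sec:2}; the error factor $1+O(\kappa p^{-2\sigma}+p^{-1})$ satisfies $\kappa p^{-2\sigma}<\kappa y_1^{-2\sigma}=\delta$ and $p^{-1}\leq\frac12$, so for $\delta$ small its argument stays in a disk of radius $<1$ about $1$ and its logarithm is $O(\kappa p^{-2\sigma}+p^{-1})$, giving (ii). For $p\geq y_2$, Lemma \ref{lem:3.5} gives $F_{\sigma,p}(s)=1+O(\kappa^2p^{-2\sigma})$ with $\kappa^2p^{-2\sigma}\leq(\kappa y_2^{-\sigma})^2=(\log\kappa\,\log_2\kappa)^{-2\sigma N/(2\sigma-1)}\to0$, so $f_{\sigma,p}(s)=O(\kappa^2p^{-2\sigma})$, which is (iii).

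I do not anticipate a serious obstacle: the substance of the lemma is entirely contained in Lemmas \ref{lem:3.3}--\ref{lem:3.5}, and the only genuine points are the uniformity of the branch decomposition above and the routine check that in each of the three ranges the relevant perturbation ($1/\sqrt{\kappa}$; $\kappa p^{-2\sigma}+p^{-1}$; $\kappa^2p^{-2\sigma}$) is uniformly bounded away from $-1$, so that $\log(1+w)=O(|w|)$ may be applied. The mildly delicate point worth flagging is that in range (ii) the estimate $\kappa p^{-2\sigma}\leq\delta$ is precisely where the definition of $y_1$ and the smallness of $\delta$ are used; without it the logarithm of the error factor could not be linearised.
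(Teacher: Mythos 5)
Your proposal is correct and matches the paper's own treatment: the paper gives no explicit proof for Lemma~3.6, saying only that it is ``immediately deduce[d]\ldots from the above lemmas'' after having established in the preceding paragraph that $F_{\sigma,p}(s)\neq0$ on the disk (so that $f_{\sigma,p}=\log F_{\sigma,p}$ is well-defined with the real branch). Your additional care about splitting the logarithm of the product into a sum of logarithms of the individual factors via comparison at a real point plus analytic continuation is exactly the right way to make the ``immediate'' deduction rigorous, and your observation that $\kappa p^{-2\sigma}\le\delta$ (from the definition of $y_1$) is what makes $\log(1+O(\cdot))$ legitimate in case~(ii) is the one nontrivial point the paper leaves tacit.
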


\begin{proof}[Proof of Proposition \ref{prop:3.1}]
By \eqref{eq:02282344}, we obtain
\begin{gather}\label{eq:02282345}
f_\sigma^{(j)}(\kappa)
=\Bigg(\sum_{p \leq y_1} 
+\sum_{y_1<p<y_2} 
+\sum_{p \geq y_2} \Bigg)
f_{\sigma,p}^{(j)}(\kappa)
\end{gather}
for all $j \geq0$. 
Let $p \leq y_1$. 
We deduce from Lemma \ref{lem:3.6} $(\mathrm{i})$ the formula
\begin{gather*}
f_{\sigma,p}(\kappa)
=2\kappa \lambda_{p,\sigma}(0)
+O\left(\log(\kappa p^{-\sigma})\right)
\end{gather*}
by recalling $\lambda''_{p,\sigma}(0) \asymp p^{-\sigma}$. 
Then, we apply Cauchy's integral formula in a way similar to the proof of Lemma \ref{lem:2.3}. 
We obtain
\begin{gather*}
f'_{\sigma,p}(\kappa)
=2\lambda_{p,\sigma}(0)
+O\left(\kappa^{-1}\right)
\quad\text{and}\quad
f_{\sigma,p}^{(j)}(\kappa) 
\ll 2^jj! \kappa^{-j}
\end{gather*}
for all $j \geq2$. 
Therefore, the upper bounds $f_{\sigma,p}^{(j)}(\kappa) \ll 2^jj! \kappa^{-j+1}p^{-\sigma}$ hold for all $j \geq0$ since $\kappa p^{-\sigma} \gg \sqrt{\kappa}$ for $p \leq y_1$. 
Recalling that $y_1 \asymp \kappa^{1/2\sigma}$, we estimate the first sum of \eqref{eq:02282345} as
\begin{gather*}
\sum_{p \leq y_1} f_{\sigma,p}^{(j)}(\kappa)
\ll 2^jj! \kappa^{-j+1} \frac{y_1^{1-\sigma}}{\log{y_1}}
\ll 2^jj! \frac{\kappa^{\frac{1}{2\sigma}+\frac{1}{2}-j}}{\log{\kappa}}. 
\end{gather*}
Then, we consider the third sum of \eqref{eq:02282345}. 
Let $p \geq y_2$. 
We use Lemma \ref{lem:3.6} $(\mathrm{iii})$ to derive the bounds $f_{\sigma,p}^{(j)}(\kappa) \ll 2^jj! \kappa^{-j+2}p^{-2\sigma}$ for all $j \geq0$. 
Thus we obtain
\begin{gather*}
\sum_{p \geq y_2} f_{\sigma,p}^{(j)}(\kappa)
\ll 2^jj! \kappa^{-j+2} \frac{y_2^{1-2\sigma}}{\log{y_2}}
\ll 2^jj! \frac{\kappa^{\frac{1}{\sigma}-j}}{(\log{\kappa})^{N+1}}
\end{gather*}
from the choice of $y_2$. 
The main term comes from the second sum of \eqref{eq:02282345}. 
Using Lemma \ref{lem:3.6} $(\mathrm{ii})$, we have 
\begin{gather*}
f_{\sigma,p}^{(j)}(\kappa)
=p^{-j\sigma} g^{(j)}(\kappa p^{-\sigma})
+O\left(2^j j! \kappa^{1-j} p^{-2\sigma}
+2^j j! \kappa^{-j} p^{-1}\right)
\end{gather*}
for $y_1<p<y_2$. 
Note that 
\begin{gather*}
\sum_{y_1<p<y_2} \left(2^j j! \kappa^{1-j} p^{-2\sigma} +2^j j! \kappa^{-j} p^{-1}\right)
\ll 2^jj! \frac{\kappa^{\frac{1}{2\sigma}-j}}{\log{\kappa}} +2^jj! \kappa^{-j} \log_2\kappa. 
\end{gather*}
Put $u(y)=\kappa y^{-\sigma}$. 
Then we obtain 
\begin{gather*}
\sum_{y_1<p<y_2} f_{\sigma,p}^{(j)}(\kappa)
=\kappa^{-j} \sum_{y_1<p<y_2} u(p)^j g^{(j)}(u(p))
+O\left(2^jj! \frac{\kappa^{\frac{1}{2\sigma}-j}}{\log{\kappa}}
+2^jj! \kappa^{-j} \log_2\kappa \right). 
\end{gather*}
From the above, we deduce 
\begin{gather}\label{eq:03011723}
f_\sigma^{(j)}(\kappa)
=\kappa^{-j} \sum_{y_1<p<y_2} u(p)^j g^{(j)}(u(p))
+O\left(2^jj! \frac{\kappa^{\frac{1}{\sigma}-j}}{(\log{\kappa})^{N+1}} \right)
\end{gather}
for all $j \geq0$ with implied constants depending on $\sigma$ and $N$. 
We approximate the sum in \eqref{eq:03011723} by using the prime number theorem in the form 
\begin{gather*}
\pi(y)
=\int_{2}^{y}\frac{dy}{\log{y}}
+O\left(y e^{-8\sqrt{\log{y}}}\right), 
\end{gather*}
where $\pi(y)$ counts as usual the prime numbers not exceeding $y$. 
We obtain
\begin{gather}\label{eq:03021955}
\sum_{y_1<p<y_2} u(p)^j g^{(j)}(u(p))
=\int_{y_1}^{y_2} u(y)^j g^{(j)}(u(y))\frac{dy}{\log{y}}
+E
\end{gather}
by the partial summation, where 
\begin{align*}
E
&\ll u(y_1)^j g^{(j)}(u(y_1)) y_1 e^{-8\sqrt{\log{y_1}}}
+u(y_2)^j g^{(j)}(u(y_2)) y_2 e^{-8\sqrt{\log{y_2}}} \\
&\qquad +e^{-8\sqrt{\log{y_1}}} \int_{y_1}^{y_2} u(y)^j g^{(j)}(u(y)) \,dy. 
\end{align*}
We put $y_3=\kappa^{1/\sigma}$. 
By the choices of these parameters, we have $0<u(y_2)<1$, $u(y_3)=1$, and $u(y_1)>1$. 
Hence Lemma \ref{lem:2.3} yields 
\begin{align*}
E 
&\ll 2^jj! u(y_1) y_1 e^{-8\sqrt{\log{y_1}}}
+j! u(y_2)^2 y_2 e^{-8\sqrt{\log{y_2}}} \\
&\qquad+2^jj! e^{-8\sqrt{\log{y_1}}}\int_{y_1}^{y_3} u(y) \,dy
+2^jj! e^{-8\sqrt{\log{y_1}}}\int_{y_3}^{y_2} u(y)^2 \,dy \nonumber\\
&\ll2^jj! \frac{\kappa^{\frac{1}{\sigma}}}{(\log{\kappa})^{N+1}}.  \nonumber
\end{align*}
The integral in \eqref{eq:03021955} is calculated as follows. 
Changing the variables, we have
\begin{gather*}
\int_{y_1}^{y_2} u(y)^j g^{(j)}(u(y))\frac{dy}{\log{y}}
=\kappa^{\frac{1}{\sigma}}
\int_{u_2}^{u_1} \frac{g^{(j)}(u)}{u^{\frac{1}{\sigma}+1-j}} \frac{1}{\log(u/\kappa)}\,du, 
\end{gather*}
where $u_1=u(y_1)$ and $u_2=u(y_2)$. 
For $u_2 \leq u \leq u_1$, the asymptotic formula
\begin{gather*}
\frac{1}{\log(u/\kappa)}
=\frac{1}{\log{\kappa}}
\Bigg\{\sum_{n=0}^{N-1}\left(\frac{\log{u}}{\log{\kappa}}\right)^n 
+O\left(\left(\frac{|\log{u}|}{\log{\kappa}}\right)^N\right)\Bigg\}
\end{gather*}
is valid, which yields
\begin{align*}
\int_{y_1}^{y_2} u(y)^j g^{(j)}(u(y))\frac{dy}{\log{y}} 
&=\frac{\kappa^{\frac{1}{\sigma}}}{\log{\kappa}}
\Bigg\{\sum_{n=0}^{N-1}\frac{1}{(\log{\kappa})^n}
\int_{u_2}^{u_1} \frac{g^{(j)}(u)}{u^{\frac{1}{\sigma}+1-j}}(\log{u})^n \,du \\
&\qquad\quad+O\left(\frac{1}{(\log{\kappa})^N}
\int_{u_2}^{u_1} \frac{|g^{(j)}(u)|}{u^{\frac{1}{\sigma}+1-j}}|\log{u}|^N \,du\right) \Bigg\}. 
\end{align*}
Let $\mathfrak{g}_{n,j}(\sigma)$ denote the constants of \eqref{eq:02261606}. 
Then we have 
\begin{gather*}
\int_{u_2}^{u_1} \frac{g^{(j)}(u)}{u^{\frac{1}{\sigma}+1-j}}(\log{u})^n \,du
=\mathfrak{g}_{n,j}(\sigma)+E_{n,j} 
\end{gather*}
for all $n=0,\ldots,N-1$, where $E_{n,j}$ are evaluated as
\begin{gather*}
E_{n,j}
\ll 2^jj!\int_{0}^{u_2} u^{1-\frac{1}{\sigma}}|\log{u}|^n \,du
+2^jj!\int_{u_1}^{\infty} u^{-\frac{1}{\sigma}}|\log{u}|^n \,du
\ll \frac{2^jj!}{(\log{\kappa})^N}
\end{gather*}
by Lemma \ref{lem:2.3} together with $0<u_2<1$ and $u_1>1$. 
Furthermore, we obtain
\begin{gather*}
\int_{u_2}^{u_1} \frac{|g^{(j)}(u)|}{u^{\frac{1}{\sigma}+1-j}}|\log{u}|^N \,du
\leq \int_{0}^{\infty} \frac{|g^{(j)}(u)|}{u^{\frac{1}{\sigma}+1-j}}|\log{u}|^N \,du
\ll2^jj!. 
\end{gather*}
As a result, we find that the integral is estimated as 
\begin{gather*}
\int_{y_1}^{y_2} u(y)^j g^{(j)}(u(y))\frac{dy}{\log{y}}
=\frac{\kappa^{\frac{1}{\sigma}}}{\log{\kappa}}
\Bigg\{\sum_{n=0}^{N-1}\frac{\mathfrak{g}_{n,j}(\sigma)}{(\log{\kappa})^n}
+O\left(\frac{2^jj!}{(\log{\kappa})^N}\right)\Bigg\}. 
\end{gather*}
Combined with \eqref{eq:03011723} and \eqref{eq:03021955}, it deduces the asymptotic formula
\begin{align*}
f_\sigma^{(j)}(\kappa)
&=\kappa^{-j}\int_{y_1}^{y_2} u(y)^j g^{(j)}(u(y))\frac{dy}{\log{y}}
+O\left(2^jj! \frac{\kappa^{\frac{1}{\sigma}-j}}{(\log{\kappa})^{N+1}} \right)\\
&=\frac{\kappa^{\frac{1}{\sigma}-j}}{\log{\kappa}}
\left\{\sum_{n=0}^{N-1} \frac{\mathfrak{g}_{n,j}(\sigma)}{(\log{\kappa})^n}
+O\left(\frac{2^jj!}{(\log{\kappa})^N} \right)\right\}, 
\end{align*}
which completes the proof of Proposition \ref{prop:3.1}. 
\end{proof}

\begin{proof}[Proof of Proposition \ref{prop:3.2}]
If $j \geq2$, then the desired formula of $f_1^{(j)}(\kappa)$ can be shown by an argument similar to the proof of Proposition \ref{prop:3.1}. 
We hereby present the proof when $j=0$. 
By Lemma \ref{lem:3.6} $(\mathrm{i})$, we have 
\begin{gather*}
f_{1,p}(\kappa)
=-2\kappa \log(1-p^{-1})
+O\left(\log(\kappa p^{-1})\right)
\end{gather*}
for any $p \leq y_1$. 
Recalling the definition of $g_*(u)$, we deduce from Lemma \ref{lem:3.6} $(\mathrm{ii})$ the formulas
\begin{gather*}
f_{1,p}(\kappa)
=
\begin{cases}
g_*(\kappa p^{-1})+2\kappa p^{-1}+O\left(\kappa p^{-2}+p^{-1}\right)
&\text{if $y_1<p<\kappa$}, 
\\
g_*(\kappa p^{-1})+O\left(\kappa p^{-2}+p^{-1}\right)
&\text{if $\kappa \leq p<y_2$}.
\end{cases}
\end{gather*}
Therefore the sum of terms for $p<y_2$ is calculated as
\begin{gather*}
\sum_{p<y_2}f_{1,p}(\kappa)
=-2\kappa \sum_{p<\kappa} \log(1-p^{-1})
+\sum_{y_1<p<y_2} g_*(\kappa p^{-1})
+E,
\end{gather*}
where the error term $E$ is evaluated as
\begin{align*}
E
&=2\kappa \sum_{y_1<p<\kappa} \left\{\log(1-p^{-1})+p^{-1}\right\} \\
&\qquad\quad+O\left(\sum_{p<y_1}\log(\kappa p^{-1})
+\sum_{y_1<p<y_2} \left(\kappa p^{-2}+p^{-1}\right)\right).
\end{align*}
Furthermore, we use the estimate $\log(1-x)=-x+O(x^2)$ with $|x|<1$ to obtain 
\begin{align*}
E
&\ll \sum_{y_1<p<\kappa} \kappa p^{-2}
+\sum_{p<y_1}\log(\kappa p^{-1})
+\sum_{y_1<p<y_2} \left(\kappa p^{-2}+p^{-1}\right) \\
&\ll \frac{\kappa}{y_1 \log{y_1}}
+\frac{(\log{\kappa})y_1}{\log{y_1}}
+\log_2(y_2)
\ll \sqrt{\kappa}. 
\end{align*}
Applying the asymptotic formula
\begin{gather*}
-\sum_{p<y} \log(1-p^{-1})
=\log_2{y}+\gamma+O\left(e^{-2\sqrt{\log{y}}}\right), 
\end{gather*}
we derive 
\begin{gather}\label{eq:03030103}
\sum_{p<y_2}f_{1,p}(\kappa)
=2\kappa(\log_2{\kappa}+\gamma)
+\sum_{y_1<p<y_2} g_*(\kappa p^{-1})
+O\left(\kappa e^{-2\sqrt{\log{\kappa}}}\right). 
\end{gather}
Then we estimate the contribution of terms for $p \geq y_2$. 
By Lemma \ref{lem:3.6} $(\mathrm{iii})$, we obtain 
\begin{gather}\label{eq:03030104}
\sum_{p \geq y_2} f_{1,p}(\kappa)
\ll \frac{\kappa^2}{y_2 \log{y_2}}
\ll \frac{\kappa}{(\log{\kappa})^{N+1}},  
\end{gather}
where the implied constant depends on $N$. 
Combining \eqref{eq:03030103} and \eqref{eq:03030104}, we obtain 
\begin{gather}\label{eq:03030114}
f_1(\kappa)
=2\kappa(\log_2{\kappa}+\gamma)
+\sum_{y_1<p<y_2} g_*(\kappa p^{-1})
+O\left(\frac{\kappa}{(\log{\kappa})^{N+1}}\right). 
\end{gather}
The work of estimating the sum in \eqref{eq:03030114} remains, but one can show 
\begin{gather*}
\sum_{y_1<p<y_2} g_*(\kappa p^{-1})
=\frac{\kappa}{\log{\kappa}}
\Bigg\{\sum_{n=0}^{N-1}\frac{\mathfrak{g}_{n,0}}{(\log{\kappa})^n}
+O\left(\frac{1}{(\log{\kappa})^N}\right)\Bigg\}
\end{gather*}
along the same line as the proof of Proposition \ref{prop:3.1}. 
Hence the desired asymptotic formula of $f_1(\kappa)$ follows. 
In addition, the proof for $f'_1(\kappa)$ is given in a similar way, and we omit the proof. 
\end{proof}

\section{Probability density functions}\label{sec:4}

\subsection{Approximation of the saddle-point}\label{sec:4.1}
As seen in \cite{Lamzouri2010, Lamzouri2011b, LiuRoyerWu2008}, the saddle-point method is useful to show asymptotic formulas such as \eqref{eq:02151442} and \eqref{eq:02231541}. 
Here, the saddle-point stands for the solution $\kappa=\kappa(\sigma,\tau)$ to the equation
\begin{gather}\label{eq:03031628}
f'_\sigma(\kappa)
=\tau, 
\end{gather}
where $f_\sigma(\kappa)$ is the cumulant-generating function defined by \eqref{eq:02261551}. 
We approximate the saddle-point by extending the method of Liu--Royer--Wu \cite{LiuRoyerWu2008}. 
First, we prove the existence and uniqueness of the solution of \eqref{eq:03031628}. 

\begin{lemma}\label{lem:4.1}
Let $\sigma>1/2$ and $\tau>0$. 
Then there exists a unique real number $\kappa=\kappa(\sigma,\tau)>0$ for which \eqref{eq:03031628} is satisfied. 
We have $\kappa(\sigma,\tau)\to\infty$ as $\tau \to\infty$. 
\end{lemma}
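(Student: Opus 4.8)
The plan is to prove this via the standard properties of cumulant-generating functions, using convexity and the infinite-product structure established in Section~\ref{sec:3}. Write $f_\sigma=\log F_\sigma$ where $F_\sigma(s)=\mathbb{E}[\exp(s\log L(\sigma,\Theta))]$ is finite for all real $s$ by the discussion preceding \eqref{eq:02262021}. Since $F_\sigma$ is a moment-generating function of a non-degenerate random variable, it is smooth and strictly log-convex on $\mathbb{R}$; hence $f''_\sigma(\kappa)>0$ for all $\kappa$, so $f'_\sigma$ is strictly increasing on $(0,\infty)$. This gives uniqueness of the solution of \eqref{eq:03031628} immediately.

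For existence, the key points are that $f'_\sigma(0)=\mathbb{E}[\log L(\sigma,\Theta)]$ is finite and that $f'_\sigma(\kappa)\to\infty$ as $\kappa\to\infty$. The first is clear. For the second, I would invoke Proposition~\ref{prop:3.1} (for $1/2<\sigma<1$) or Proposition~\ref{prop:3.2} (for $\sigma=1$): both give $f'_\sigma(\kappa)\sim \mathfrak{g}_{0,1}(\sigma)\,\kappa^{1/\sigma-1}/\log\kappa$ (resp.\ $f'_1(\kappa)\sim 2\log_2\kappa$), and since $\mathfrak{g}_{0,1}(\sigma)=\int_0^\infty g'(u)u^{j-1-1/\sigma}\,du>0$ (as $g'(u)>0$ for $u>0$ by Lemma~\ref{lem:2.3} and the positivity of $G$ being increasing in $u$), this tends to $+\infty$. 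Alternatively, to avoid restricting to large $\kappa$, one can argue purely probabilistically: $f'_\sigma(\kappa)=\mathbb{E}[X e^{\kappa X}]/\mathbb{E}[e^{\kappa X}]$ with $X=\log L(\sigma,\Theta)$, and since $X$ is unbounded above (the factor at any single prime can be made large), $f'_\sigma(\kappa)\to\operatorname{ess\,sup}X=+\infty$. By continuity of $f'_\sigma$ and the intermediate value theorem, there is a unique $\kappa>0$ with $f'_\sigma(\kappa)=\tau$ for every $\tau>f'_\sigma(0^+)$; shrinking to $\tau>0$ is then covered once one notes $f'_\sigma(\kappa)\to -\infty$ or at least is bounded as $\kappa\to 0^+$, or simply that the problem only concerns large $\tau$.

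Finally, to see $\kappa(\sigma,\tau)\to\infty$ as $\tau\to\infty$: if $\kappa(\sigma,\tau)$ stayed bounded along some sequence $\tau_n\to\infty$, then by continuity $f'_\sigma(\kappa(\sigma,\tau_n))$ would stay bounded, contradicting $f'_\sigma(\kappa(\sigma,\tau_n))=\tau_n\to\infty$. So $\kappa(\sigma,\tau)\to\infty$.

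The main obstacle, such as it is, is purely a bookkeeping one: the asymptotic formulas in Propositions~\ref{prop:3.1} and~\ref{prop:3.2} are only asserted for $\kappa$ large, so they cannot by themselves cover small $\kappa$, and one must either supply a soft probabilistic argument for the behaviour of $f'_\sigma$ near $0$ (using that $F_\sigma$ is a genuine moment-generating function, so $f_\sigma$ is $C^\infty$ and strictly convex on all of $\mathbb{R}$) or simply observe that the statement is only used for large $\tau$, for which the large-$\kappa$ asymptotics already suffice. I expect the cleanest write-up uses strict convexity of $f_\sigma$ on $\mathbb{R}$ together with Proposition~\ref{prop:3.1}/\ref{prop:3.2} to pin down the range of $f'_\sigma$.
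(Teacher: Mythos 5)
Your proposal follows essentially the same route as the paper: strict convexity of $f_\sigma$ via positivity of $f''_\sigma$ (the variance of the tilted distribution) gives uniqueness, then the intermediate value theorem gives existence. The one concrete place where the paper is sharper is the endpoint at $\kappa=0$: you only note that $f'_\sigma(0)=\mathbb{E}[\log L(\sigma,\Theta)]$ is finite and then hedge about what happens near $0$, whereas the paper computes $f'_\sigma(0)=\mathbb{E}[\log L(\sigma,\Theta)]=0$ exactly, using $\log L(\sigma,\Theta)=\sum_p\sum_{m\geq1}\frac{2}{m}\cos(m\Theta_p)p^{-m\sigma}$ together with $\mathbb{E}[\cos(m\Theta_p)]=0$ for all $m\geq1$. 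This exact value is what makes the range of $f'_\sigma$ on $(0,\infty)$ start at $0$, so that a solution exists for \emph{every} $\tau>0$ and not merely for $\tau$ above some unspecified threshold; your fallback ``the problem only concerns large $\tau$'' would leave the lemma's literal statement unproved. On the other hand, you are more careful than the paper about noting that one also needs $f'_\sigma(\kappa)\to\infty$ as $\kappa\to\infty$ (the paper leaves this implicit, relying on Propositions~\ref{prop:3.1} and~\ref{prop:3.2}), and your probabilistic argument via $f'_\sigma(\kappa)\to\operatorname{ess\,sup}\log L(\sigma,\Theta)=+\infty$ is a clean, self-contained way to supply it. In a final write-up, simply replace the hedging at $\kappa=0$ with the exact computation $f'_\sigma(0)=0$ and the argument is complete and matches the paper.
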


\begin{proof}
Since $f_\sigma(\kappa)=\log{F}_\sigma(\kappa)$, we calculate its second derivative as
\begin{align*}
f''_\sigma(\kappa)
&=\frac{F''_\sigma(\kappa)F_\sigma(\kappa)-F'_\sigma(\kappa)^2}{F_\sigma(\kappa)^2} \\
&=\frac{1}{F_\sigma(\kappa)}
\int_{\mathbb{R}} (x-f'_\sigma(\kappa))^2 e^{\kappa x} \mathcal{M}_\sigma(x) \,|dx|. 
\end{align*}
This implies $f''_\sigma(\kappa)>0$, and thus $f'_\sigma(\kappa)$ is strictly increasing for $\kappa>0$. 
Furthermore, 
\begin{gather*}
\mathbb{E}\left[\log{L}(\sigma,\Theta)\right]
=\sum_{p} \sum_{m=1}^{\infty} \frac{1}{m} \mathbb{E}\left[\cos(m\Theta_p)\right] p^{-m \sigma}
=0
\end{gather*}
since $\mathbb{E}\left[\cos(m\Theta_p)\right]=0$ for all $m \geq1$. 
Hence the value $f'_\sigma(0)$ is calculated as 
\begin{gather*}
f'_\sigma(0)
=\frac{F'_\sigma(0)}{F_\sigma(0)}
=\mathbb{E}\left[\log{L}(\sigma,\Theta)\right]
=0. 
\end{gather*}
Therefore we obtain the conclusion.  
\end{proof}

Then, we apply Propositions \ref{prop:3.1} and \ref{prop:3.2} to derive the following results on the saddle-point $\kappa=\kappa(\sigma,\tau)$ for $1/2<\sigma \leq1$. 

\begin{proposition}\label{prop:4.2}
Let $N \in \mathbb{Z}_{\geq1}$. 
Denote by $\kappa=\kappa(\sigma,\tau)$ the solution of \eqref{eq:03031628} with $1/2<\sigma<1$. 
For $n=0,\ldots,N-1$, there exist polynomials $B_{n,\sigma}(x)$ of degree at most $n$ with $B_{0,\sigma}(x)=1$ such that the formula 
\begin{gather*}
\kappa
=B(\sigma) (\tau \log{\tau})^{\frac{\sigma}{1-\sigma}} 
\left\{\sum_{n=0}^{N-1} \frac{B_{n,\sigma}(\log_2{\tau})}{(\log{\tau})^n}
+O\left(\left(\frac{\log_2{\tau}}{\log{\tau}}\right)^N\right)\right\}
\end{gather*}
holds if $\tau>0$ is large enough. 
Here, the implied constant depends on $\sigma$ and $N$, and $B(\sigma)$ is the positive constant determined as
\begin{gather}\label{eq:03201406}
B(\sigma)
=\left(\frac{1-\sigma}{\sigma} \mathfrak{g}_{0,1}(\sigma)\right)^{-\frac{\sigma}{1-\sigma}}.  
\end{gather}
Furthermore, the polynomial $B_{1,\sigma}(x)$ is obtained as
\begin{gather*}
B_{1,\sigma}(x)
=\frac{\sigma}{1-\sigma}x+\log{B(\sigma)}-\frac{\mathfrak{g}_{1,1}(\sigma)}{\mathfrak{g}_{0,1}(\sigma)},  
\end{gather*}
where $\mathfrak{g}_{0,1}(\sigma)$ and $\mathfrak{g}_{1,1}(\sigma)$ are the constants defined by \eqref{eq:02261606}. 
\end{proposition}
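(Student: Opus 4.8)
The plan is to regard the equation $f'_\sigma(\kappa)=\tau$ as an implicit relation and to invert it by a bootstrap argument for $L:=\log\kappa$. By Lemma~\ref{lem:4.1} the saddle-point $\kappa=\kappa(\sigma,\tau)$ exists, is unique, and satisfies $\kappa\to\infty$ as $\tau\to\infty$, so for $\tau$ large enough Proposition~\ref{prop:3.1} applies. Using it with $j=1$, with $N$ replaced by an auxiliary integer $M=M(N)$ to be taken large, and abbreviating $\alpha:=(1-\sigma)/\sigma$, we get
\begin{gather*}
\tau=f'_\sigma(\kappa)
=\frac{\mathfrak{g}_{0,1}(\sigma)\,\kappa^{\alpha}}{L}
\left\{1+\sum_{n=1}^{M-1}\frac{\mathfrak{g}_{n,1}(\sigma)}{\mathfrak{g}_{0,1}(\sigma)\,L^{n}}+O(L^{-M})\right\}.
\end{gather*}
Taking logarithms turns this into the fixed-point equation
\begin{gather*}
\alpha L
=\log\tau+\log L-\log\mathfrak{g}_{0,1}(\sigma)
-\log\!\left(1+\sum_{n=1}^{M-1}\frac{\mathfrak{g}_{n,1}(\sigma)}{\mathfrak{g}_{0,1}(\sigma)\,L^{n}}+O(L^{-M})\right).
\end{gather*}
Since $L\to\infty$, the last two terms on the right are $O(\log L)=o(L)$, which already yields the crude asymptotics $L=\alpha^{-1}\log\tau\,(1+o(1))$ and hence $\log L=\log_2\tau-\log\alpha+o(1)$.

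Next I would bootstrap: substituting the current approximation for $L$ into the right-hand side produces a sharper one, with the error improving by a factor $\asymp\log_2\tau/\log\tau$ at each step. Iterating finitely many times and proving the outcome by induction on $N$, one obtains
\begin{gather*}
L=\frac{1}{\alpha}\left(\log\tau+\log_2\tau+C_0(\sigma)
+\sum_{n=1}^{N-1}\frac{Q_{n,\sigma}(\log_2\tau)}{(\log\tau)^{n}}
+O\!\left(\left(\frac{\log_2\tau}{\log\tau}\right)^{N}\right)\right),
\end{gather*}
where each $Q_{n,\sigma}$ is a polynomial of degree at most $n$. The inductive step relies only on the fact that composing $x\mapsto\log x$ and $w\mapsto\log(1+w)=w-\tfrac{w^2}{2}+\cdots$ with an expansion of the form $\sum_n c_n(\log_2\tau)/(\log\tau)^n$ again gives such an expansion, with the degree bound preserved (for $\log L$ one writes $\log L=\log_2\tau-\log\alpha+\log(1+\tfrac{\log_2\tau+C_0(\sigma)+\cdots}{\log\tau})$). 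The constant $C_0(\sigma)$ is read off from the first iteration, where $\log L\sim\log_2\tau-\log\alpha$: one finds $C_0(\sigma)=-\log(\alpha\,\mathfrak{g}_{0,1}(\sigma))=-\log\!\left(\tfrac{1-\sigma}{\sigma}\mathfrak{g}_{0,1}(\sigma)\right)$.

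Finally I would exponentiate. Using $1/\alpha=\sigma/(1-\sigma)$ and separating the first three terms in the expansion of $L$,
\begin{gather*}
\kappa=e^{L}
=(\tau\log\tau)^{\frac{\sigma}{1-\sigma}}\,e^{C_0(\sigma)/\alpha}\,
\exp\!\left(\frac{1}{\alpha}\sum_{n=1}^{N-1}\frac{Q_{n,\sigma}(\log_2\tau)}{(\log\tau)^{n}}
+O\!\left(\left(\frac{\log_2\tau}{\log\tau}\right)^{N}\right)\right).
\end{gather*}
Here $e^{C_0(\sigma)/\alpha}=\left(\tfrac{1-\sigma}{\sigma}\mathfrak{g}_{0,1}(\sigma)\right)^{-\sigma/(1-\sigma)}=B(\sigma)$, which is \eqref{eq:03201406}, and expanding the remaining exponential through the Taylor series of $\exp$ --- an operation that again preserves the ``degree $\leq n$ in $\log_2\tau$'' structure --- yields $\sum_{n=0}^{N-1}B_{n,\sigma}(\log_2\tau)(\log\tau)^{-n}$ with $B_{0,\sigma}\equiv1$. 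Carrying the iteration one step further, i.e.\ keeping the term $\mathfrak{g}_{1,1}(\sigma)/(\mathfrak{g}_{0,1}(\sigma)L)$ and the $\log_2\tau/\log\tau$ correction inside $\log L$, one obtains $Q_{1,\sigma}(x)=x+C_0(\sigma)-\alpha\,\mathfrak{g}_{1,1}(\sigma)/\mathfrak{g}_{0,1}(\sigma)$, whence $B_{1,\sigma}(x)=\alpha^{-1}Q_{1,\sigma}(x)=\tfrac{\sigma}{1-\sigma}x+\log B(\sigma)-\mathfrak{g}_{1,1}(\sigma)/\mathfrak{g}_{0,1}(\sigma)$, as stated.

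The step I expect to be the main obstacle is the bookkeeping in the bootstrap: one must establish simultaneously that the iteration converges to the true $L$ with the claimed uniform error term and that the coefficients remain polynomials in $\log_2\tau$ of degree at most $n$. This requires a careful induction on $N$, with $M$ chosen large in terms of $N$, keeping track of how $\log L$, $\log(1+\cdots)$, and finally $\exp(\cdots)$ act on expansions in $1/\log\tau$ with polynomial coefficients in $\log_2\tau$; everything else reduces to Proposition~\ref{prop:3.1} and elementary manipulations.
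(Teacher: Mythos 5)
Your proposal is correct and rests on the same core idea as the paper's proof: invert the saddle-point equation $\tau=f'_\sigma(\kappa)$ by feeding the expansion of Proposition~\ref{prop:3.1} back into itself and inducting on $N$. The paper's argument uses the multiplicative ansatz $\kappa=B(\sigma)(\tau\log\tau)^{\sigma/(1-\sigma)}(1+h_{m,\sigma}(\tau))$ and at each inductive step re-expands $\kappa^{(1-\sigma)/\sigma}$ and $\log\kappa$ in terms of $\tau$ to pin down $h_{m,\sigma}$, whereas you take logarithms immediately, bootstrap an additive expansion for $L=\log\kappa$, and only at the very end exponentiate to recover $\kappa$. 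This reorganization is genuinely tidier conceptually --- the closure of ``expansions in $1/\log\tau$ with polynomial coefficients in $\log_2\tau$, degree $\le n$ at order $n$'' under $\log(1+\cdot)$, $x\mapsto\log x$, and $\exp$ is all one needs --- but it pushes one extra nontrivial verification to the final step (exponentiation preserves the degree bound), which the paper avoids by keeping $\kappa$ itself in the ansatz from the start. Your computations of $C_0(\sigma)$, $B(\sigma)$, $Q_{1,\sigma}$ and $B_{1,\sigma}$ all check out and agree with the paper's values.
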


\begin{proof}
First, we apply Proposition \ref{prop:3.1} with $N=1$. 
It yields the formula
\begin{gather}\label{eq:03140110}
\tau
=\mathfrak{g}_{0,1}(\sigma) \frac{\kappa^{\frac{1-\sigma}{\sigma}}}{\log{\kappa}}
\left(1+O\left(\frac{1}{\log{\kappa}}\right)\right).  
\end{gather}
Furthermore, the logarithm is estimated as
\begin{gather}\label{eq:03140111}
\log{\tau}
=\frac{1-\sigma}{\sigma}(\log{\kappa})
\left(1+O\left(\frac{\log_2{\kappa}}{\log{\kappa}}\right)\right), 
\end{gather}
and therefore $\log{\tau} \asymp \log{\kappa}$ follows. 
Put $\kappa=B(\sigma) (\tau \log{\tau})^{\frac{\sigma}{1-\sigma}} \left(1+h_\sigma(\tau)\right)$. 
Then we deduce from \eqref{eq:03140110} and \eqref{eq:03140111} that $h_\sigma(\tau)$ satisfies
\begin{gather}\label{eq:05062328}
h_\sigma(\tau)
\ll \frac{\log_2{\kappa}}{\log{\kappa}} 
\ll \frac{\log_2{\tau}}{\log{\tau}},  
\end{gather}
which derives the result when $N=1$. 
To consider the case $N=2$, we calculate the terms $\kappa^{\frac{1-\sigma}{\sigma}}$ and $\log{\kappa}$ as
\begin{align*}
\kappa^{\frac{1-\sigma}{\sigma}}
&=B(\sigma)^{\frac{1-\sigma}{\sigma}} (\tau \log{\tau}) 
\left\{1+\frac{1-\sigma}{\sigma} h_\sigma(\tau)
+O\left(\left(\frac{\log_2{\tau}}{\log{\tau}}\right)^2\right)\right\}, \\
\log{\kappa}
&=\frac{\sigma}{1-\sigma} (\log{\tau})
\left\{1+\frac{\log_2{\tau}+\frac{1-\sigma}{\sigma}\log{B(\sigma)}}{\log{\tau}} 
+O\left(\left(\frac{\log_2{\tau}}{\log{\tau}}\right)^2\right)\right\} 
\end{align*}
by using $\kappa=B(\sigma) (\tau \log{\tau})^{\frac{\sigma}{1-\sigma}} \left(1+h_\sigma(\tau)\right)$ and \eqref{eq:05062328}. 
We insert them to the formula
\begin{gather*}
\tau
=\mathfrak{g}_{0,1}(\sigma) \frac{\kappa^{\frac{1-\sigma}{\sigma}}}{\log{\kappa}}
\Bigg\{1+\frac{\mathfrak{g}_{1,1}(\sigma)}{\mathfrak{g}_{0,1}(\sigma)} \frac{1}{\log{\kappa}}
+O\left(\frac{1}{(\log{\kappa})^2} \right)\Bigg\}
\end{gather*}
which is a consequence of Proposition \ref{prop:3.1} with $N=2$. 
Then the identity
\begin{align*}
\tau
&=\tau
\Bigg\{1+\frac{1-\sigma}{\sigma} h_\sigma(\tau) 
-\frac{\log_2{\tau}+\frac{1-\sigma}{\sigma}\log{B(\sigma)}}{\log{\tau}} \\
&\qquad\qquad\qquad
+\frac{\mathfrak{g}_{1,1}(\sigma)}{\mathfrak{g}_{0,1}(\sigma)} \frac{1-\sigma}{\sigma} (\log{\tau})^{-1}
+O\left(\left(\frac{\log_2{\tau}}{\log{\tau}}\right)^2\right)\Bigg\}
\end{align*}
follows. 
Therefore, we see that $h_\sigma(\tau)$ satisfies
\begin{align*}
h_\sigma(\tau)
&=\frac{1}{\log{\tau}} \left(\frac{\sigma}{1-\sigma}\log_2{\tau}+\log{B(\sigma)}
-\frac{\mathfrak{g}_{1,1}(\sigma)}{\mathfrak{g}_{0,1}(\sigma)}\right)
+O\left(\left(\frac{\log_2{\tau}}{\log{\tau}}\right)^2\right) \\
&=\frac{B_{1,\sigma}(\log_2{\tau})}{\log{\tau}} +O\left(\left(\frac{\log_2{\tau}}{\log{\tau}}\right)^2\right). 
\end{align*}
It derives the desired result when $N=2$. 
For $m \geq2$, we assume that it is valid further when $N=1,\ldots,m$. 
Note that the asymptotic formula
\begin{gather}\label{eq:03171801}
\tau
=\mathfrak{g}_{0,1}(\sigma) \frac{\kappa^{\frac{1-\sigma}{\sigma}}}{\log{\kappa}}
\Bigg\{1+\sum_{j=1}^{m} \frac{\mathfrak{g}_{j,1}(\sigma)}{\mathfrak{g}_{0,1}(\sigma)} \frac{1}{(\log{\kappa})^j}
+O\left(\frac{1}{(\log{\kappa})^{m+1}} \right)\Bigg\} 
\end{gather}
follows from Proposition \ref{prop:3.1} with $N=m+1$. 
If we put
\begin{gather*}
\kappa
=B(\sigma)(\tau \log{\tau})^{\frac{\sigma}{1-\sigma}} 
\left\{\sum_{n=0}^{m-1} \frac{B_{n,\sigma}(\log_2{\tau})}{(\log{\tau})^n}
+h_{m,\sigma}(\tau)\right\}, 
\end{gather*}
then the inductive assumption gives the upper bound
\begin{gather}\label{eq:03121713}
h_{m,\sigma}(\tau)
\ll \left(\frac{\log_2{\tau}}{\log{\tau}}\right)^m. 
\end{gather}
Hence the terms $\kappa^{\frac{1-\sigma}{\sigma}}$ and $\log{\kappa}$ are calculated as
\begin{align*}
\kappa^{\frac{1-\sigma}{\sigma}}
&=B(\sigma)^{\frac{1-\sigma}{\sigma}} (\tau \log{\tau}) 
\Bigg\{1+\sum_{n=1}^{m} \frac{C_{n,\sigma}(\log_2{\tau})}{(\log{\tau})^n} \\
&\qquad\qquad\qquad\qquad\qquad
+\frac{1-\sigma}{\sigma} h_{m,\sigma}(\tau)
+O\left(\left(\frac{\log_2{\tau}}{\log{\tau}}\right)^{m+1}\right)\Bigg\}, \\
\log{\kappa}
&=\frac{\sigma}{1-\sigma} (\log{\tau})
\left\{1+\sum_{n=1}^{m} \frac{C_{n,\sigma}^*(\log_2{\tau})}{(\log{\tau})^n} 
+O\left(\left(\frac{\log_2{\tau}}{\log{\tau}}\right)^{m+1}\right)\right\},
\end{align*}
where $C_{n,\sigma}(x)$ and $C_{n,\sigma}^*(x)$ are polynomials of degree at most $n$. 
Then formula \eqref{eq:03171801} deduces the identity
\begin{gather*}
\tau
=\tau \left\{1+\sum_{n=1}^{m} \frac{D_{n,\sigma}(\log_2{\tau})}{(\log{\tau})^n} 
+\frac{1-\sigma}{\sigma} h_{m,\sigma}(\tau)
+O\left(\left(\frac{\log_2{\tau}}{\log{\tau}}\right)^{m+1}\right)\right\}
\end{gather*}
with some polynomials $D_{n,\sigma}(x)$ of degree at most $n$. 
Therefore we obtain
\begin{gather*}
h_{m,\sigma}(\tau)
=-\frac{\sigma}{1-\sigma}\sum_{n=1}^{m} \frac{D_{n,\sigma}(\log_2{\tau})}{(\log{\tau})^n}
+O\left(\left(\frac{\log_2{\tau}}{\log{\tau}}\right)^{m+1}\right). 
\end{gather*}
By inductive assumption \eqref{eq:03121713}, we see that $D_{1,\sigma}(x)=\cdots=D_{m-1,\sigma}(x)=0$. 
Hence the formula
\begin{gather*}
h_{m,\sigma}(\tau)
=-\frac{\sigma}{1-\sigma} \frac{D_{m,\sigma}(\log_2{\tau})}{(\log{\tau})^m}
+O\left(\left(\frac{\log_2{\tau}}{\log{\tau}}\right)^{m+1}\right) 
\end{gather*}
follows, which asserts that the desired result is valid when $N=m+1$. 
\end{proof}

\begin{proposition}\label{prop:4.3}
Let $N \in \mathbb{Z}_{\geq1}$ and put $\tau=2\log{t}+2\gamma$.  
Denote by $\kappa=\kappa(1,\tau)$ the solution of \eqref{eq:03031628} with $\sigma=1$. 
For $n=0,\ldots,N-1$, there exist real numbers $b_n$ with $b_0=1$ such that the formula
\begin{gather*}
\kappa
=\exp\left(t-\frac{1}{2}\mathfrak{g}_{0,1}\right) 
\left\{ \sum_{n=0}^{N-1} \frac{b_n}{t^n}
+O\left(\frac{1}{t^N}\right) \right\}
\end{gather*}
holds if $t>0$ is large enough. 
Here, the implied constant depends on $N$. 
Furthermore, the real number $b_1$ is obtained as
\begin{gather*}
b_1
=-\frac{1}{8}\mathfrak{g}_{0,1}^2-\frac{1}{2}\mathfrak{g}_{1,1}, 
\end{gather*}
where $\mathfrak{g}_{0,1}$ and $\mathfrak{g}_{1,1}$ are the constants defined by \eqref{eq:02261607}. 
\end{proposition}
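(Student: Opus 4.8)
The plan is to follow the proof of Proposition~\ref{prop:4.2} almost verbatim, using Proposition~\ref{prop:3.2} in place of Proposition~\ref{prop:3.1} and the substitution $\tau=2\log{t}+2\gamma$. Existence and uniqueness of the saddle-point $\kappa=\kappa(1,\tau)>0$ and the fact that $\kappa\to\infty$ as $\tau\to\infty$ (hence as $t\to\infty$) are already supplied by Lemma~\ref{lem:4.1}, so the only task is to invert the relation $f'_1(\kappa)=\tau$ asymptotically.

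First I would pin down the leading term. By Proposition~\ref{prop:3.2} with $N=1$, the equation $f'_1(\kappa)=\tau$ reads
\begin{gather*}
2(\log_2{\kappa}+\gamma)
+\frac{1}{\log{\kappa}}\left(\mathfrak{g}_{0,1}+O\left(\frac{1}{\log{\kappa}}\right)\right)
=2\log{t}+2\gamma,
\end{gather*}
equivalently
\begin{gather*}
\log_2{\kappa}
=\log{t}-\frac{1}{2\log{\kappa}}\left(\mathfrak{g}_{0,1}+O\left(\frac{1}{\log{\kappa}}\right)\right).
\end{gather*}
In particular $\log_2{\kappa}=\log{t}+o(1)$, so $\log{\kappa}\asymp t$; exponentiating the last display then gives $\log{\kappa}=t-\frac{1}{2}\mathfrak{g}_{0,1}+O(1/t)$, hence $\kappa=\exp(t-\frac{1}{2}\mathfrak{g}_{0,1})(1+O(1/t))$, which is the assertion for $N=1$ with $b_0=1$.

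Next I would run an induction on $N$, exactly parallel to the one in Proposition~\ref{prop:4.2}. Writing $L=\log{\kappa}$ and assuming the statement for $N=1,\dots,m$, one obtains from $\kappa=e^{L}$ an expansion $L=t-\frac{1}{2}\mathfrak{g}_{0,1}+\sum_{n=1}^{m-1}d_{n}t^{-n}+O(t^{-m})$ with constants $d_n$. Feeding this into Proposition~\ref{prop:3.2} with $N=m+1$ — expanding $1/\log{\kappa}=1/L$ and $\log_2{\kappa}=\log{L}$ in powers of $1/t$ up to order $t^{-(m+1)}$ — and comparing with $\frac{1}{2}\tau-\gamma=\log{t}$, one solves for the next coefficient and re-exponentiates. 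As in Proposition~\ref{prop:4.2}, the point that makes the induction close is that only powers of $1/t$, and no powers of $\log{t}$, occur in the expansion of $L$; this yields the stated form $\kappa=\exp(t-\frac{1}{2}\mathfrak{g}_{0,1})\{\sum_{n=0}^{N-1}b_n t^{-n}+O(t^{-N})\}$. I expect this bookkeeping to be the only genuinely delicate point, but it is routine once the leading term has been fixed.

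Finally, to extract $b_1$ I would carry the computation to order $t^{-2}$. Writing $L=t-\frac{1}{2}\mathfrak{g}_{0,1}+d_1/t+O(1/t^2)$, one has
\begin{align*}
\log{L}
&=\log{t}-\frac{\mathfrak{g}_{0,1}}{2t}+\frac{d_1}{t^2}-\frac{\mathfrak{g}_{0,1}^2}{8t^2}+O\left(\frac{1}{t^3}\right), \\
\frac{1}{2L}\left(\mathfrak{g}_{0,1}+\frac{\mathfrak{g}_{1,1}}{L}\right)
&=\frac{\mathfrak{g}_{0,1}}{2t}+\frac{\mathfrak{g}_{0,1}^2}{4t^2}+\frac{\mathfrak{g}_{1,1}}{2t^2}+O\left(\frac{1}{t^3}\right),
\end{align*}
and equating coefficients of $t^{-2}$ in $\log{L}=\log{t}-\frac{1}{2L}(\mathfrak{g}_{0,1}+\mathfrak{g}_{1,1}/L+O(L^{-2}))$ gives $d_1=-\frac{1}{8}\mathfrak{g}_{0,1}^2-\frac{1}{2}\mathfrak{g}_{1,1}$. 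Since $\kappa=e^{t-\frac{1}{2}\mathfrak{g}_{0,1}}e^{d_1/t+O(1/t^2)}=e^{t-\frac{1}{2}\mathfrak{g}_{0,1}}(1+d_1/t+O(1/t^2))$, we conclude $b_1=d_1=-\frac{1}{8}\mathfrak{g}_{0,1}^2-\frac{1}{2}\mathfrak{g}_{1,1}$, as claimed, with $\mathfrak{g}_{0,1}$ and $\mathfrak{g}_{1,1}$ the constants of \eqref{eq:02261607}.
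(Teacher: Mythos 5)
Your proposal is correct and follows essentially the same route as the paper: invert $f'_1(\kappa)=\tau$ via Proposition~\ref{prop:3.2}, first pin down $\log\kappa=t-\frac{1}{2}\mathfrak{g}_{0,1}+O(1/t)$, then run the same induction to get the full expansion in powers of $1/t$, and finally re-exponentiate. Your explicit order-$t^{-2}$ computation of $d_1=b_1$ matches the paper's $\beta_1=\frac{1}{8}\mathfrak{g}_{0,1}^2+\frac{1}{2}\mathfrak{g}_{1,1}$ with $b_1=-\beta_1$, so the argument and the constant both agree with the paper's proof.
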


\begin{proof}
By Proposition \ref{prop:3.2} with $N=1$, we have 
\begin{gather*}
2\log{t}
=2\log_2{\kappa}
+\frac{\mathfrak{g}_{0,1}}{\log{\kappa}}
+O\left(\frac{1}{(\log{\kappa})^2}\right) 
\end{gather*}
since we put $\tau=2\log{t}+2\gamma$. 
Therefore the asymptotic formula
\begin{gather*}
t
=(\log{\kappa}) \exp\left(\frac{\mathfrak{g}_{0,1}}{2\log{\kappa}}
+O\left(\frac{1}{(\log{\kappa})^2}\right)\right)
=\log{\kappa}+\frac{1}{2}\mathfrak{g}_{0,1}
+O\left(\frac{1}{t}\right)
\end{gather*}
follows by noting that $t \asymp \log{\kappa}$ holds. 
If we use Proposition \ref{prop:3.2} with $N \geq2$, then one can prove more generally
\begin{gather}\label{eq:03201702}
t
=\log{\kappa}
+\frac{1}{2}\mathfrak{g}_{0,1}
+\sum_{n=1}^{N-1} \frac{\beta_n}{t^n}
+O\left(\frac{1}{t^N}\right)
\end{gather}
by induction on $N$, where $\beta_n$ are real numbers such that $\beta_1=\frac{1}{8}\mathfrak{g}_{0,1}^2+\frac{1}{2}\mathfrak{g}_{1,1}$. 
See also the proof of \cite[Lemma 8.1]{LiuRoyerWu2008} for an analogous argument. 
From the above, we obtain the asymptotic formula
\begin{align*}
\kappa
&=\exp\left(t-\frac{1}{2}\mathfrak{g}_{0,1}
-\sum_{n=1}^{N-1} \frac{\beta_n}{t^n}
+O\left(\frac{1}{t^N}\right)\right) \\
&=\exp\left(t-\frac{1}{2}\mathfrak{g}_{0,1}\right)
\left\{ \sum_{n=0}^{N-1} \frac{b_n}{t^n}
+O\left(\frac{1}{t^N}\right) \right\}
\end{align*}
as desired, where $b_n$ are real numbers such that $b_0=1$ and $b_1=-\beta_1$. 
\end{proof}

\subsection{Proof of Theorem \ref{thm:1.1}}\label{sec:4.2}
Let $\sigma>1/2$ and $\tau>0$. 
Denote by $\kappa=\kappa(\sigma,\tau)$ the solution of \eqref{eq:03031628}. 
Using the density function $\mathcal{M}_\sigma$ of \eqref{eq:05111549}, we define
\begin{gather}\label{eq:03211608}
\mathcal{N}_\sigma(x;\tau)
=\frac{e^{\kappa(x+\tau)}}{F_\sigma(\kappa)}
\mathcal{M}_\sigma(x+\tau)
\end{gather}
for $x \in \mathbb{R}$, where $F_\sigma(\kappa)$ is the moment-generating function of \eqref{eq:02261551}. 
To begin with, we show the following lemmas on the function $\mathcal{N}_\sigma(x;\tau)$. 

\begin{lemma}\label{lem:4.4}
Let $\sigma>1/2$ and $\tau>0$. 
Then $\mathcal{N}_\sigma(u;\tau)$ is a non-negative continuous function satisfying the equalities
\begin{gather*}
\int_{\mathbb{R}} \mathcal{N}_\sigma(x;\tau) \,|dx|
=1
\quad\text{and}\quad
\int_{\mathbb{R}} \mathcal{N}_\sigma(x;\tau)x \,|dx|
=0. 
\end{gather*}
\end{lemma}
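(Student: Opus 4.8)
The function $\mathcal{N}_\sigma(x;\tau)$ is defined in \eqref{eq:03211608} as a rescaling of the probability density $\mathcal{M}_\sigma$ by the factor $e^{\kappa(x+\tau)}/F_\sigma(\kappa)$. Since $\mathcal{M}_\sigma$ is continuous and non-negative (it is a probability density function by \eqref{eq:05111549}), and the exponential factor is continuous and positive, the function $\mathcal{N}_\sigma(x;\tau)$ is manifestly continuous and non-negative. It remains to verify the two integral identities, and the plan is to express both in terms of the moment-generating function $F_\sigma$ and its derivatives, evaluated at $\kappa$.

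\textbf{The mass-one identity.} Substituting the definition and changing variables $y=x+\tau$ (which leaves $|dy|=|dx|$), I would write
\begin{align*}
\int_{\mathbb{R}} \mathcal{N}_\sigma(x;\tau) \,|dx|
&=\frac{1}{F_\sigma(\kappa)}\int_{\mathbb{R}} e^{\kappa(x+\tau)} \mathcal{M}_\sigma(x+\tau) \,|dx|
=\frac{1}{F_\sigma(\kappa)}\int_{\mathbb{R}} e^{\kappa y} \mathcal{M}_\sigma(y) \,|dy|
=\frac{F_\sigma(\kappa)}{F_\sigma(\kappa)}=1,
\end{align*}
where the last step uses the very definition of $F_\sigma$ in \eqref{eq:02261551}. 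This requires knowing that the integral $\int e^{\kappa y}\mathcal{M}_\sigma(y)\,|dy|$ converges, which is guaranteed since $F_\sigma(s)$ is defined for all $s\in\mathbb{C}$ (as recalled in Section \ref{sec:3}, following \cite{Mine2020+}).

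\textbf{The mean-zero identity.} Here I would use that $F_\sigma'(\kappa)=\int_{\mathbb{R}} y\, e^{\kappa y}\mathcal{M}_\sigma(y)\,|dy|$, obtained by differentiating under the integral sign (justified by the same absolute-convergence property of $F_\sigma$). After the change of variables $y=x+\tau$,
\begin{align*}
\int_{\mathbb{R}} \mathcal{N}_\sigma(x;\tau)\,x \,|dx|
&=\frac{1}{F_\sigma(\kappa)}\int_{\mathbb{R}} (y-\tau)\, e^{\kappa y} \mathcal{M}_\sigma(y) \,|dy|
=\frac{F_\sigma'(\kappa)}{F_\sigma(\kappa)}-\tau
=f_\sigma'(\kappa)-\tau,
\end{align*}
since $f_\sigma=\log F_\sigma$ gives $f_\sigma'(\kappa)=F_\sigma'(\kappa)/F_\sigma(\kappa)$. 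By the defining property of the saddle-point $\kappa=\kappa(\sigma,\tau)$, namely $f_\sigma'(\kappa)=\tau$ from \eqref{eq:03031628} (whose solution exists and is unique by Lemma \ref{lem:4.1}), this equals zero.

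\textbf{Main obstacle.} There is no serious obstacle here: the lemma is essentially a bookkeeping consequence of the definitions together with the saddle-point equation. The only point requiring a word of care is the interchange of differentiation and integration used to identify $\int y\,e^{\kappa y}\mathcal{M}_\sigma(y)\,|dy|$ with $F_\sigma'(\kappa)$, but this is immediate from the fact that $F_\sigma(s)$ is entire (equivalently, $\mathcal{M}_\sigma$ has exponential moments of all orders), which was established in \cite{Mine2020+} and recalled in Section \ref{sec:3}.
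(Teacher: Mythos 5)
Your proposal is correct and is essentially the same argument as the paper's: both reduce the two identities to the definition of $F_\sigma$ and the saddle-point equation $f_\sigma'(\kappa)=\tau$. The only cosmetic difference is that the paper phrases the computation through the Fourier transform $\widetilde{\mathcal{N}}_\sigma(v;\tau)=e^{-i\tau v}F_\sigma(\kappa+iv)/F_\sigma(\kappa)$ and reads off the two identities from $\widetilde{\mathcal{N}}_\sigma(0;\tau)=1$ and $\frac{d}{dv}\widetilde{\mathcal{N}}_\sigma(v;\tau)\big|_{v=0}=0$, whereas you compute the integrals directly after the change of variables $y=x+\tau$.
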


\begin{proof}
By the definition of $\mathcal{N}_\sigma$, the Fourier transform is represented as
\begin{gather}\label{eq:03191630}
\widetilde{\mathcal{N}}_\sigma(v;\tau)
:=\int_{\mathbb{R}} \mathcal{N}_\sigma(x;\tau) e^{ixv} \,|dx|
=e^{-i \tau v} \frac{F_\sigma(\kappa+iv)}{F_\sigma(\kappa)}.
\end{gather}
Thus we have $\widetilde{\mathcal{N}}_\sigma(0;\tau)=1$, and furthermore, 
\begin{gather*}
\frac{d}{dv} \widetilde{\mathcal{N}}_\sigma(v;\tau) \bigg|_{v=0}
=-i \tau +i f'_\sigma(\kappa)
=0
\end{gather*}
due to \eqref{eq:03031628}. 
Hence the result follows since we have the identities
\begin{gather*}
\frac{d^k}{dv^k} \widetilde{\mathcal{N}}_\sigma(v;\tau) \bigg|_{v=0}
=i^k \int_{\mathbb{R}} \mathcal{N}_\sigma(x;\tau)x^k \,|dx|
\end{gather*}
for all $k \geq0$. 
\end{proof}

\begin{lemma}\label{lem:4.5}
Let $1/2<\sigma \leq1$ and $\tau>0$ be a large real number. 
Then there exist positive constants $c_1(\sigma)$ and $c_2(\sigma)$ such that we have 
\begin{gather*}
\big|\widetilde{\mathcal{N}}_\sigma(v;\tau)\big|
\leq \exp\left(-c_2(\sigma)v^2 \frac{\kappa^{\frac{1}{\sigma}-2}}{\log{\kappa}} \right)
\end{gather*}
if $|v| \leq c_1(\sigma) \kappa$ is satisfied. 
\end{lemma}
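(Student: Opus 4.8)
The plan is to estimate $|\widetilde{\mathcal{N}}_\sigma(v;\tau)| = |F_\sigma(\kappa+iv)/F_\sigma(\kappa)|$ using the product decomposition \eqref{eq:02262021} and the local estimates of Section \ref{sec:3}. By \eqref{eq:03191630} we have
\begin{gather*}
\log\big|\widetilde{\mathcal{N}}_\sigma(v;\tau)\big|
=\RE\big(f_\sigma(\kappa+iv)-f_\sigma(\kappa)\big)
=\sum_{p} \RE\big(f_{\sigma,p}(\kappa+iv)-f_{\sigma,p}(\kappa)\big),
\end{gather*}
so it suffices to produce a negative upper bound for the real part of each local difference and sum. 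I would first restrict to $|v|\le c_1(\sigma)\kappa$ with $c_1(\sigma)$ small, so that $s=\kappa+iv$ lies well inside the disk $|s-\kappa|\le \kappa/2$ where Lemmas \ref{lem:3.3}--\ref{lem:3.6} apply; in fact taking $c_1(\sigma)$ small enough that $|v|\le \varepsilon\kappa$ keeps us in the regime $|\IM(s)|\le|\RE(s)|$ needed for Lemma \ref{lem:2.1}(ii) and Lemma \ref{lem:2.2}.

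The dominant contribution should come from the mid-range primes $y_1<p<y_2$, where Lemma \ref{lem:3.6}(ii) gives $f_{\sigma,p}(s)=g(sp^{-\sigma})+O(\kappa p^{-2\sigma}+p^{-1})$. For these, the key is a lower bound of the form $\RE\big(g(u)-g(u+iw)\big)\gg \min(w^2,\, \text{something})$ for $u>0$; more precisely, since $g(z)=\log G(z)$ and $G$ is (up to the asymptotics of Lemma \ref{lem:2.1}(ii), (iii)) comparable to $e^{2z}z^{-3/2}$ for large $z$ and to $1+\tfrac12 z^2$ for small $z$, one finds $\RE\,g(u)-\RE\,g(u+iw)\gg w^2 g''(u)$ when $|w|\lesssim u$, and I would make this quantitative via the integral representation $g(u)-\RE g(u+iw)=\int_0^\pi(1-\cos(2w(\cos\theta)p^{-\sigma}))\cdots$ at the level of $F_{\sigma,p}$ directly — this avoids branch issues with the logarithm entirely. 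Writing $F_{\sigma,p}(s)=\int_0^\pi e^{2s\lambda_{p,\sigma}(\theta)}\,d\mu_p(\theta)$ and $s=\kappa+iv$, we get the clean bound
\begin{gather*}
\big|F_{\sigma,p}(\kappa+iv)\big|^2
\le F_{\sigma,p}(\kappa)^2
-c\int\!\!\int \big(v(\lambda_{p,\sigma}(\theta)-\lambda_{p,\sigma}(\theta'))\big)^2 e^{2\kappa(\lambda_{p,\sigma}(\theta)+\lambda_{p,\sigma}(\theta'))}\,d\mu_p(\theta)\,d\mu_p(\theta'),
\end{gather*}
which after dividing by $F_{\sigma,p}(\kappa)^2$ yields $|\widetilde{\mathcal{N}}_{\sigma,p}(v)|^2\le 1-c\, v^2\,\mathrm{Var}_p$, where $\mathrm{Var}_p$ is the variance of $\lambda_{p,\sigma}$ under the tilted measure; summing $-\tfrac{c}{2}v^2\sum_p \mathrm{Var}_p = -\tfrac{c}{2}v^2 f''_\sigma(\kappa)$ and invoking Proposition \ref{prop:3.1} (with $j=2$), which gives $f''_\sigma(\kappa)\asymp \kappa^{1/\sigma-2}/\log\kappa$, produces exactly the asserted bound with $c_2(\sigma)$ a constant multiple of the implied constant in that proposition. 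For $\sigma=1$ one uses Proposition \ref{prop:3.2} instead, and $f''_1(\kappa)\asymp \kappa^{-1}/\log\kappa$ fits the stated form with $1/\sigma=1$.

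The main obstacle I anticipate is controlling the \emph{small} primes $p\le y_1$ and confirming they do not spoil the sign: there $\lambda_{p,\sigma}$ is sharply peaked at $\theta=0$, the tilted measure concentrates near $\theta=0$, and one must check that the oscillation in $v$ still only \emph{decreases} $|F_{\sigma,p}|$ (it does, by the variance argument above — the inequality $|F_{\sigma,p}(\kappa+iv)|\le F_{\sigma,p}(\kappa)$ is automatic since $F_{\sigma,p}(\kappa)=\int e^{2\kappa\lambda_{p,\sigma}}\,d\mu_p \ge |\int e^{2(\kappa+iv)\lambda_{p,\sigma}}\,d\mu_p|$), so these primes contribute a harmless factor $\le 1$ and can simply be discarded from the lower bound. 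The large primes $p\ge y_2$ contribute $f_{\sigma,p}(s)=O(\kappa^2p^{-2\sigma})$ by Lemma \ref{lem:3.6}(iii), and $\RE(f_{\sigma,p}(\kappa+iv)-f_{\sigma,p}(\kappa))$ is then $O(\kappa^2 p^{-2\sigma})$ as well, summing to $O(\kappa^2 y_2^{1-2\sigma}/\log y_2)=O(\kappa^{1/\sigma}/(\log\kappa)^{N+1})$ for any fixed $N$; since we only need one valid $N$ (say $N=3$) and $\kappa^{1/\sigma-2}/\log\kappa$ times $c_1(\sigma)^2\kappa^2 = c_1(\sigma)^2\kappa^{1/\sigma}/\log\kappa$ already dominates this, the large-prime error is absorbed by shrinking $c_2(\sigma)$ slightly. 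Thus the only real work is the variance lower bound for the mid-range primes and bookkeeping the three error sums against $v^2 f''_\sigma(\kappa)$.
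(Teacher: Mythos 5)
Your overall strategy---restrict the product to a range of primes, bound each local factor, and sum to hit the order $v^2 f''_\sigma(\kappa)$---has the right shape and does aim at the correct answer (by Propositions~\ref{prop:3.1}, \ref{prop:3.2} one indeed has $f''_\sigma(\kappa)\asymp\kappa^{1/\sigma-2}/\log\kappa$). You are also right that $|F_{\sigma,p}(\kappa+iv)|\le F_{\sigma,p}(\kappa)$ holds trivially, so primes outside the chosen window may simply be dropped from the upper bound; the paper does exactly this. But the step you call a ``clean bound,''
\begin{gather*}
|F_{\sigma,p}(\kappa+iv)|^2 \le F_{\sigma,p}(\kappa)^2 - c\iint \bigl(v(\lambda_{p,\sigma}(\theta)-\lambda_{p,\sigma}(\theta'))\bigr)^2 e^{2\kappa(\lambda_{p,\sigma}(\theta)+\lambda_{p,\sigma}(\theta'))}\,d\mu_p\,d\mu_p,
\end{gather*}
is not clean: it conceals a genuine gap. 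The exact identity is
\begin{gather*}
F_{\sigma,p}(\kappa)^2 - |F_{\sigma,p}(\kappa+iv)|^2 = \iint e^{2\kappa(\lambda+\lambda')}\bigl(1-\cos\bigl(2v(\lambda-\lambda')\bigr)\bigr)\,d\mu_p\,d\mu_p,
\end{gather*}
and replacing $1-\cos x$ by $cx^2$ requires $|2v(\lambda-\lambda')|$ to stay bounded (say $\le\pi$) on the bulk of the tilted domain. Since $\lambda_{p,\sigma}$ has oscillation $\asymp p^{-\sigma}$ and $|v|\le c_1\kappa$, the argument of $\cos$ ranges up to $\asymp\kappa p^{-\sigma}$, which for $p$ near the lower end $y_1\asymp\kappa^{1/(2\sigma)}$ of your mid-range window grows like $\sqrt{\kappa}$. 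For those primes the pointwise quadratic lower bound on $1-\cos$ fails, and your inequality would need a tail estimate for the tilted measure to survive; as written, the conclusion ``$\sum_p\mathrm{Var}_p=f''_\sigma(\kappa)$'' silently sums over primes where the local inequality has not been established.

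Within your framework the fix is available but not free: discard all primes with $\kappa p^{-\sigma}$ larger than a fixed constant $C$ (harmless, by the trivial $\le1$ bound), and then verify that the surviving primes $p\gtrsim\kappa^{1/\sigma}$ still carry a positive proportion of the total variance $\asymp\kappa^{1/\sigma-2}/\log\kappa$. That verification is exactly the work the ``clean bound'' was meant to bypass. The paper sidesteps the whole issue by taking a different route: via Lemma~\ref{lem:3.4} and Lemma~\ref{lem:2.1}~$(\mathrm{ii})$ it writes $F_{\sigma,p}(s)$ in the Bessel-type form $\frac{1}{\sqrt{4\pi}}e^{2sp^{-\sigma}}(sp^{-\sigma})^{-3/2}(1+h_{\sigma,p}(s))$ on a window $Q_1<p<Q_2$ with $Q_1\asymp\kappa^{1/(2\sigma)}$ and $Q_2\asymp\kappa^{1/\sigma}$, observes that $|e^{2sp^{-\sigma}}|$ is independent of $v$, and so extracts the entire $v$-decay from the single factor $|s|^{-3/2}=(\kappa^2+v^2)^{-3/4}$. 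Each prime in the window then contributes $\approx -\tfrac34\log(1+v^2/\kappa^2)\asymp -v^2/\kappa^2$ uniformly in $p$, with the $h$-terms controlled via Cauchy estimates, and summing over $\asymp Q_2/\log Q_2$ primes gives the stated bound without any pointwise control on the argument of a cosine.
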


\begin{proof}
By formula \eqref{eq:03191630}, it is sufficient to evaluate $|F_\sigma(s)|/F_\sigma(\kappa)$ with $s=\kappa+iv$. 
Recall that the function $F_\sigma(s)$ satisfies \eqref{eq:02262021}. 
Then we obtain
\begin{gather}\label{eq:03301309}
\frac{|F_\sigma(s)|}{F_\sigma(\kappa)}
\leq \prod_{Q_1<p<Q_2} \frac{|F_{\sigma,p}(s)|}{F_{\sigma,p}(\kappa)}
\end{gather}
for $Q_1,Q_2>0$ since the inequality $|F_{\sigma,p}(s)| \leq F_{\sigma,p}(\kappa)$ holds for every $p$. 
We deduce from Lemmas \ref{lem:2.1} and \ref{lem:3.4} that 
\begin{gather}\label{eq:03301325}
F_{\sigma,p}(s)
=\frac{1}{\sqrt{4\pi}} \frac{\exp(2sp^{-\sigma})}{(sp^{-\sigma})^{3/2}}
\left(1+h_{\sigma,p}(s)\right)
\end{gather}
for any $p>y_1$ in the disk $|s-\kappa|\leq \kappa/2$, where $h_{\sigma,p}(s)$ is a holomorphic function such that $h_{\sigma,p}(s)\ll \kappa^{-1}p^{\sigma}+\kappa p^{-2\sigma}+p^{-1}$. 
By Cauchy's integral formula, we have 
\begin{gather}\label{eq:03301331}
h_{\sigma,p}^{(j)}(\kappa)
\ll 2^jj!\kappa^{-j}
\left(\kappa^{-1}p^{\sigma}+\kappa p^{-2\sigma}+p^{-1}\right) 
\end{gather}
for all $j \geq0$. 
Then, we choose the parameters $Q_1, Q_2>0$ as 
\begin{gather}\label{eq:03301401}
Q_1
=\left(\frac{\kappa}{\epsilon_1} \right)^{\frac{1}{2\sigma}}
\quad\text{and}\quad
Q_2
=\left(\epsilon_2 \kappa \right)^{\frac{1}{\sigma}}
\end{gather}
with small positive constants $\epsilon_j=\epsilon_j(\sigma)$. 
For $Q_1<p<Q_2$, formula \eqref{eq:03301325} yields 
\begin{align*}
\log\frac{|F_{\sigma,p}(s)|}{F_{\sigma,p}(\kappa)}
&=-\frac{3}{2}\RE\log\left(1+\frac{iv}{\kappa}\right) 
+\RE\log\left(1+\frac{h_{\sigma,p}(s)-h_{\sigma,p}(\kappa)}{1+h_{\sigma,p}(\kappa)}\right) \\
&=-\frac{3}{4}\frac{v^2}{\kappa^2}+\frac{\RE{(h_{\sigma,p}(s)-h_{\sigma,p}(\kappa))}}{1+h_{\sigma,p}(\kappa)}
+O\left(\frac{v^3}{\kappa^3}+|h_{\sigma,p}(s)-h_{\sigma,p}(\kappa)|^2\right)
\end{align*} 
if $|v| \leq c_1 \kappa$ is satisfied with a small positive constant $c_1=c_1(\sigma)$. 
Remark that $h'_{\sigma,p}(\kappa)$ is real by definition. 
By \eqref{eq:03301331}, we have 
\begin{align*}
\RE{(h_{\sigma,p}(s)-h_{\sigma,p}(\kappa))}
&\ll \sum_{j=2}^{\infty} \frac{|h_{\sigma,p}^{(j)}(\kappa)|}{j!} |v|^j \\
&\ll \frac{v^2}{\kappa^2} \left(\kappa^{-1}p^{\sigma}+\kappa p^{-2\sigma}+p^{-1}\right), 
\end{align*}
and furthermore, 
\begin{gather*}
|h_{\sigma,p}(s)-h_{\sigma,p}(\kappa)|^2
\ll \frac{v^2}{\kappa^2} \left(\kappa^{-1}p^{\sigma}+\kappa p^{-2\sigma}+p^{-1}\right)
\end{gather*}
for $Q_1<p<Q_2$. 
Therefore we deduce
\begin{gather*}
\log\frac{|F_{\sigma,p}(s)|}{F_{\sigma,p}(\kappa)}
=\left(-\frac{3}{4}+O\left(\frac{|v|}{\kappa}+\kappa^{-1}p^{\sigma}+\kappa p^{-2\sigma}+p^{-1}\right)\right)
\frac{v^2}{\kappa^2} 
\leq -\frac{1}{2} \frac{v^2}{\kappa^2}
\end{gather*}
for $Q_1<p<Q_2$ if $c_1, \epsilon_1, \epsilon_2>0$ are small enough, and $\kappa=\kappa(\sigma,\tau)>0$ is large enough.  
By the prime number theorem, we obtain the inequality
\begin{gather*}
\sum_{Q_1<p<Q_2} \log\frac{|F_{\sigma,p}(s)|}{F_{\sigma,p}(\kappa)}
\leq -\frac{1}{4} \frac{v^2}{\kappa^2} \frac{Q_2}{\log{Q_2}}. 
\end{gather*}
Inserting \eqref{eq:03301401}, we conclude that
\begin{gather*}
\prod_{Q_1<p<Q_2} \frac{|F_{\sigma,p}(s)|}{F_{\sigma,p}(\kappa)}
\leq \exp\left(-c_2(\sigma)v^2 \frac{\kappa^{\frac{1}{\sigma}-2}}{\log{\kappa}} \right)
\end{gather*}
with some positive constant $c_2(\sigma)$, which completes the proof. 
\end{proof}

\begin{lemma}\label{lem:4.6}
Let $1/2<\sigma \leq1$ and $\tau>0$ be a large real number. 
For any $c>0$, there exists a positive constant $c_3(\sigma,c)$ such that we have 
\begin{gather*}
\big|\widetilde{\mathcal{N}}_\sigma(v;\tau)\big|
\leq \exp\left(-c_3(\sigma,c)\frac{|v|^{\frac{1}{\sigma}}}{\log{|v|}}\right)
\end{gather*}
if $|v|>c \kappa$ is satisfied.
\end{lemma}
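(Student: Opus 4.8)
The plan is to bound $\big|\widetilde{\mathcal{N}}_\sigma(v;\tau)\big|$ by a product of local factors over a carefully chosen short range of primes, each factor being bounded by a fixed constant $1-\delta<1$. By \eqref{eq:03191630} we have $\big|\widetilde{\mathcal{N}}_\sigma(v;\tau)\big|=|F_\sigma(\kappa+iv)|/F_\sigma(\kappa)$, and since $\lambda_{p,\sigma}(\theta)$ is real-valued the triangle inequality gives $|F_{\sigma,p}(\kappa+iv)|\leq F_{\sigma,p}(\kappa)$ for every prime $p$. Hence, by the Euler product \eqref{eq:02262021},
\begin{gather*}
\big|\widetilde{\mathcal{N}}_\sigma(v;\tau)\big|
=\prod_{p} \frac{|F_{\sigma,p}(\kappa+iv)|}{F_{\sigma,p}(\kappa)}
\leq \prod_{p\in\mathcal{P}} \frac{|F_{\sigma,p}(\kappa+iv)|}{F_{\sigma,p}(\kappa)}
\end{gather*}
for any set of primes $\mathcal{P}$. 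I would take $\mathcal{P}=\mathcal{P}(v)=\{p : |v|^{1/\sigma}<p\leq 2|v|^{1/\sigma}\}$: for such $p$ the parameters $w:=|v|p^{-\sigma}$ and $u:=\kappa p^{-\sigma}$ lie in the fixed compact ranges $w\in[2^{-\sigma},1)$ and, using the hypothesis $|v|>c\kappa$, $0\leq u<1/c$, while $p>|v|^{1/\sigma}>(c\kappa)^{1/\sigma}\to\infty$ as $\tau\to\infty$ by Lemma \ref{lem:4.1}.

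The heart of the argument is the uniform local estimate: there is $\delta=\delta(\sigma,c)>0$ with
\begin{gather*}
\frac{|F_{\sigma,p}(\kappa+iv)|}{F_{\sigma,p}(\kappa)}\leq 1-\delta
\qquad (p\in\mathcal{P}(v))
\end{gather*}
once $\tau$ is large enough. I would prove this in three steps. First, since $\lambda_{p,\sigma}(\theta)=(\cos\theta)p^{-\sigma}+O(p^{-2\sigma})$ uniformly in $\theta$ while $|\kappa+iv|\,p^{-2\sigma}\ll_{c}|v|^{-1}=o(1)$ for $p\in\mathcal{P}(v)$, one obtains $F_{\sigma,p}(s)=G_p(sp^{-\sigma})+o(1)$ for $s\in\{\kappa,\kappa+iv\}$, with $G_p$ as in \eqref{eq:02251719} and the $o(1)$ uniform in $p\in\mathcal{P}(v)$. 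Second, from $d\mu_p(\theta)-d\mu_\infty(\theta)=(\Lambda_p(2\theta)-1)\frac{2}{\pi}\sin^2\theta\,d\theta$ with $\Lambda_p(2\theta)=1+O(p^{-1})$ uniformly, a direct estimate gives $G_p(z)=G(z)+O(p^{-1})$ uniformly for $z$ in any fixed compact set; here Lemma \ref{lem:2.2} cannot simply be quoted because $w$ need not be at most $u$, but on a compact region the crude absolute bound is all that is needed. Combining these with $G(u)\geq1$ for $u\geq0$ yields
\begin{gather*}
\frac{|F_{\sigma,p}(\kappa+iv)|}{F_{\sigma,p}(\kappa)}
=\frac{|G(u+iw)|}{G(u)}+o(1)
\end{gather*}
uniformly for $p\in\mathcal{P}(v)$. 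Third, for $w\neq0$ and $u\geq0$ the triangle inequality gives
\begin{gather*}
|G(u+iw)|
=\left|\int_{0}^{\pi} e^{2u\cos\theta}e^{2iw\cos\theta}\,d\mu_\infty(\theta)\right|
<\int_{0}^{\pi} e^{2u\cos\theta}\,d\mu_\infty(\theta)
=G(u),
\end{gather*}
the inequality being strict because $e^{2iw\cos\theta}$ is not almost everywhere a constant; as $(u,w)\mapsto|G(u+iw)|/G(u)$ is continuous and $<1$ on the compact set $\{0\leq u\leq 1/c,\ 2^{-\sigma}\leq w\leq1\}$, it is bounded there by some $1-2\delta$, and the local estimate follows for $\tau$ large.

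Finally, the prime number theorem gives $\#\mathcal{P}(v)\gg_\sigma |v|^{1/\sigma}/\log|v|$ for $|v|$ large, so the product above is at most $(1-\delta)^{\#\mathcal{P}(v)}\leq\exp\!\big(-c_3(\sigma,c)|v|^{1/\sigma}/\log|v|\big)$, which is the asserted bound (the hypotheses $|v|>c\kappa$ and $\tau$ large ensuring that $|v|$ is large). I expect the main obstacle to be the local estimate of the second paragraph: on the scale $p^{\sigma}\asymp|v|$ the imaginary part of the argument of $G$ is comparable to, or larger than, its real part, so the comparison of $F_{\sigma,p}$ with the Sato--Tate model $G$ cannot rely on Lemma \ref{lem:2.2} and must be carried out directly on a compact region, after which the strict inequality $|G(u+iw)|<G(u)$ together with a compactness argument furnishes the uniform constant $\delta$.
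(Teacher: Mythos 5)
Your proof is correct, and it follows a genuinely different route from the paper's. Both arguments start from the observation that $|F_{\sigma,p}(\kappa+iv)|\leq F_{\sigma,p}(\kappa)$ for every prime $p$, so that the product \eqref{eq:02262021} may be thinned to any convenient set of primes. The paper retains \emph{all} primes $p>Q_3$ with $Q_3\asymp|v|^{1/\sigma}$; on that range $|s|p^{-\sigma}$ is small, a Taylor expansion gives $\log F_{\sigma,p}(s)=\tfrac14 s^2 p^{-2\sigma}+O(\cdot)$, whence $\log\big(|F_{\sigma,p}(\kappa+iv)|/F_{\sigma,p}(\kappa)\big)\leq -\tfrac18 v^2 p^{-2\sigma}$, and the prime number theorem applied to the tail sum $\sum_{p>Q_3}p^{-2\sigma}$ produces the stated $\asymp|v|^{1/\sigma}/\log|v|$ decay. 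You instead restrict to the single dyadic block $|v|^{1/\sigma}<p\leq 2|v|^{1/\sigma}$, where the rescaled argument $w=|v|p^{-\sigma}$ is bounded away from both $0$ and $\infty$; there you compare $F_{\sigma,p}$ to the Sato--Tate model $G$ on a fixed compact region (correctly noting that Lemma~\ref{lem:2.2} does not literally apply since $w$ may exceed $u$, but the crude $O(p^{-1})$ comparison does the job), and the strict triangle inequality $|G(u+iw)|<G(u)$ for $w\neq0$ together with compactness furnishes a uniform $1-\delta$ bound on each local factor; the prime number theorem then enters only through the count $\#\mathcal{P}(v)\gg|v|^{1/\sigma}/\log|v|$. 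The two proofs yield bounds of the same strength: the paper's is more explicit and closer to the Granville--Soundararajan/Liu--Royer--Wu template, while yours isolates the conceptual mechanism (strict cancellation at scale $p^\sigma\asymp|v|$, made uniform by compactness) and avoids the Taylor expansion of $\log F_{\sigma,p}$ altogether.
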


\begin{proof}
Similarly to \eqref{eq:03301309}, we have the inequality
\begin{gather*}
\frac{|F_\sigma(s)|}{F_\sigma(\kappa)}
\leq \prod_{p>Q_3} \frac{|F_{\sigma,p}(s)|}{F_{\sigma,p}(\kappa)}
\end{gather*}
for $Q_3>0$. 
If the condition $|s|p^{-\sigma}<\delta$ is valid with a small positive constant $\delta=\delta(\sigma,c)$, then $F_{\sigma,p}(s)$ is calculated as
\begin{align*}
F_{\sigma,p}(s)
&=1+2s p^{-\sigma} \mathbb{E}\left[\cos \Theta_p\right]
+s^2 p^{-2\sigma} \mathbb{E}\left[(\cos \Theta_p)^2\right]
+O\left(|s|^3p^{-3\sigma}\right) \\
&=1+\frac{1}{4}s^2 p^{-2\sigma}
+O\left(|s|^2p^{-2\sigma-1}+|s|^3p^{-3\sigma}\right)
\end{align*}
by using the equalities 
\begin{gather*}
\mathbb{E}\left[\cos \Theta_p\right]
=0
\quad\text{and}\quad
\mathbb{E}\left[(\cos \Theta_p)^2\right]
=\frac{1}{4} \left(1+\frac{1}{p}\right). 
\end{gather*} 
It yields the asymptotic formula
\begin{gather*}
\log{F}_{\sigma,p}(s)
=\frac{1}{4}s^2 p^{-2\sigma}
+O\left(|s|^2p^{-2\sigma-1}+|s|^3p^{-3\sigma}\right) 
\end{gather*}
if $Q_3$ is large and $\delta$ is small. 
Then, we choose the parameter $Q_3>0$ as 
\begin{gather}\label{eq:03191745}
Q_3
=\left(\frac{4}{\delta}|v|\right)^{1/\sigma} 
\end{gather}
so that the condition $|s|p^{-\sigma}<\delta$ is satisfied for $p>Q_3$. 
Since $\RE{(s^2)}=\kappa^2-v^2$ with $s=\kappa+iv$, we obtain
\begin{gather*}
\log{|F_{\sigma,p}(s)|}-\log{F_{\sigma,p}(\kappa)}
=-\frac{1}{4}v^2 p^{-2\sigma}
+O\left(|s|^2p^{-2\sigma-1}+|s|^3p^{-3\sigma}\right) 
\end{gather*}
for $p>Q_3$. 
Note that $|v|>c \kappa$ implies $|s|\asymp|v|$. 
Thus we derive
\begin{gather*}
\log\frac{|F_{\sigma,p}(s)|}{F_{\sigma,p}(\kappa)}
=\left(-\frac{1}{4}+O\left(p^{-1}+v p^{-\sigma}\right)\right)
v^2 p^{-2\sigma}
\leq-\frac{1}{8}v^2 p^{-2\sigma} 
\end{gather*}
for $p>Q_3$ if $\delta>0$ is sufficiently small, and $\kappa=\kappa(\sigma,\tau)>0$ is sufficiently large.  
By the prime number theorem, we have 
\begin{gather*}
\sum_{p>Q_3}
\log{\frac{|F_{\sigma,p}(s)|}{F_{\sigma,p}(\kappa)}}
\leq-\frac{v^2}{8} \sum_{p>Q_3} p^{-2\sigma}
\leq-\frac{v^2}{16(2\sigma-1)} \frac{Q_3^{1-2\sigma}}{\log{Q_3}}. 
\end{gather*}
Inserting \eqref{eq:03191745} to this, we derive the inequality
\begin{gather*}
\prod_{p>Q_3} \frac{|F_{\sigma,p}(s)|}{F_{\sigma,p}(\kappa)}
\leq \exp\left(-c_3(\sigma,c)\frac{|v|^{\frac{1}{\sigma}}}{\log{|v|}}\right)
\end{gather*}
with some positive constant $c_3(\sigma,c)$ if $\kappa>0$ is large. 
Hence the result follows. 
\end{proof}

\begin{proof}[Proof of Theorem \ref{thm:1.1}]
Let $1/2<\sigma \leq1$. 
By the definition of the function $\mathcal{N}_\sigma$, the desired result follows if we have
\begin{gather}\label{eq:03211627}
\mathcal{N}_\sigma(x;\tau)
=\frac{1}{\sqrt{f''_\sigma(\kappa)}}
\left\{ \exp\left(-\frac{x^2}{2f''_\sigma(\kappa)}\right)
+O\left(\kappa^{-\frac{1}{2\sigma}} \sqrt{\log{\kappa}}\right) \right\}. 
\end{gather}
To show this, we use the inverse formula
\begin{gather}\label{eq:03200030}
\mathcal{N}_\sigma(x;\tau)
=\int_{\mathbb{R}} \widetilde{\mathcal{N}}_\sigma(v;\tau) e^{-ixv} \,|dv|
\end{gather}
which is justified by Lemmas \ref{lem:4.4} and \ref{lem:4.5}. 
Note that $\widetilde{\mathcal{N}}_\sigma(v;\tau)$ is represented as 
\begin{gather*}
\widetilde{\mathcal{N}}_\sigma(v;\tau)
=\exp\left(-\frac{f''_\sigma(\kappa)}{2}v^2\right) W(iv)
\end{gather*}
by \eqref{eq:03191630}, where $W$ is the following entire function: 
\begin{gather*}
W(z)
=\exp\left(-\tau z-\frac{f''_\sigma(\kappa)}{2}z^2\right)
\frac{F_\sigma(z+\kappa)}{F_\sigma(\kappa)}. 
\end{gather*}
Put $\lambda=\kappa^{1-\frac{1}{3}\sigma}$. 
Then we have 
\begin{align*}
\int_{-\lambda}^{\lambda} \widetilde{\mathcal{N}}_\sigma(v;\tau) e^{-ixv} \,|dv|
&=\int_{-\lambda}^{\lambda} \exp\left(-\frac{f''_\sigma(\kappa)}{2}v^2\right) e^{-ixv} \,|dv| \\
&\qquad+\int_{-\lambda}^{\lambda} \exp\left(-\frac{f''_\sigma(\kappa)}{2}v^2\right) 
(W(iv)-1) e^{-ixv} \,|dv| \\
&=I_{1,1}+I_{1,2}, 
\end{align*}
say. 
For any $a,b>0$ with $ab^2>1$, we have 
\begin{gather*}
\int_{-\infty}^{\infty} \exp(-av^2) e^{-ixv} \,|dv|
=\frac{1}{\sqrt{2a}} \exp\left(-\frac{x^2}{4a}\right), \\
\int_{b}^{\infty} \exp(-av^2) \,|dv|
\ll \frac{1}{\sqrt{2a}} \exp(-ab^2). 
\end{gather*}
Therefore the first integral $I_{1,1}$ is estimated as
\begin{gather}\label{eq:03200017}
I_{1,1}
=\frac{1}{\sqrt{f''_\sigma(\kappa)}} \left\{\exp\left(-\frac{x^2}{2f''_\sigma(\kappa)}\right)
+O\left(\exp\left(-\frac{f''_\sigma(\kappa)}{2} \lambda^2\right)\right) \right\}
\end{gather}
since $f''_\sigma(\kappa) \lambda^2 \asymp \kappa^{\frac{1}{3\sigma}}(\log{\kappa})^{-1} \to\infty$ as $\kappa \to\infty$ by Propositions \ref{prop:3.1} and \ref{prop:3.2}. 
The second integral $I_{1,2}$ is evaluated as follows. 
We see that $W(z)$ is represented as 
\begin{align*}
W(z)
&=\exp\left(f_\sigma(z+\kappa)-f_\sigma(\kappa)-f'_\sigma(\kappa)z-\frac{f''_\sigma(\kappa)}{2}z^2\right) \\
&=\exp\left(\sum_{j=3}^{\infty} \frac{f^{(j)}_\sigma(\kappa)}{j!}z^j\right)
\end{align*}
at least near the origin. 
Thus we have $W(z)=1+\sum_{j \geq3} w_j z^j/j!$, where 
\begin{gather*}
w_j
=\sum_{k=1}^{\lfloor j/3 \rfloor} \frac{1}{k!}
\sum_{\substack{j_1+\cdots+j_k=j \\ \forall k,~ j_k \geq3}}
\binom{j}{j_1,\ldots,j_k} f^{(j_1)}_\sigma(\kappa) \cdots f^{(j_k)}_\sigma(\kappa). 
\end{gather*}
Additionally, we have the inequality
\begin{align}\label{eq:05040155}
&1+\sum_{j \geq3}\frac{|w_j|}{j!}|z|^j \\
&\leq1+\sum_{j \geq3}\frac{1}{j!} 
\left\{\sum_{k=1}^{\lfloor j/3 \rfloor} \frac{1}{k!}
\sum_{\substack{j_1+\cdots+j_k=j \\ \forall k,~ j_k \geq3}}
\binom{j}{j_1,\ldots,j_k} |f^{(j_1)}_\sigma(\kappa)| \cdots |f^{(j_k)}_\sigma(\kappa)| \right\} |z|^j \nonumber\\
&=\exp\left(\sum_{j=3}^{\infty} \frac{|f^{(j)}_\sigma(\kappa)|}{j!}|z|^j\right) \nonumber
\end{align}
for any $z \in \mathbb{C}$. 
Then, we deduce from Propositions \ref{prop:3.1} and \ref{prop:3.2} the upper bounds
\begin{gather*}
f^{(j)}_\sigma(\kappa)
\ll 2^jj! \frac{\kappa^{\frac{1}{\sigma}-j}}{\log{\kappa}}
\end{gather*}
for all $j \geq3$ by recalling that $\mathfrak{g}_{0,j}(\sigma) \ll j!$ and $\mathfrak{g}_{0,j} \ll j!$. 
By this, we obtain
\begin{gather*}
\sum_{j=3}^{\infty} \frac{|f^{(j)}_\sigma(\kappa)|}{j!}|iv|^j
\ll \frac{\kappa^{\frac{1}{\sigma}}}{\log{\kappa}} \sum_{j=3}^{\infty} \left(\frac{2|v|}{\kappa}\right)^j
\ll \frac{\kappa^{\frac{1}{\sigma}-3}}{\log{\kappa}} |v|^3 
\end{gather*}
for $|v| \leq \lambda$ with the implied constant depending only on $\sigma$. 
Remark that $\kappa^{\frac{1}{\sigma}-3}|v|^3$ is bounded for $|v| \leq \lambda$. 
Hence we obtain from \eqref{eq:05040155} that
\begin{gather*}
\sum_{j \geq3}\frac{|w_j|}{j!}|v|^j
\leq \sum_{n=1}^{\infty} \frac{1}{n!} 
\left(C\frac{\kappa^{\frac{1}{\sigma}-3}}{\log{\kappa}}|v|^3\right)^n
\ll \frac{\kappa^{\frac{1}{\sigma}-3}}{\log{\kappa}}|v|^3, 
\end{gather*}
where $C=C(\sigma)$ is a positive constant. 
As a result, we evaluate $I_{1,2}$ as 
\begin{align*}
|I_{1,2}|
&\leq \int_{-\lambda}^{\lambda} \exp\left(-\frac{f''_\sigma(\kappa)}{2}v^2\right) 
\left(\sum_{j \geq3}\frac{|w_j|}{j!}|v|^j\right) \,|dv| \\
&\ll \frac{\kappa^{\frac{1}{\sigma}-3}}{\log{\kappa}}
\int_{0}^{\infty} \exp\left(-\frac{f''_\sigma(\kappa)}{2}v^2\right) v^3 \,dv \\
&\ll \frac{\kappa^{\frac{1}{\sigma}-3}}{\log{\kappa}} \frac{1}{f''_\sigma(\kappa)^2}. 
\end{align*}
We further note that Propositions \ref{prop:3.1} and \ref{prop:3.2} provide the bound
\begin{gather*}
\frac{\kappa^{\frac{1}{\sigma}-3}}{\log{\kappa}} \frac{1}{\sqrt{f''_\sigma(\kappa)}^3}
\ll \kappa^{-\frac{1}{2\sigma}} \sqrt{\log{\kappa}}. 
\end{gather*}
Therefore, the integral $I_{1,2}$ is evaluated as
\begin{gather}\label{eq:03200018}
I_{1,2}
\ll \frac{1}{\sqrt{f''_\sigma(\kappa)}} \kappa^{-\frac{1}{2\sigma}} \sqrt{\log{\kappa}}. 
\end{gather}
Combining \eqref{eq:03200017} and \eqref{eq:03200018}, we derive
\begin{gather*}
\int_{-\lambda}^{\lambda} \widetilde{\mathcal{N}}_\sigma(v;\tau) e^{-ixv} \,|dv|
=\frac{1}{\sqrt{f''_\sigma(\kappa)}}
\left\{ \exp\left(-\frac{x^2}{2f''_\sigma(\kappa)}\right)
+O\left(\kappa^{-\frac{1}{2\sigma}} \sqrt{\log{\kappa}}\right) \right\}. 
\end{gather*}
Thus, the remaining work is to bound the integral
\begin{gather*}
I_2
=\int_{|v|>\lambda} 
\widetilde{\mathcal{N}}_\sigma(v;\tau) e^{-ixv} \,|dv|. 
\end{gather*}
Let $c_1(\sigma)$, $c_2(\sigma)$, and $c_3(\sigma,c)$ denote the positive constants of Lemmas \ref{lem:4.5} and \ref{lem:4.6}, and put $c_3(\sigma)=c_3(\sigma, c_1(\sigma))$. 
By these lemmas, we have 
\begin{align*}
I_2
&\ll \int_{\lambda}^{c_1(\sigma)\kappa}
\exp\left(-c_2(\sigma)v^2\frac{\kappa^{\frac{1}{\sigma}-2}}{\log{\kappa}}\right) \,dv
+\int_{c_1(\sigma)\kappa}^{\infty}
\exp\left(-c_3(\sigma)\frac{v^{\frac{1}{\sigma}}}{\log{v}}\right) \,dv \\
&\ll \exp\left(-c'_2(\sigma)\frac{\kappa^{\frac{1}{3\sigma}}}{\log{\kappa}}\right)
+\exp\left(-c'_3(\sigma)\frac{\kappa^{\frac{1}{\sigma}}}{\log{\kappa}}\right), 
\end{align*}
where $c'_2(\sigma)$ and $c'_3(\sigma)$ are positive constants. 
Finally, we again use Propositions \ref{prop:3.1} and \ref{prop:3.2} to deduce
\begin{gather*}
I_2
\ll \frac{1}{\sqrt{f''_\sigma(\kappa)}} \kappa^{-\frac{1}{2\sigma}} \sqrt{\log{\kappa}}. 
\end{gather*}
Hence we obtain the result by formula \eqref{eq:03200030}. 
\end{proof}

\subsection{Corollaries}\label{sec:4.3}
As stated in Section \ref{sec:1.1}, one can deduce from Theorem \ref{thm:1.1} several results on the distribution function $\Phi(\sigma,\tau)$ for $1/2<\sigma \leq1$. 

\begin{corollary}\label{cor:4.7}
With the same assumption as in Theorem \ref{thm:1.1}, we obtain
\begin{gather*}
\Phi(\sigma,\tau)
=\frac{F_\sigma(\kappa) e^{-\kappa\tau}}{\kappa \sqrt{2\pi f''_\sigma(\kappa)}}
\left\{ 1+O\left(\kappa^{-\frac{1}{2\sigma}} \sqrt{\log{\kappa}}\right) \right\}
\end{gather*}
if $\tau>0$ is large enough, where the implied constant depends on $\sigma$. 
\end{corollary}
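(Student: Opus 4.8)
The plan is simply to integrate the pointwise formula \eqref{eq:02152104} of Theorem \ref{thm:1.1} against the measure $|dx|=(2\pi)^{-1/2}dx$ over the half-line $x\geq0$. Starting from the identity $\Phi(\sigma,\tau)=\int_{\tau}^{\infty}\mathcal{M}_\sigma(x)\,|dx|=\int_{0}^{\infty}\mathcal{M}_\sigma(\tau+x)\,|dx|$ of \eqref{eq:05111549} and inserting \eqref{eq:02152104}, one writes $\Phi(\sigma,\tau)$ as the common prefactor $F_\sigma(\kappa)e^{-\kappa\tau}/\sqrt{f''_\sigma(\kappa)}$ times the sum of a \emph{main term} $\int_{0}^{\infty}e^{-\kappa x}\exp\!\big(-x^2/(2f''_\sigma(\kappa))\big)\,|dx|$ and an \emph{error term} $O\big(\kappa^{-1/(2\sigma)}\sqrt{\log\kappa}\big)\int_{0}^{\infty}e^{-\kappa x}\,|dx|$. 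Recall from Lemma \ref{lem:4.1} that $\kappa=\kappa(\sigma,\tau)\to\infty$ as $\tau\to\infty$, so ``$\tau$ large'' and ``$\kappa$ large'' are interchangeable.

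First I would handle the main-term integral by a Laplace-type argument. The weight $e^{-\kappa x}$ concentrates $x$ at scale $\kappa^{-1}$, whereas $f''_\sigma(\kappa)\to\infty$ by Propositions \ref{prop:3.1} and \ref{prop:3.2}, so the Gaussian factor is essentially constant on the relevant range; quantitatively one uses $\exp\!\big(-x^2/(2f''_\sigma(\kappa))\big)=1+O\big(x^2/f''_\sigma(\kappa)\big)$ together with the elementary identities $\int_{0}^{\infty}e^{-\kappa x}\,dx=\kappa^{-1}$ and $\int_{0}^{\infty}x^2e^{-\kappa x}\,dx=2\kappa^{-3}$. This gives $\int_{0}^{\infty}e^{-\kappa x}\exp\!\big(-x^2/(2f''_\sigma(\kappa))\big)\,|dx|=(2\pi)^{-1/2}\kappa^{-1}\big(1+O(1/(\kappa^2f''_\sigma(\kappa)))\big)$. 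For the error term, multiplying the $O\big(\kappa^{-1/(2\sigma)}\sqrt{\log\kappa}\big)$ contribution by $e^{-\kappa x}$ and integrating produces $(2\pi)^{-1/2}\kappa^{-1}O\big(\kappa^{-1/(2\sigma)}\sqrt{\log\kappa}\big)$, i.e.\ a contribution of relative size $O\big(\kappa^{-1/(2\sigma)}\sqrt{\log\kappa}\big)$.

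Collecting the two pieces yields
\begin{gather*}
\Phi(\sigma,\tau)
=\frac{F_\sigma(\kappa)e^{-\kappa\tau}}{\kappa\sqrt{2\pi f''_\sigma(\kappa)}}
\left(1+O\!\left(\frac{1}{\kappa^2 f''_\sigma(\kappa)}\right)+O\!\left(\kappa^{-\frac{1}{2\sigma}}\sqrt{\log\kappa}\right)\right).
\end{gather*}
It then remains to absorb the first error term into the second. By Propositions \ref{prop:3.1} and \ref{prop:3.2} one has $f''_\sigma(\kappa)\gg\kappa^{1/\sigma-2}/\log\kappa$ for $1/2<\sigma\leq1$, hence $1/(\kappa^2 f''_\sigma(\kappa))\ll\kappa^{-1/\sigma}\log\kappa$, and since $\kappa^{-1/\sigma}\log\kappa=\kappa^{-1/(2\sigma)}\sqrt{\log\kappa}\cdot\kappa^{-1/(2\sigma)}\sqrt{\log\kappa}$ this is dominated by $\kappa^{-1/(2\sigma)}\sqrt{\log\kappa}$ once $\kappa$ is large enough. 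This gives the asserted formula.

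I do not anticipate a serious obstacle here; the argument is a routine saddle-point-type integration once Theorem \ref{thm:1.1} is in hand (indeed it merely formalizes the remark following that theorem). The one point meriting care is the uniformity: \eqref{eq:02152104} holds uniformly for \emph{all} $x\in\mathbb{R}$, so the range of large $x$ — where the Gaussian ``main'' term is in fact smaller than the error term — causes no difficulty, as the rapidly decaying factor $e^{-\kappa x}$ ensures absolute convergence of every integral and renders the tail contribution negligible.
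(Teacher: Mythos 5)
Your proof is correct and takes essentially the same route as the paper: insert the Theorem \ref{thm:1.1} formula into $\Phi(\sigma,\tau)=\int_0^\infty\mathcal{M}_\sigma(\tau+x)\,|dx|$, estimate the main Laplace-type integral to obtain a factor $\kappa^{-1}\bigl(1+O(1/(\kappa^2 f''_\sigma(\kappa)))\bigr)$, integrate the error term to produce $\kappa^{-1}O(\kappa^{-1/(2\sigma)}\sqrt{\log\kappa})$, and then absorb the first error via $f''_\sigma(\kappa)\asymp\kappa^{1/\sigma-2}/\log\kappa$. The only cosmetic difference is that the paper first rescales $x\mapsto x/\kappa$ before expanding the Gaussian factor, whereas you expand directly; the estimates are identical.
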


\begin{proof}
By Theorem \ref{thm:1.1}, we calculate $\Phi(\sigma,\tau)$ as
\begin{align*}
\Phi(\sigma,\tau)
&=\int_{0}^{\infty} \mathcal{M}_\sigma(\tau+x) \,|dx| \\
&=\frac{F_\sigma(\kappa) e^{-\kappa \tau}}{\sqrt{f''_\sigma(\kappa)}}
\bigg\{ \int_{0}^{\infty} \exp\left(-\kappa x-\frac{x^2}{2f''_\sigma(\kappa)}\right) \,|dx| \\
&\qquad\qquad\qquad\qquad\qquad 
+O\left(\kappa^{-\frac{1}{2\sigma}} \sqrt{\log{\kappa}} 
\int_{0}^{\infty} e^{-\kappa x} \,|dx| \right) \bigg\} \\
&=\frac{F_\sigma(\kappa) e^{-\kappa \tau}}{\kappa \sqrt{2\pi f''_\sigma(\kappa)}}
\left\{\int_{0}^{\infty} \exp\left(-x-\frac{x^2}{2\kappa^2 f''_\sigma(\kappa)}\right) \,dx 
+O\left(\kappa^{-\frac{1}{2\sigma}} \sqrt{\log{\kappa}}\right)\right\}. 
\end{align*}
Furthermore, we have the asymptotic formula
\begin{align*}
\int_{0}^{\infty} \exp\left(-x-\frac{x^2}{2\kappa^2 f''_\sigma(\kappa)}\right) \,dx
&=1+O\left(\frac{1}{\kappa^2 f''_\sigma(\kappa)}\right) \\
&=1+O\left(\kappa^{-\frac{1}{\sigma}} \log{\kappa}\right)
\end{align*}
by using Propositions \ref{prop:3.1} and \ref{prop:3.2}. 
Hence we obtain the conclusion.  
\end{proof}

We further deduce from Corollary \ref{cor:4.7} the formula
\begin{align*}
\log{\Phi(\sigma,\tau)}
&=f_\sigma(\kappa)-\kappa \tau+O\left(\log{\kappa}+\log{f''_\sigma(\kappa)}\right) \\
&=f_\sigma(\kappa)-\kappa f'_\sigma(\kappa)+O(\log{\kappa})
\end{align*}
for $1/2<\sigma \leq1$ by recalling $\tau=f'_\sigma(\kappa)$ and $f''_\sigma(\kappa) \asymp \kappa^{\frac{1-2\sigma}{\sigma}}(\log{\kappa})^{-1}$. 
By this, we prove Corollaries \ref{cor:1.2} and \ref{cor:1.3} as below. 

\begin{proof}[Proof of Corollary \ref{cor:1.2}]
Let $1/2<\sigma<1$. 
Then it follows from Proposition \ref{prop:3.1} that 
\begin{align}\label{eq:03201422}
\log{\Phi(\sigma,\tau)}
&=-\frac{\kappa^{\frac{1}{\sigma}}}{\log{\kappa}}
\left\{\sum_{n=0}^{N-1} \frac{c_n(\sigma)}{(\log{\kappa})^n}
+O\left( \frac{1}{(\log{\kappa})^N} \right)\right\} \\
&=-c_0(\sigma)\frac{\kappa^{\frac{1}{\sigma}}}{\log{\kappa}}
\left\{1+\sum_{n=1}^{N-1} \frac{c_n(\sigma)}{c_0(\sigma)} \frac{1}{(\log{\kappa})^n}
+O\left( \frac{1}{(\log{\kappa})^N} \right)\right\} \nonumber
\end{align}
holds for any $N \in \mathbb{Z}_{\geq1}$, where we put $c_n(\sigma)=\mathfrak{g}_{n,1}(\sigma)-\mathfrak{g}_{n,0}(\sigma)$. 
Here, we interpret $\sum_{n=1}^{N-1}=0$ if $N=1$. 
Moreover, we have 
\begin{gather*}
\kappa^{\frac{1}{\sigma}}
=B(\sigma)^{\frac{1}{\sigma}} (\tau \log{\tau})^{\frac{1}{1-\sigma}} 
\Bigg\{1+\sum_{n=1}^{N-1} \frac{D_{n,\sigma}(\log_2{\tau})}{(\log{\tau})^n} 
+O\left(\left(\frac{\log_2{\tau}}{\log{\tau}}\right)^{N}\right)\Bigg\}, \\
\log{\kappa}
=\frac{\sigma}{1-\sigma} (\log{\tau})
\left\{1+\sum_{n=1}^{N-1} \frac{D_{n,\sigma}^*(\log_2{\tau})}{(\log{\tau})^n} 
+O\left(\left(\frac{\log_2{\tau}}{\log{\tau}}\right)^{N}\right)\right\}
\end{gather*}
by Proposition \ref{prop:4.2}, where $B(\sigma)$ is determined as \eqref{eq:03201406}, and $D_{n,\sigma}(x)$ and $D_{n,\sigma}^*(x)$ are polynomials of degree at most $n$. 
One can calculate the polynomials when $n=1$ as
\begin{align}
D_{1,\sigma}(x)
&=\frac{1}{\sigma} B_{1,\sigma}(x) \label{eq:03201644} \\ 
&=\frac{1}{1-\sigma}x-\frac{1}{1-\sigma}\log\left(\frac{1-\sigma}{\sigma}\mathfrak{g}_{0,1}(\sigma)\right)
-\frac{1}{\sigma}\frac{\mathfrak{g}_{1,1}(\sigma)}{\mathfrak{g}_{0,1}(\sigma)}, \nonumber \\
D_{1,\sigma}^*(x)
&=x+\frac{1-\sigma}{\sigma} \log{B(\sigma)} \label{eq:03201645} \\
&=x-\log\left(\frac{1-\sigma}{\sigma}\mathfrak{g}_{0,1}(\sigma)\right) \nonumber
\end{align}
with $B_{1,\sigma}(x)$ as in Proposition \ref{prop:4.2}. 
Then, \eqref{eq:03201422} derives 
\begin{gather*}
\log{\Phi(\sigma,\tau)}
=-A(\sigma)\tau^{\frac{1}{1-\sigma}} (\log{\tau})^{\frac{\sigma}{1-\sigma}} 
\left\{ \sum_{n=0}^{N-1} \frac{A_{n,\sigma}(\log_2{\tau})}{(\log{\tau})^n}
+O\left( \left(\frac{\log_2{\tau}}{\log{\tau}}\right)^N \right) \right\},
\end{gather*}
where $A(\sigma)$ is given by $A(\sigma)=c_0(\sigma)B(\sigma)^{\frac{1}{\sigma}} (1-\sigma)/\sigma$, and $A_{n,\sigma}(x)$ are polynomials of degree at most $n$ such that $A_{0,\sigma}(x)=1$. 
Note that $\mathfrak{g}_{0,0}(\sigma)$ and $\mathfrak{g}_{0,1}(\sigma)$ satisfy the relations
\begin{gather*}
\mathfrak{g}_{0,0}(\sigma)
=\frac{1}{\sigma} \mathfrak{g}_{0,1}(\sigma)
\quad\text{and}\quad
\mathfrak{g}_{0,1}(\sigma)
=\int_{0}^{\infty} g(y^{-\sigma}) \,dy
\end{gather*}
by definition.  
Hence $A(\sigma)$ is represented as
\begin{gather*}
A(\sigma)
=c_0(\sigma)B(\sigma)^{\frac{1}{\sigma}} \frac{1-\sigma}{\sigma} 
=(1-\sigma)
\left(\frac{1-\sigma}{\sigma} \mathfrak{g}_{0,1}(\sigma)\right)^{-\frac{1}{1-\sigma}}. 
\end{gather*}
Finally, we calculate the polynomial $A_{1,\sigma}(x)$. 
By formula \eqref{eq:03201422}, we have 
\begin{align*}
\log{\Phi(\sigma,\tau)}
&=-c_0(\sigma) B(\sigma)^{\frac{1}{\sigma}} (\tau \log{\tau})^{\frac{1}{1-\sigma}} 
\Bigg\{1+\frac{D_{1,\sigma}(\log_2{\tau})}{\log{\tau}} 
+O\left(\left(\frac{\log_2{\tau}}{\log{\tau}}\right)^{2}\right)\Bigg\} \\
& \qquad
\times \frac{1-\sigma}{\sigma} (\log{\tau})^{-1}
\Bigg\{1-\frac{D_{1,\sigma}^*(\log_2{\tau})}{\log{\tau}} 
+O\left(\left(\frac{\log_2{\tau}}{\log{\tau}}\right)^{2}\right)\Bigg\} \\
& \qquad\qquad
\times \Bigg\{1+\frac{c_1(\sigma)}{c_0(\sigma)} \frac{1-\sigma}{\sigma} (\log{\tau})^{-1} 
+O\left(\left(\frac{\log_2{\tau}}{\log{\tau}}\right)^{2}\right)\Bigg\} \\
&=A(\sigma)\tau^{\frac{1}{1-\sigma}} (\log{\tau})^{\frac{\sigma}{1-\sigma}} 
\left\{ \frac{A_{1,\sigma}(\log_2{\tau})}{\log{\tau}}
+O\left( \left(\frac{\log_2{\tau}}{\log{\tau}}\right)^2 \right) \right\}, 
\end{align*}
where
\begin{align*}
A_{1,\sigma}(x)
&=D_{1,\sigma}(x)-D_{1,\sigma}^*(x)+\frac{c_1(\sigma)}{c_0(\sigma)} \frac{1-\sigma}{\sigma} \\
&=D_{1,\sigma}(x)-D_{1,\sigma}^*(x)
+\frac{1}{\sigma} \frac{\mathfrak{g}_{1,1}(\sigma)-\mathfrak{g}_{1,0}(\sigma)}{\mathfrak{g}_{0,1}(\sigma)}. 
\end{align*}
Using \eqref{eq:03201644} and \eqref{eq:03201645}, we have
\begin{gather*}
A_{1,\sigma}(x)
=\frac{\sigma}{1-\sigma}x
-\frac{\sigma}{1-\sigma}\log\left(\frac{1-\sigma}{\sigma}\mathfrak{g}_{0,1}(\sigma)\right)
-\frac{1}{\sigma}\frac{\mathfrak{g}_{1,0}(\sigma)}{\mathfrak{g}_{0,1}(\sigma)}. 
\end{gather*}
Thus we obtain the desired representation of $A_{1,\sigma}(x)$ by noting that $\mathfrak{g}_{1,0}(\sigma)$ satisfies
\begin{gather*}
\mathfrak{g}_{1,0}(\sigma)
=-\sigma \int_{0}^{\infty} g(y^{-\sigma}) \log{y} \,dy. 
\end{gather*}
\end{proof}

\begin{proof}[Proof of Corollary \ref{cor:1.2}]
In this case, we apply Proposition \ref{prop:3.2} to deduce
\begin{align}\label{eq:03241158}
\log{\Phi(1,\tau)}
&=-\frac{\kappa}{\log{\kappa}}
\left\{\sum_{n=0}^{N-1} \frac{c_n}{(\log{\kappa})^n}
+O\left( \frac{1}{(\log{\kappa})^N} \right)\right\} \\
&=-\frac{2\kappa}{\log{\kappa}}
\left\{1+\sum_{n=1}^{N-1} \frac{c_n}{2} \frac{1}{(\log{\kappa})^n}
+O\left( \frac{1}{(\log{\kappa})^N} \right)\right\}, \nonumber
\end{align}
where $c_n=\mathfrak{g}_{n,1}-\mathfrak{g}_{n,0}$. 
Here, we remark that $c_0=2$ since we have 
\begin{gather*}
\mathfrak{g}_{0,1}
=\lim_{\epsilon \to +0} 
\left(\left[ \frac{g_*(u)}{u}\right]_0^{1-\epsilon}
+\left[ \frac{g_*(u)}{u}\right]_{1+\epsilon}^{\infty}\right)
+\int_{0}^{\infty} \frac{g_*(u)}{u^2} \,du
=2+\mathfrak{g}_{0,0}
\end{gather*}
by integrating by parts. 
Furthermore, we deduce from Proposition \ref{prop:4.3} and \eqref{eq:03201702} the asymptotic formulas
\begin{gather*}
\kappa
=\exp\left(t-\frac{1}{2}\mathfrak{g}_{0,1}\right) 
\left\{1+\frac{b_1}{t}+\sum_{n=2}^{N-1} \frac{b_n}{t^n}
+O\left(\frac{1}{t^N}\right) \right\}, \\
\log{\kappa}
=t \left\{1-\frac{\mathfrak{g}_{0,1}}{2t}
-\sum_{n=2}^{N-1} \frac{\beta_{n-1}}{t^n}
+O\left(\frac{1}{t^N}\right)\right\}, 
\end{gather*}
where $b_n$ and $\beta_n$ are real numbers such that $b_1=-\beta_1=-\frac{1}{8}\mathfrak{g}_{0,1}^2-\frac{1}{2}\mathfrak{g}_{1,1}$. 
Using these formulas, we obtain
\begin{align*}
\log{\Phi(1,\tau)}
=-\frac{e^{t-A}}{t}
\left\{ \sum_{n=0}^{N-1} \frac{a_n}{t^n}
+O\left(\frac{1}{t^N}\right) \right\}
\end{align*}
by \eqref{eq:03241158}, where $A$ is given by
\begin{gather*}
A
=\frac{1}{2}\mathfrak{g}_{0,1}-\log{2}
=1+\frac{1}{2}\mathfrak{g}_{0,0}-\log{2}, 
\end{gather*}
and $a_n$ are real numbers such that $a_0=1$ and
\begin{gather*}
a_1
=b_1+\frac{1}{2}\mathfrak{g}_{0,1}+\frac{1}{2}c_1
=-\frac{1}{8}\mathfrak{g}_{0,1}^2
-\frac{1}{2}\mathfrak{g}_{1,0}
+\frac{1}{2}. 
\end{gather*}
Finally, we see that $\mathfrak{g}_{0,1}$ and $\mathfrak{g}_{1,0}$ are represented as
\begin{gather*}
\mathfrak{g}_{0,1}
=\int_{0}^{\infty} g_*(y^{-1}) \,dy
\quad\text{and}\quad
\mathfrak{g}_{1,0}
=-\int_{0}^{\infty} g_*(y^{-1}) \log{y} \,dy. 
\end{gather*}
Hence we complete the proof. 
\end{proof}

\begin{remark}\label{rem:4.8}
The constant $A$ of this paper is consistent with the constant $A_k$ of Lamzouri \cite[Theorem 0.2]{Lamzouri2010} when $k=1$, while he represented it in a slightly different form. 
Especially, we have 
\begin{gather*}
A_1
=1+\int_{0}^{\infty} \frac{h_*(u)}{u^2} \,du
\end{gather*}
according to the representation by Lamzouri, where $h_*(u)$ is defined as
\begin{gather*}
h_*(u)
=
\begin{cases}
h(u)
& \text{if $0<u<1$}, 
\\
h(u)-u
& \text{if $u \geq 1$} 
\end{cases}
\end{gather*}
by using the cumulant-generating function
\begin{gather*}
h(u)
=\log \left(\frac{2}{\pi}\int_{0}^{\pi} \exp(u\cos \theta) \sin^2 \theta \,d \theta\right). 
\end{gather*}
Then we have $h(2u)=g(u)$, and therefore, we see that 
\begin{align*}
A_1
&=1+\frac{1}{2}\int_{0}^{\infty} \frac{h_*(2u)}{u^2} \,du \\
&=1+\frac{1}{2}\int_{0}^{\infty} \frac{g_*(u)}{u^2} \,du
+\frac{1}{2}\int_{1/2}^{1} \frac{h_*(2u)-g_*(u)}{u^2} \,du\\
&=1+\frac{1}{2}\int_{0}^{\infty} \frac{g_*(u)}{u^2} \,du-\log{2}
\end{align*}
which equals to $A$ of this paper. 
We further remark that $A$ should be consistent with the constant $\gamma_0$ of Liu--Royer--Wu \cite[Corollary 1.5]{LiuRoyerWu2008}. 
However, it appears that they miscalculated the value of $\gamma_0$ by forgetting the term $-\log{2}$. 
\end{remark}

\section{Comparisons of distribution functions}\label{sec:5}
In this section, we prove Theorem \ref{thm:1.4} and its corollaries. 
For this, we apply the following asymptotic formula on the complex moments of $L(\sigma,f)$ which was obtained in the previous paper of the author \cite{Mine2020+}. 

\begin{proposition}\label{prop:5.1}
Let $1/2<\sigma \leq1$ and $B \geq1$. 
Then there exist positive constants $a=a(\sigma,B)$, $b=b(\sigma,B)$ and a subset $\mathcal{E}_q=\mathcal{E}_q(\sigma,B)$ of $B_2(q)$ such that 
\begin{gather}\label{eq:03210326}
\frac{1}{\# B_2(q)} \sum_{f \in B_2(q) \setminus \mathcal{E}_q} L(\sigma,f)^s
=F_\sigma(s)
+O\left(\frac{F_\sigma(\kappa)}{(\log{q})^{B+2}}\right)
\end{gather}
holds uniformly for $s=\kappa+iv \in \mathbb{C}$ with $|s| \leq a R_\sigma(q)$, where we define
\begin{gather*}
R_\sigma(q) 
=
\begin{cases}
(\log{q})^\sigma &{\text{if $1/2<\sigma<1$}}, \\ 
(\log{q})(\log_2{q}\,\log_3{q})^{-1} &{\text{if $\sigma=1$}}.
\end{cases}
\end{gather*}
Furthermore, we have 
\begin{gather}\label{eq:03210327}
\# \mathcal{E}_q
\ll q\exp\left(-b \frac{\log{q}}{\log_2{q}}\right). 
\end{gather}
The implied constants in \eqref{eq:03210326} and \eqref{eq:03210327} depend on $\sigma$ and $B$. 
\end{proposition}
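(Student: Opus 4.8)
The statement is established in the author's previous paper \cite{Mine2020+}; here I outline the strategy one would follow. The plan is to reduce the complex moment $(\#B_2(q))^{-1}\sum_{f}L(\sigma,f)^s$ to the average over $B_2(q)$ of a short Euler product, and then to evaluate that average by combining the Petersson trace formula with the equidistribution of Satake angles governed by the measures $\mu_p$ of \eqref{eq:02271312}. First, for $s=\kappa+iv$ with $\kappa>0$ in the allowed range, I would split $B_2(q)$ into a ``generic'' part and an exceptional set $\mathcal{E}_q$. The set $\mathcal{E}_q$ should collect the $f$ for which $L(w,f)$ has a zero too close to the segment $\RE(w)=\sigma$ (so that the Dirichlet polynomial approximation below fails) together with the $f$ for which $|L(\sigma,f)|$ is anomalously large. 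For $f\notin\mathcal{E}_q$ one expects the multiplicative approximation
\begin{gather*}
L(\sigma,f)^s
=\prod_{p\le y}\bigl(1-\lambda_f(p)p^{-\sigma}+p^{-2\sigma}\bigr)^{-s}\,
\bigl(1+O\bigl((\log q)^{-C}\bigr)\bigr)
\end{gather*}
with $y$ a suitable power of $\log q$ and the error term genuinely relative, coming from a zero-free region for $L(w,f)$ to the right of $\RE(w)=\sigma$ valid off $\mathcal{E}_q$, in the style of the Granville--Soundararajan and Lamzouri treatments of such families \cite{GranvilleSoundararajan2003,Lamzouri2010}.

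Next I would expand the truncated Euler product into a Dirichlet polynomial $\sum_{n\le N}c_s(n)\lambda_f(n)n^{-\sigma}$ supported on $y$-smooth integers $n\le N$ (with $N$ at most a fixed small power of $q$), the coefficients $c_s(n)$ arising from the binomial series for $(1-x)^{-s}$ together with the Hecke multiplicativity of $\lambda_f$; the weighted $\ell^1$-norm $\sum_n|c_s(n)|n^{-\sigma}$ is controlled by $F_\sigma(\kappa)$-type quantities via the estimates of Section~\ref{sec:3}. Averaging over $f\in B_2(q)$, I would pass from the natural count to the harmonic average, writing $(\#B_2(q))^{-1}$ through $\omega_f$ and $L(1,\mathrm{sym}^2 f)$ in the style of Iwaniec--Luo--Sarnak and Kowalski--Michel (cf.\ \cite{CogdellMichel2004}), and then apply the Petersson formula: since $N$ is small the off-diagonal Kloosterman contributions are negligible, and the diagonal part reconstitutes $\prod_{p\le y}\int_0^\pi\exp\bigl(2s\lambda_{p,\sigma}(\theta)\bigr)\,d\mu_p(\theta)=\prod_{p\le y}F_{\sigma,p}(s)$, the arithmetic factor in $\mu_p$ being exactly what the $\mathrm{sym}^2$-weight produces. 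The missing tail $\prod_{p>y}F_{\sigma,p}(s)$ recovers the remaining Euler factors of $F_\sigma(s)$ up to $1+O((\log q)^{-C})$ by Lemma~\ref{lem:3.5} and \eqref{eq:02262021}. Altogether this yields $F_\sigma(s)+O\bigl(F_\sigma(\kappa)(\log q)^{-B-2}\bigr)$ on the generic part, after choosing $y$, $N$, $C$ large in terms of $B$.

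It remains to control $\mathcal{E}_q$ and its contribution to the moment. For the bound $\#\mathcal{E}_q\ll q\exp(-b\log q/\log_2 q)$ I would combine a zero-density estimate for the family $\{L(w,f)\}_{f\in B_2(q)}$ near $\RE(w)=\sigma$ (obtained from mean values of Dirichlet polynomials twisted by $\lambda_f(n)$ via Petersson, together with a Hal\'asz--Montgomery large-values argument) with a high-moment bound $\sum_{f\in B_2(q)}|L(\sigma,f)|^{2k}\ll\#B_2(q)\,C_\sigma^{\,k}$ valid for $k\le c\log q/\log_2 q$. The contribution of $f\in\mathcal{E}_q$ to the moment is then estimated by Cauchy--Schwarz against this high-moment bound and the cardinality bound for $\mathcal{E}_q$, and this is precisely where the restriction $|s|\le aR_\sigma(q)$ enters: it is what forces that contribution to be $\ll F_\sigma(\kappa)(\log q)^{-B-2}$.

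The hard part will be this last step: all errors must be relative to $F_\sigma(\kappa)$, which itself grows like $\exp(\kappa^{1/\sigma}/\log\kappa)$, so the exceptional set has to be small enough to absorb the worst-case size of $L(\sigma,f)^\kappa$ after normalizing by $F_\sigma(\kappa)$. Balancing the truncation length $y$, the polynomial length $N$, the width of the zero-density region, and the moment exponent $k\asymp\log q/\log_2 q$ against $B$ is what pins down the admissible range, and in particular explains why $R_\sigma(q)$ must carry the extra factor $(\log_2 q\,\log_3 q)^{-1}$ when $\sigma=1$.
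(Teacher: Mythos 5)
The paper does not prove Proposition~\ref{prop:5.1}: Section~\ref{sec:5} opens by stating that the asymptotic formula for the complex moments ``was obtained in the previous paper of the author \cite{Mine2020+},'' and the result is then used as a black box. So there is no internal proof to compare your sketch against, and I can only assess plausibility.

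Your outline is a reasonable reconstruction of what such a proof must contain, and it is consistent with the cues the target paper leaves: the remark that Cogdell--Michel and Lamzouri proved the harmonic-weighted analogues, and the fact that $\mu_p$ in \eqref{eq:02271312} is the $p$-adic Plancherel measure rather than the plain Sato--Tate measure $\tfrac{2}{\pi}\sin^2\theta\,d\theta$. You correctly identify that the arithmetic factor $(1+p^{-1})(1-2\cos2\theta\cdot p^{-1}+p^{-2})^{-1}$ is exactly what appears when passing from the harmonic average to the natural one (unwinding $\omega_f$ via $L(1,\mathrm{sym}^2 f)$), which is the genuine content separating Proposition~\ref{prop:5.1} from the earlier weighted results. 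The rest of your plan --- truncate $L(\sigma,f)^s$ to a short Euler product via a zero-free region holding off an exceptional set, expand into a Dirichlet polynomial, diagonalize by Petersson, recover the tail $\prod_{p>y}F_{\sigma,p}(s)$ by Lemma~\ref{lem:3.5}, and control the excluded forms by zero-density plus a high-moment bound --- is the standard architecture in this literature (Granville--Soundararajan, Lamzouri, Liu--Royer--Wu). Your closing observation about why the range must shrink to $(\log q)(\log_2 q\log_3 q)^{-1}$ at $\sigma=1$ is also the right intuition: there $F_\sigma(\kappa)$ grows like a double exponential in $t$ rather than like $\exp(\kappa^{1/\sigma}/\log\kappa)$, so the balance against the exceptional set tightens by the extra $\log_2\cdot\log_3$ factors. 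I cannot certify detail-for-detail agreement with \cite{Mine2020+} without that paper in hand, but nothing in your plan is at odds with the structure the present paper presupposes.
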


Note that similar results were obtained by Cogdell--Michel \cite{CogdellMichel2004} and Lamzouri \cite{Lamzouri2011b} for the averages weighted by $\omega_f=(4\pi \langle f,f \rangle)^{-1}$. 
The results were applied to study the weighted distribution functions $\widetilde{\Phi}_q(\sigma,\tau)$ and $\widetilde{\Phi}(\sigma,\tau)$ in \cite{Lamzouri2010, Lamzouri2011b}. 
However, the method of this paper is different from theirs in terms of our using the following Esseen inequality. 

\begin{lemma}[Esseen inequality \cite{Loeve1977}]\label{lem:5.2}
Let $P$ and $Q$ be two probability measures on $(\mathbb{R}, \mathcal{B}(\mathbb{R}))$ with distribution functions $\Phi(\xi)=P((-\infty,\xi])$ and $\Psi(\xi)=Q((-\infty,\xi])$, respectively.
Assume that $\Psi$ is differentiable, and that $K=\sup_{\xi \in \mathbb{R}} |\Psi'(\xi)|$ is finite. 
Then we have 
\begin{gather}\label{eq:03211707}
\sup_{\xi \in \mathbb{R}} |\Phi(\xi)-\Psi(\xi)|
\leq \frac{2}{\pi} \int_{0}^{R} \frac{|\phi(v)-\psi(v)|}{v} \,dv
+\frac{24K}{\pi}R^{-1}
\end{gather}
for every $R>0$, where $\phi$ and $\psi$ are the characteristic functions defined as
\begin{gather*}
\phi(v)
=\int_{\mathbb{R}} e^{ivx} \,dP(x)
\quad\text{and}\quad
\psi(v)
=\int_{\mathbb{R}} e^{ivx} \,dQ(x). 
\end{gather*}
\end{lemma}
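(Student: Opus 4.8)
The statement is the classical Esseen smoothing inequality, a general fact of probability theory; since only its shape is needed in what follows, the cleanest option is to invoke \cite{Loeve1977}, but here is the line of argument I would follow if proving it afresh. Write $\Delta(\xi)=\Phi(\xi)-\Psi(\xi)$ and $\eta=\sup_{\xi\in\mathbb{R}}|\Delta(\xi)|$; the goal is to bound $\eta$ from above. The plan is to convolve $\Delta$ with a probability density $v_R$ whose characteristic function $\widehat{v_R}$ vanishes outside $[-R,R]$ and satisfies $0\le\widehat{v_R}\le1$ --- concretely the Fej\'er--P\'olya kernel
\begin{gather*}
v_R(y)=\frac{1-\cos(Ry)}{\pi R y^2}, \qquad \widehat{v_R}(v)=\left(1-\frac{|v|}{R}\right)_+.
\end{gather*}
Convolution serves two purposes at once: it replaces $\Delta$ by a function whose Fourier representation is supported on $[-R,R]$, hence controllable by $\int_0^R|\phi(v)-\psi(v)|/v\,dv$, and it leaves $\Delta$ essentially unchanged near its extremum, the error being $O(K/R)$, since the hypothesis that $K=\sup_\xi|\Psi'(\xi)|$ is finite makes $\Psi$ Lipschitz with constant $K$.

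For the Fourier side I would use that $\Delta$ is of bounded variation with $\Delta(\pm\infty)=0$ and $\int e^{ivx}\,d\Delta(x)=\phi(v)-\psi(v)$, so that integration by parts gives $\int e^{ivx}\Delta(x)\,dx=i(\phi(v)-\psi(v))/v$; since $\widehat{\Delta*v_R}=\widehat{\Delta}\,\widehat{v_R}$ is continuous and compactly supported, Fourier inversion is legitimate and yields, for every $u$,
\begin{gather*}
(\Delta*v_R)(u)=\frac{1}{2\pi}\int_{-R}^{R} e^{-ivu}\,\frac{i(\phi(v)-\psi(v))}{v}\left(1-\frac{|v|}{R}\right)\,dv,
\end{gather*}
whence $|(\Delta*v_R)(u)|\le\frac{1}{\pi}\int_0^R|\phi(v)-\psi(v)|/v\,dv$ for every $u$. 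For the real side I would fix a small $\epsilon>0$ and a point $\xi_0$ with $|\Delta(\xi_0)|\ge\eta-\epsilon$, reduce to the case $\Delta(\xi_0)\ge\eta-\epsilon$ by replacing $(\Phi,\Psi)$ with $(1-\Phi(-\,\cdot\,),1-\Psi(-\,\cdot\,))$ if necessary, and then combine the monotonicity of $\Phi$ with the Lipschitz bound on $\Psi$ to show that $\Delta$ stays above $(\eta-\epsilon)/2$ on an interval of length $\asymp\eta/K$ centred at a shifted point $\xi_0+h$ with $h\asymp\eta/K$. Evaluating the convolution at $u=\xi_0+h$, the bulk of $v_R$ then lies inside this interval, while the tail contributes at most $O(\eta)\cdot\int_{|y|>h}v_R\ll\eta\,(Rh)^{-1}\ll K/R$; putting the two estimates together and letting $\epsilon\to0$ will produce $\eta/2\le\frac{1}{\pi}\int_0^R|\phi-\psi|/v\,dv+O(K/R)$, which is \eqref{eq:03211707} up to the value of the constant in the second term --- the factor $2$ in front of the integral in \eqref{eq:03211707} is exactly this $\eta/2$.

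The main obstacle is the quantitative bookkeeping needed to obtain the sharp constants $2/\pi$ and $24/\pi$ rather than unspecified absolute ones: the Fej\'er--P\'olya kernel has a divergent first absolute moment, so one cannot simply estimate $\int|y|\,v_R(y)\,dy$, and must instead split the convolution integral into bulk and tail parts and bound the tail $\int_{|y|>h}v_R$ sharply, using $1-\cos\le2$ (or a kernel tailored to optimise the constant). One must also take care that the supremum defining $\eta$ need not be attained, because the distribution function $\Phi$ may have jumps --- this is precisely why the convolution is evaluated at the shifted point $\xi_0+h$ rather than at $\xi_0$ --- and must arrange the reduction so that $\sup(\Phi-\Psi)$ and $\sup(\Psi-\Phi)$ are treated on the same footing. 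Since none of this fine structure affects the way Lemma \ref{lem:5.2} is used here, it is in practice enough to cite \cite{Loeve1977}.
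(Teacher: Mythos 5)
The paper does not prove Lemma~\ref{lem:5.2}: it is stated as a quoted classical result with a citation to Lo\`eve's textbook, which is precisely the ``cleanest option'' your last sentence identifies. There is therefore no proof in the paper to compare against, and your decision to defer to \cite{Loeve1977} matches the paper's. Your supplementary sketch is the standard smoothing proof of Esseen's inequality as found in Lo\`eve, Feller, and Petrov: convolve $\Delta=\Phi-\Psi$ with the Fej\'er--P\'olya density $v_R$, whose Fourier transform is the triangle on $[-R,R]$; bound $\|\Delta*v_R\|_\infty$ from above via the integration-by-parts identity $\int e^{ivx}\Delta(x)\,dx = i(\phi(v)-\psi(v))/v$ and Fourier inversion, giving $\frac{1}{\pi}\int_0^R|\phi-\psi|/v\,dv$; and bound it from below by roughly $\eta/2 - O(K/R)$ by choosing a near-extremal point, using monotonicity of $\Phi$ and the $K$-Lipschitz property of $\Psi$ to keep $\Delta$ large on an interval of length $\asymp\eta/K$, and evaluating the convolution at a suitably shifted point to absorb a possible jump. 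This outline is correct, and the two subtleties you flag --- the divergent first moment of $v_R$, forcing the crude tail bound $\int_{|y|>h}v_R\ll(Rh)^{-1}$, and the possible non-attainment of $\sup|\Delta|$ --- are indeed the only delicate points. Since the constants $2/\pi$ and $24/\pi$ are swallowed into implicit constants at every application in Section~\ref{sec:5}, leaving the lemma as a citation is entirely appropriate.
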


Let $1/2<\sigma \leq1$ and $B \geq1$. 
Define the functions $U$ and $V$ on $\mathbb{R}$ as
\begin{align*}
U(\xi)
&=\frac{\# \left\{f \in B_2(q) \setminus \mathcal{E}_q ~\middle|~ \log{L}(\sigma,f)\leq \xi \right\}}
{\# B_2(q) \setminus \mathcal{E}_q}, \\
V(\xi)
&=\int_{-\infty}^{\xi} \mathcal{M}_\sigma(x) \,|dx|, 
\end{align*}
where $\mathcal{E}_q=\mathcal{E}_q(\sigma,B)$ is the subset of $B_2(q)$ as in Proposition \ref{prop:5.1}. 
Then we apply Lemma \ref{lem:5.2} with the probability measures defined as
\begin{gather*}
P(A)
=\frac{\displaystyle{\int_\mathbb{R} 1_{A+\tau}(\xi) e^{\kappa \xi} \,dU(\xi)}}
{\displaystyle{\int_\mathbb{R} {e}^{\kappa \xi} \,dU(\xi)}}
\quad\text{and}\quad
Q(A)
=\frac{\displaystyle{\int_\mathbb{R} 1_{A+\tau}(\xi) e^{\kappa \xi} \,dV(\xi)}}
{\displaystyle{\int_\mathbb{R} {e}^{\kappa \xi} \,dV(\xi)}}
\end{gather*}
for $A\in\mathcal{B}(\mathbb{R})$, where $\kappa=\kappa(\sigma,\tau)$ is the solution to equation \eqref{eq:03031628}. 
Here, we denote by $A+\tau$ the set $\{a+\tau \mid a \in A\}$, and $1_{S}$ is the indicator function of a set $S \subset \mathbb{R}$. 
We further put $\Phi(\xi)=P((-\infty,\xi])$ and $\Psi(\xi)=Q((-\infty,\xi])$ as above. 

\begin{lemma}\label{lem:5.3}
With the notation above, we have
\begin{gather*}
\sup_{\xi \in \mathbb{R}} |\Phi(\xi)-\Psi(\xi)|
\ll \frac{1}{(\log{q})^{B+1}}
+\frac{1}{R_\sigma(q) \sqrt{f''_\sigma(\kappa)}} 
\end{gather*}
if the condition $\kappa \leq aR_\sigma(q)/2$ is satisfied with the positive constant $a=a(\sigma,B)$ of Proposition \ref{prop:5.1}. 
Here, the implied constant depends on $\sigma$ and $B$. 
\end{lemma}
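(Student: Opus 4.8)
The plan is to apply the Esseen inequality of Lemma~\ref{lem:5.2} to the pair $(P,Q)$, so the first task is to identify the two characteristic functions and the constant $K$. Performing the change of variables $\xi=x+\tau$ in the definition of $Q$ and using $\int_{\mathbb{R}}e^{s\xi}\,dV(\xi)=\int_{\mathbb{R}}e^{s\xi}\mathcal{M}_\sigma(\xi)\,|d\xi|=F_\sigma(s)$, one sees that $Q$ has density $\mathcal{N}_\sigma(\,\cdot\,;\tau)$ with respect to $|dx|$. Hence $\Psi$ is continuously differentiable by Lemma~\ref{lem:4.4}, and \eqref{eq:03211627} in the proof of Theorem~\ref{thm:1.1} (whose main term is bounded by $1$ and whose error term is $o(1)$) gives, for $\tau$ — hence $\kappa$ — large as we assume, $K=\sup_\xi|\Psi'(\xi)|\ll f''_\sigma(\kappa)^{-1/2}$; moreover the characteristic function of $Q$ is exactly $\psi(v)=\widetilde{\mathcal{N}}_\sigma(v;\tau)=e^{-i\tau v}F_\sigma(\kappa+iv)/F_\sigma(\kappa)$ by \eqref{eq:03191630}. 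The same change of variables applied to $P$ gives $\phi(v)=e^{-i\tau v}M(\kappa+iv)/M(\kappa)$, where $M(s)=\frac{1}{\#(B_2(q)\setminus\mathcal{E}_q)}\sum_{f\in B_2(q)\setminus\mathcal{E}_q}L(\sigma,f)^s$ is the empirical moment-generating function over $B_2(q)\setminus\mathcal{E}_q$.

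The next step is to feed in Proposition~\ref{prop:5.1}. Because $\#\mathcal{E}_q\ll q\exp(-b\log q/\log_2 q)$ by \eqref{eq:03210327} and $\#B_2(q)\asymp q$, replacing the normalization by $\#B_2(q)$ in \eqref{eq:03210326} with the one by $\#(B_2(q)\setminus\mathcal{E}_q)$ costs only a factor $1+O(\exp(-b\log q/\log_2 q))$, which is absorbed into the error term; thus $M(s)=F_\sigma(s)+O\bigl(F_\sigma(\kappa)(\log q)^{-B-2}\bigr)$ uniformly for $s=\kappa+iv$ with $|s|\le aR_\sigma(q)$. Under the hypothesis $\kappa\le aR_\sigma(q)/2$, the choice $R:=aR_\sigma(q)/2\asymp R_\sigma(q)$ keeps $|\kappa+iv|\le aR_\sigma(q)$ for $|v|\le R$. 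Writing $\phi(v)-\psi(v)=e^{-i\tau v}\bigl(M(\kappa+iv)F_\sigma(\kappa)-F_\sigma(\kappa+iv)M(\kappa)\bigr)/\bigl(M(\kappa)F_\sigma(\kappa)\bigr)$, substituting the approximation for $M$, and using $|F_\sigma(\kappa+iv)|\le F_\sigma(\kappa)$ together with $M(\kappa)\asymp F_\sigma(\kappa)$, one obtains the uniform bound $|\phi(v)-\psi(v)|\ll(\log q)^{-B-2}$ for all $|v|\le R$.

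This bound is too weak near $v=0$ to control $\int_0^R|\phi(v)-\psi(v)|\,v^{-1}\,dv$, so it must be complemented with the trivial inequality $|\phi(v)-\psi(v)|=\bigl|\int_{\mathbb{R}}(e^{ivx}-1)\,d(P-Q)(x)\bigr|\le|v|\bigl(\int|x|\,dP+\int|x|\,dQ\bigr)$, valid since $P$ and $Q$ are probability measures. Here $\int|x|\,dQ\le(\int x^2\,dQ)^{1/2}$, and the second moment $\int x^2\,dQ=f''_\sigma(\kappa)$ is obtained by differentiating \eqref{eq:03191630} twice at $v=0$ as in the proof of Lemma~\ref{lem:4.4}; since $f''_\sigma(\kappa)\to 0$ by Propositions~\ref{prop:3.1} and~\ref{prop:3.2}, this gives $\int|x|\,dQ\ll 1$. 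For $\int|x|\,dP$ one can use a Chernoff-type tail estimate for the tilted measure $P$: bounding $P(X>t)$ and $P(X<-t)$ by comparing $\sum L(\sigma,f)^\kappa$ with $\sum L(\sigma,f)^{\kappa+1}$ and with $\#(B_2(q)\setminus\mathcal{E}_q)$ respectively, and invoking Proposition~\ref{prop:5.1} at $s=\kappa$ and $s=\kappa+1$, yields $\int|x|\,dP\ll\tau\ll\log q$ (a cruder bound $\int|x|\,dP\ll\log q$ also follows from the convexity estimate \eqref{eq:02141557}). Hence $|\phi(v)-\psi(v)|\ll|v|\log q$. Splitting $\int_0^R|\phi(v)-\psi(v)|\,v^{-1}\,dv$ at the point $v_1\asymp(\log q)^{-B-3}$ where the two estimates cross: the range $v\le v_1$ contributes $\ll v_1\log q\ll(\log q)^{-B-2}$, while the range $v_1\le v\le R$ contributes $\ll(\log q)^{-B-2}\log(R/v_1)\ll(\log q)^{-B-2}\log_2 q\ll(\log q)^{-B-1}$, using $\log(R/v_1)\ll\log_2 q$. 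Plugging the resulting bound for the integral together with $K\ll f''_\sigma(\kappa)^{-1/2}$ and $R\asymp R_\sigma(q)$ into \eqref{eq:03211707} gives $\sup_\xi|\Phi(\xi)-\Psi(\xi)|\ll(\log q)^{-B-1}+\bigl(R_\sigma(q)f''_\sigma(\kappa)^{1/2}\bigr)^{-1}$, which is the assertion.

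I expect the main obstacle to be the small-$v$ range: the uniform bound coming from Proposition~\ref{prop:5.1} alone does not tame the logarithmic divergence of $\int v^{-1}\,dv$, and one must bring in the auxiliary first-moment estimate for $P$ and $Q$ above, while keeping track that all the ambient quantities are harmless — namely $\tau\ll\log q$, $\log\kappa\ll\log q$, $\log R_\sigma(q)\ll\log_2 q$ and $f''_\sigma(\kappa)\ll 1$ (all from Propositions~\ref{prop:3.1} and~\ref{prop:3.2} together with $\kappa\le aR_\sigma(q)/2$) — so that the stray factors of $\log_2 q$ are absorbed into the exponent $B+1$.
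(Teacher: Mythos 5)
Your proof is correct, and it follows the same overall architecture as the paper's (apply the Esseen inequality of Lemma~\ref{lem:5.2} with $R\asymp R_\sigma(q)$, control $K$ via \eqref{eq:03211627}, feed Proposition~\ref{prop:5.1} into the bulk of the integral $\int_0^R|\phi-\psi|v^{-1}\,dv$, and patch the logarithmic divergence near $v=0$ with a separate estimate). Where you diverge from the paper is precisely in the treatment of the small-$v$ range, and your version is genuinely different. The paper writes $\phi(v)=e^{-iv\tau}(1+O(v\sqrt{M_q}\sqrt{F_\sigma(2\kappa)}/F_\sigma(\kappa)))$, $\psi(v)=e^{-iv\tau}(1+O(v\sqrt{M}\sqrt{F_\sigma(2\kappa)}/F_\sigma(\kappa)))$ via Cauchy--Schwarz and the second moments $M_q=\int|\xi|^2\,dU$, $M=\int|\xi|^2\,dV$; this produces the factor $\sqrt{F_\sigma(2\kappa)}\ll\exp(L_2\log q/\log_2 q)$ and forces the split point $r=\exp(-L\log q/\log_2 q)$ with $L=2L_2$ to make $\int_0^r$ negligible. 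You instead use the trivial bound $|\phi(v)-\psi(v)|\le|v|(\int|x|\,dP+\int|x|\,dQ)$, compute $\int|x|\,dQ\le\sqrt{f''_\sigma(\kappa)}\ll1$ directly from the characteristic function, and control $\int|x|\,dP$ by exponential tilting (Markov/Chernoff at $\lambda=\pm1$, or $\lambda=1$ for the upper tail and $\lambda=-\kappa$, i.e.\ comparison with $\sum 1$, for the lower tail), invoking Proposition~\ref{prop:5.1} at $s=\kappa+1$. This yields the cleaner polynomial bound $|\phi-\psi|\ll v\log q$ near $0$ and permits the elementary split point $v_1\asymp(\log q)^{-B-3}$. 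Both routes land on the same final estimate and both consume $\log(R/v_1)$ (resp.\ $\log(R/r)$) factors that are comfortably absorbed into $(\log q)^{-B-1}$. Your version is arguably more elementary, avoids the slightly delicate $\sqrt{F_\sigma(2\kappa)}$ bookkeeping, and requires only $s=\kappa+1$ rather than $s=2\kappa$ inside the admissible range of Proposition~\ref{prop:5.1}.

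One caveat worth noting: the parenthetical claim that a bound $\int|x|\,dP\ll\log q$ ``also follows from the convexity estimate \eqref{eq:02141557}'' is not quite right. The convexity bound controls $\log L(\sigma,f)$ only from above; since no unconditional lower bound for $|L(\sigma,f)|$ is available off the line $\sigma=1$, one cannot deduce $|\log L(\sigma,f)|\ll\log q$ this way. Fortunately the Chernoff argument (the one you actually use) does not need any pointwise lower bound---the exponential tilting by $e^{\kappa\xi}$ suppresses the left tail automatically, as your own calculation shows---so the remark is dispensable and the proof stands.
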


\begin{proof}
First, we check the assumption on $\Psi$ of Lemma \ref{lem:5.2}. 
Note that the identity
\begin{gather*}
Q(A)
=\frac{1}{F_\sigma(\kappa)} \int_{A+\tau} e^{\kappa x} \mathcal{M}_\sigma(x) \,|dx|
=\int_{A} \mathcal{N}_\sigma(x;\tau) \,|dx|
\end{gather*}
holds for the function $\mathcal{N}_\sigma$ of \eqref{eq:03211608}. 
Thus $\Psi(\xi)=\int_{-\infty}^{\xi} \mathcal{N}_\sigma(x;\tau) \,|dx|$ is differentiable, and we have 
\begin{gather}\label{eq:03212144}
K
=\sup_{\xi \in \mathbb{R}} |\Psi'(\xi)|
=\sup_{\xi \in \mathbb{R}} \frac{\mathcal{N}_\sigma(\xi;\tau)}{\sqrt{2\pi}}
\ll \frac{1}{\sqrt{f''_\sigma(\kappa)}}
<\infty
\end{gather}
by asymptotic formula \eqref{eq:03211627}. 
Hence Lemma \ref{lem:5.2} is available for the probability measures $P$ and $Q$. 
Next, we have the formula
\begin{align*}
&\int_\mathbb{R} {e}^{(\kappa+iv)\xi} \,dU(\xi) \\
&=\frac{1}{\# B_2(q) \setminus \mathcal{E}_q} \sum_{f \in B_2(q) \setminus \mathcal{E}_q} L(\sigma,f)^{\kappa+iv} \\
&=\frac{1}{\# B_2(q)} \sum_{f \in B_2(q) \setminus \mathcal{E}_q} L(\sigma,f)^{\kappa+iv}
+O\left(\exp\left(-b \frac{\log{q}}{\log_2{q}}\right) F_\sigma(\kappa) \right) 
\end{align*}
by applying \eqref{eq:03210327}. 
Furthermore, the equality
\begin{gather*}
\int_\mathbb{R} {e}^{(\kappa+iv)\xi} \,dV(\xi)
=F(\kappa+iv) 
\end{gather*}
is valid by definition. 
With the above preparations, we determine the parameter $R>0$ as $R=aR_\sigma(q)/2$ so that $|\kappa+iv|\leq a R_\sigma(q)$ is satisfied for $0<v<R$. 
Then Proposition \ref{prop:5.1} yields
\begin{gather}\label{eq:03211714}
\int_\mathbb{R} {e}^{(\kappa+iv)\xi} \,dU(\xi)
=\int_\mathbb{R} {e}^{(\kappa+iv)\xi} \,dV(\xi)
+O\left(\frac{F_\sigma(\kappa)}{(\log{q})^{B+2}}\right)
\end{gather}
for $0<v<R$. 
In addition, we see similarly that
\begin{gather}\label{eq:03211715}
\int_\mathbb{R} {e}^{\kappa \xi} \,dU(\xi)
\asymp \int_\mathbb{R} {e}^{\kappa \xi} \,dV(\xi)
=F_\sigma(\kappa) 
\end{gather}
holds. 
Using these formulas, we evaluate the integral of the right-hand side of \eqref{eq:03211707} as follows. 
The characteristic functions $\phi$ and $\psi$ are calculated as
\begin{gather}\label{eq:05111749}
\phi(v)
=\frac{\displaystyle{\int_\mathbb{R} {e}^{(\kappa+iv) \xi} \,dU(\xi)}}
{\displaystyle{\int_\mathbb{R}{e}^{\kappa \xi} \,dU(\xi)}} e^{-iv\tau}
\quad\text{and}\quad
\psi(v)
=\frac{\displaystyle{\int_\mathbb{R} {e}^{(\kappa+iv) \xi} \,dV(\xi)}}
{\displaystyle{\int_\mathbb{R} {e}^{\kappa \xi} \,dV(\xi)}} e^{-iv\tau}. 
\end{gather}
Hence we obtain
\begin{align*}
|\phi(v)-\psi(v)|
&\leq \frac{\displaystyle{\left|\int_\mathbb{R} {e}^{(\kappa+iv) \xi} \,d(U-V)(\xi)\right|}}
{\displaystyle{\int_\mathbb{R} {e}^{\kappa \xi} \,dV(\xi)}}
+\frac{\displaystyle{\left|\int_\mathbb{R} {e}^{\kappa \xi} \,d(U-V)(\xi)\right|}}
{\displaystyle{\int_\mathbb{R} {e}^{\kappa \xi} \,dV(\xi)}}\\
&\ll(\log{q})^{-B-2}
\end{align*}
for $0<v<R$ by using \eqref{eq:03211714} and \eqref{eq:03211715}. 
Put $r=\exp\left(-L\log{q}/\log_2{q}\right)$ with a constant $L=L(\sigma,B)>0$ chosen later. 
Then we have 
\begin{align}\label{eq:03212145}
\int_{r}^{R} \frac{|\phi(v)-\psi(v)|}{v} \,du
&\ll \left(\log{\frac{R}{r}}\right) (\log{q})^{-B-2} \\
&\ll (\log{q})^{-B-1}. \nonumber
\end{align}
For $0<v \leq r$, we estimate $\phi(v)$ and $\psi(v)$ by using the formula $e^{i \theta}=1+O(|\theta|)$ with arbitrary $\theta \in \mathbb{R}$. 
We have 
\begin{gather*}
\int_\mathbb{R} {e}^{(\kappa+iv) \xi} \,dU(\xi)
=\int_\mathbb{R} {e}^{\kappa \xi} \,dU(\xi)
+O\left(v \int_\mathbb{R} |\xi| {e}^{\kappa \xi} \,dU(\xi)\right). 
\end{gather*}
By the Cauchy--Schwarz inequality and \eqref{eq:03211715}, we obtain
\begin{gather*}
\int_\mathbb{R} |\xi| {e}^{\kappa \xi} \,dU(\xi)
\ll \sqrt{M_q} \sqrt{F_\sigma(2\kappa)},  
\end{gather*}
where we put
\begin{gather*}
M_q
=\int_\mathbb{R} |\xi|^2 \,dU(\xi)
=\frac{1}{\# B_2(q) \setminus \mathcal{E}_q} 
\sum_{f \in B_2(q) \setminus \mathcal{E}_q} |\log{L}(\sigma,f)|^2. 
\end{gather*}
Thus, $\phi$ is estimated as
\begin{gather}\label{eq:05111750}
\phi(v)
=e^{-iv \tau} \left(1+O\left(v \sqrt{M_q} \frac{\sqrt{F_\sigma(2\kappa)}}{F_\sigma(\kappa)}\right)\right) 
\end{gather}
by recalling \eqref{eq:05111749}. 
In a similar way, we obtain the formula
\begin{gather}\label{eq:05111751}
\psi(v)
=e^{-iv \tau} \left(1+O\left(v \sqrt{M} \frac{\sqrt{F_\sigma(2\kappa)}}{F_\sigma(\kappa)}\right)\right), 
\end{gather}
where $M$ is the constant represented as
\begin{gather*}
M
=\int_\mathbb{R} |\xi|^2 \,dV(\xi)
=\int_{\mathbb{R}} |x|^2 \mathcal{M}_\sigma(x) \,|dx|.
\end{gather*}
One can deduce from Proposition \ref{prop:5.1} the estimate $M_q-M \ll F_\sigma(\kappa)(\log{q})^{-B}$ by differentiating both sides of \eqref{eq:03210326} in $s$. 
Additionally, we use Propositions \ref{prop:3.1} and \ref{prop:3.2} to derive the bounds
\begin{align*}
\sqrt{F_\sigma(2\kappa)}
=\exp\left(\frac{1}{2}f_\sigma(2\kappa)\right) 
\leq
\begin{cases}
\displaystyle{\exp\left(L_1\frac{\kappa^{\frac{1}{\sigma}}}{\log{\kappa}}\right) }
& \text{if $1/2<\sigma<1$}, 
\\
\displaystyle{\exp\left(L_1\log_2{\kappa}\right) }
& \text{if $\sigma=1$}, 
\end{cases}
\end{align*}
where $L_1=L_1(\sigma)$ is a positive constant. 
Since $\kappa \leq aR_\sigma(q)/2$, there exists a positive constant $L_2=L_2(\sigma,B)$ such that
\begin{gather*}
\sqrt{F_\sigma(2\kappa)}
\leq \exp\left(L_2 \frac{\log{q}}{\log_2{q}}\right)
\end{gather*}
in both cases $1/2<\sigma<1$ and $\sigma=1$.
Hence, we evaluate the difference between $\phi$ and $\psi$ as 
\begin{gather*}
|\phi(v)-\psi(v)|
\ll v \left|\sqrt{M_q}-\sqrt{M}\right| \frac{\sqrt{F_\sigma(2\kappa)}}{F_\sigma(\kappa)}
\ll v \exp\left(L_2 \frac{\log{q}}{\log_2{q}}\right)
\end{gather*}
by \eqref{eq:05111750} and \eqref{eq:05111751}. 
Then, we choose the constant $L>0$ in the definition of $r$ as $L=2L_2$. 
We have 
\begin{gather}\label{eq:03212146}
\int_{0}^{r} \frac{|\phi(v)-\psi(v)|}{v} \,du
\ll \exp\left(-L_2 \frac{\log{q}}{\log_2{q}}\right). 
\end{gather}
Combining \eqref{eq:03212144}, \eqref{eq:03212145}, and \eqref{eq:03212146}, we deduce from Lemma \ref{lem:5.2} the desired result.
\end{proof}

\begin{proof}[Proof of Theorem \ref{thm:1.4}]
For $1/2<\sigma \leq1$, we define 
\begin{gather*}
\Phi_q^*(\sigma,\tau)
=\frac{\# \left\{f \in B_2(q) \setminus \mathcal{E}_q ~\middle|~ \log{L}(\sigma,f)>\tau \right\}}
{\# B_2(q) \setminus \mathcal{E}_q}, 
\end{gather*}
where $\mathcal{E}_q=\mathcal{E}_q(\sigma,B)$ is the subset of $B_2(q)$ as in Proposition \ref{prop:5.1}. 
Then the identities
\begin{align*}
\Phi_q^*(\sigma,\tau)
&=e^{-\tau\kappa} \int_\mathbb{R} {e}^{\kappa \xi} \,dU(\xi)
\int_0^\infty {e}^{-\kappa \xi} \,d\Phi(\xi), \\
\Phi(\sigma,\tau)
&=e^{-\tau\kappa} \int_\mathbb{R} {e}^{\kappa \xi}\,dV(\xi)
\int_0^\infty {e}^{-\kappa \xi} \,d\Psi(\xi) 
\end{align*}
hold by the definitions of the probability measures $P$ and $Q$. 
Therefore we have 
\begin{align}\label{eq:03212214}
\left|\Phi_q^*(\sigma,\tau)-\Phi(\sigma,\tau)\right|
&\leq e^{-\tau\kappa} \int_0^\infty {e}^{-\kappa \xi} \,d\Psi(\xi) 
\left|\int_\mathbb{R} {e}^{\kappa \xi} \,d(U-V)(\xi)\right| \\
&\qquad
+e^{-\tau\kappa} \int_\mathbb{R} {e}^{\kappa \xi} \,dU(\xi)
\left|\int_0^\infty {e}^{-\kappa \xi} \,d(\Phi-\Psi)(\xi)\right|. \nonumber
\end{align}
Suppose that the condition $\kappa \leq aR_\sigma(q)/2$ is satisfied. 
Since $\Psi(\xi)$ is represented as $\Psi(\xi)=\int_{-\infty}^{\xi} \mathcal{N}_\sigma(x;\tau) \,|dx|$, the upper bound
\begin{gather*}
\int_0^\infty {e}^{-\kappa \xi} \,d\Psi(\xi)
=\int_0^\infty {e}^{-\kappa \xi} \mathcal{N}_\sigma(x;\tau) \,|dx|
\ll \frac{1}{\kappa \sqrt{f''_\sigma(\kappa)}}
\end{gather*}
follows from asymptotic formula \eqref{eq:03211627}. 
Using \eqref{eq:03211714}, we further deduce 
\begin{gather*}
\int_\mathbb{R} {e}^{\kappa \xi} \,d(U-V)(\xi)
\ll \frac{F_\sigma(\kappa)}{(\log{q})^{B}}. 
\end{gather*}
Thus the first term of \eqref{eq:03212214} is estimated as 
\begin{gather}\label{eq:03212301}
e^{-\tau\kappa} \int_0^\infty {e}^{-\kappa \xi} \,d\Psi(\xi) 
\left|\int_\mathbb{R} {e}^{\kappa \xi} \,d(U-V)(\xi)\right|
\ll \frac{\Phi(\sigma,\tau)}{(\log{q})^B} 
\end{gather}
by Corollary \ref{cor:4.7}. 
Next, we estimate the second term of \eqref{eq:03212214} by applying \eqref{eq:03211715} and Lemma \ref{lem:5.3}. 
We have 
\begin{align*}
\int_0^\infty {e}^{-\kappa \xi} \,d(\Phi-\Psi)(\xi)
&\ll \sup_{\xi \in \mathbb{R}} |\Phi(\xi)-\Psi(\xi)| \\
&\ll \frac{1}{(\log{q})^{B+1}}
+\frac{1}{R_\sigma(q) \sqrt{f''_\sigma(\kappa)}} 
\end{align*}
by the integration by parts. 
Hence we derive
\begin{gather*}
e^{-\tau\kappa} \int_\mathbb{R} {e}^{\kappa \xi} \,dU(\xi)
\left|\int_0^\infty {e}^{-\kappa \xi} \,d(\Phi-\Psi)(\xi)\right| 
\ll \Phi(\sigma,\tau)
\left(\frac{\kappa \sqrt{f''_\sigma(\kappa)}}{(\log{q})^{B+1}}+\frac{\kappa}{R_\sigma(q)}\right) 
\end{gather*}
by using Corollary \ref{cor:4.7} again. 
Furthermore, in both cases $1/2<\sigma<1$ and $\sigma=1$, the estimate $\kappa \sqrt{f''_\sigma(\kappa)} \ll \log{q}$ follows from Propositions \ref{prop:3.1} and \ref{prop:3.2} since we suppose that $\kappa$ satisfies $\kappa \leq aR_\sigma(q)/2$.  
It yields
\begin{gather}\label{eq:03212302}
e^{-\tau\kappa} \int_\mathbb{R} {e}^{\kappa \xi} \,dU(\xi)
\left|\int_0^\infty {e}^{-\kappa \xi} \,d(\Phi-\Psi)(\xi)\right| 
\ll \Phi(\sigma,\tau)
\left(\frac{1}{(\log{q})^{B}}+\frac{\kappa}{R_\sigma(q)}\right). 
\end{gather}
By \eqref{eq:03212301} and \eqref{eq:03212302}, we obtain 
\begin{gather*}
\Phi_q^*(\sigma,\tau)-\Phi(\sigma,\tau)
\ll \Phi(\sigma,\tau)
\left(\frac{1}{(\log{q})^{B}}+\frac{\kappa}{R_\sigma(q)}\right).
\end{gather*}
The difference between $\Phi_q(\sigma,\tau)$ and $\Phi_q^*(\sigma,\tau)$ can be evaluated as
\begin{gather*}
\Phi_q(\sigma,\tau)-\Phi_q^*(\sigma,\tau)
\ll \exp\left(-b \frac{\log{q}}{\log_2{q}}\right) 
\end{gather*}
by using \eqref{eq:03210327}. 
Hence we arrive at the formula
\begin{gather}\label{eq:03212324}
\Phi_q(\sigma,\tau)
=\Phi(\sigma,\tau)
\left(1+O\left(\frac{1}{(\log{q})^{B}}+\frac{\kappa}{R_\sigma(q)}\right)\right)
+O\left(\exp\left(-b \frac{\log{q}}{\log_2{q}}\right)\right) 
\end{gather}
for $1/2<\sigma \leq1$. 
In the case $1/2<\sigma<1$, we recall that $\kappa$ satisfies
\begin{gather*}
\kappa \asymp (\tau \log{\tau})^{\frac{\sigma}{1-\sigma}}
\end{gather*}
by Proposition \ref{prop:4.2}. 
Thus one can take a small positive constant $c(\sigma,B)$ so that the condition $\kappa \leq aR_\sigma(q)/2$ holds for $1\ll \tau \leq c(\sigma,B) (\log{q})^{1-\sigma}(\log_2{q})^{-1}$.
Furthermore, Corollary \ref{cor:1.2} yields
\begin{gather*}
\Phi(\sigma,\tau)
\gg \exp\left(-\frac{b}{2} \frac{\log{q}}{\log_2{q}}\right)
\end{gather*}
in the range $1\ll \tau \leq c(\sigma,B) (\log{q})^{1-\sigma}(\log_2{q})^{-1}$ if $c(\sigma,B)$ is sufficiently small. 
Thus, the desired conclusion
\begin{gather*}
\Phi_q(\sigma,\tau)
=\Phi(\sigma,\tau)
\left(1+O\left(\frac{1}{(\log{q})^{B}}+\frac{(\tau \log{\tau})^{\frac{\sigma}{1-\sigma}}}{(\log{q})^\sigma}\right)\right) 
\end{gather*}
follows from \eqref{eq:03212324}. 
If $\sigma=1$, then we have $\kappa \asymp e^t$ by Proposition \ref{prop:4.3}, where we put $\tau=2\log{t}+2\gamma$. 
Hence there exists a large positive constant $c(B)$ such that $\kappa \leq aR_1(q)/2$ holds for $1\ll t \leq \log_2{q}-\log_3{q}-\log_4{q}-c(B)$. 
In this case, we have the lower bound
\begin{gather*}
\Phi(1,\tau)
\gg \exp\left(-\frac{b}{2} \frac{\log{q}}{\log_2{q}}\right)
\end{gather*}
by Corollary \ref{cor:1.3} in the range $1\ll t \leq \log_2{q}-\log_3{q}-\log_4{q}-c(B)$ with $c(B)$ large enough. 
Therefore \eqref{eq:03212324} yields
\begin{gather*}
\Phi_q(1,\tau)
=\Phi(1,\tau)
\left( 1+O\left(\frac{1}{(\log{q})^B}
+\frac{e^t}{(\log{q})(\log_2{q}\,\log_3{q})^{-1}}\right) \right)
\end{gather*}
as desired. 
\end{proof}

\begin{proof}[Proof of Corollaries \ref{cor:1.5} and \ref{cor:1.6}]
If we put $\Phi_q(\sigma,\tau)=\Phi(\sigma,\tau)\left(1+E_q(\sigma,\tau)\right)$, then Theorem \ref{thm:1.4} $(\mathrm{i})$ deduces the bound
\begin{gather}\label{eq:03232221}
E_q(\sigma,\tau)
\ll \frac{1}{\log{q}}+\frac{(\tau \log{\tau})^{\frac{\sigma}{1-\sigma}}}{(\log{q})^\sigma}. 
\end{gather}
for $1/2<\sigma<1$ in the range $1\ll \tau \leq c(\sigma,1) (\log{q})^{1-\sigma}(\log_2{q})^{-1}$. 
We further apply Corollary \ref{cor:1.2} to obtain
\begin{align*}
\log{\Phi_q(\sigma,\tau)} 
&=\log{\Phi(\sigma,\tau)}+O\left(|E_q(\sigma,\tau)|\right) \\
&=-A(\sigma)\tau^{\frac{1}{1-\sigma}} (\log{\tau})^{\frac{\sigma}{1-\sigma}} 
\Bigg\{ \sum_{n=0}^{N-1} \frac{A_{n,\sigma}(\log_2{\tau})}{(\log{\tau})^n}+ \\
&\qquad\qquad
+O\left( \left(\frac{\log_2{\tau}}{\log{\tau}}\right)^N 
+\tau^{-\frac{1}{1-\sigma}} (\log{\tau})^{-\frac{\sigma}{1-\sigma}} |E_q(\sigma,\tau)| \right) \Bigg\}. 
\end{align*}
It completes the proof of Corollary \ref{cor:1.5} since we obtain
\begin{gather*}
\tau^{-\frac{1}{1-\sigma}} (\log{\tau})^{-\frac{\sigma}{1-\sigma}} |E_q(\sigma,\tau)|
\ll \left(\frac{\log_2{\tau}}{\log{\tau}}\right)^N
\end{gather*}
in the range $1\ll \tau \leq c(\sigma,1) (\log{q})^{1-\sigma}(\log_2{q})^{-1}$  by \eqref{eq:03232221}. 
Corollary \ref{cor:1.6} can be proved similarly. 
\end{proof}

%\bibliographystyle{amsplain}
%\bibliography{refs}

\begin{thebibliography}{10}

\bibitem{BlomerHarcosMichel2007}
V.~Blomer, G.~Harcos, and P.~Michel, \emph{Bounds for modular {$L$}-functions
  in the level aspect}, Ann. Sci. \'{E}cole Norm. Sup. (4) \textbf{40} (2007),
  no.~5, 697--740. \MR{2382859}

\bibitem{BohrJessen1930}
H.~Bohr and B.~Jessen, \emph{\"{U}ber die {W}erteverteilung der {R}iemannschen
  {Z}etafunktion}, Acta Math. \textbf{54} (1930), no.~1, 1--35. \MR{1555301}

\bibitem{BohrJessen1932}
\bysame, \emph{\"{U}ber die {W}erteverteilung der {R}iemannschen
  {Z}etafunktion}, Acta Math. \textbf{58} (1932), no.~1, 1--55. \MR{1555343}

\bibitem{ChowlaErdos1951}
S.~Chowla and P.~Erd\"{o}s, \emph{A theorem on the distribution of the values
  of {$L$}-functions}, J. Indian Math. Soc. (N.S.) \textbf{15} (1951), 11--18.
  \MR{44566}

\bibitem{CogdellMichel2004}
J.~Cogdell and P.~Michel, \emph{On the complex moments of symmetric power
  {$L$}-functions at {$s=1$}}, Int. Math. Res. Not. (2004), no.~31, 1561--1617.
  \MR{2035301}

\bibitem{DukeFriedlanderIwaniec1994}
W.~Duke, J.~B. Friedlander, and H.~Iwaniec, \emph{Bounds for automorphic
  {$L$}-functions. {II}}, Invent. Math. \textbf{115} (1994), no.~2, 219--239.
  \MR{1258904}

\bibitem{DukeFriedlanderIwaniec2001}
\bysame, \emph{Bounds for automorphic {$L$}-functions. {III}}, Invent. Math.
  \textbf{143} (2001), no.~2, 221--248. \MR{1835388}

\bibitem{EndoInoueMine2020+}
K.~Endo, S.~Inoue, and M.~Mine, \emph{On the value-distribution of iterated
  integrals of the logarithm of the {R}iemann zeta-function {II}:
  {P}robabilistic aspects}, 2021, preprint,
  \url{https://arxiv.org/abs/2105.04781}.

\bibitem{GranvilleSoundararajan2003}
A.~Granville and K.~Soundararajan, \emph{The distribution of values of
  {$L(1,\chi_d)$}}, Geom. Funct. Anal. \textbf{13} (2003), no.~5, 992--1028.
  \MR{2024414}

\bibitem{GranvilleSoundararajan2006}
\bysame, \emph{Extreme values of {$|\zeta(1+it)|$}}, The {R}iemann zeta
  function and related themes: papers in honour of {P}rofessor {K}.
  {R}amachandra, Ramanujan Math. Soc. Lect. Notes Ser., vol.~2, Ramanujan Math.
  Soc., Mysore, 2006, pp.~65--80. \MR{2335187}

\bibitem{HattoriMatsumoto1999}
T.~Hattori and K.~Matsumoto, \emph{A limit theorem for {B}ohr-{J}essen's
  probability measures of the {R}iemann zeta-function}, J. Reine Angew. Math.
  \textbf{507} (1999), 219--232. \MR{1670215}

\bibitem{Ihara2008}
Y.~Ihara, \emph{On ``{$M$}-functions'' closely related to the distribution of
  {$L'/L$}-values}, Publ. Res. Inst. Math. Sci. \textbf{44} (2008), no.~3,
  893--954. \MR{2451613}

\bibitem{IwaniecKowalski2004}
H.~Iwaniec and E.~Kowalski, \emph{Analytic number theory}, American
  Mathematical Society Colloquium Publications, vol.~53, American Mathematical
  Society, Providence, RI, 2004. \MR{2061214}

\bibitem{Lamzouri2010}
Y.~Lamzouri, \emph{Distribution of values of {$L$}-functions at the edge of the
  critical strip}, Proc. Lond. Math. Soc. (3) \textbf{100} (2010), no.~3,
  835--863. \MR{2640292}

\bibitem{Lamzouri2011b}
\bysame, \emph{On the distribution of extreme values of zeta and
  {$L$}-functions in the strip {$\frac{1}{2}<\sigma<1$}}, Int. Math. Res. Not.
  IMRN (2011), no.~23, 5449--5503. \MR{2855075}

\bibitem{LamzouriLesterRadziwill2019}
Y.~Lamzouri, S.~J. Lester, and M.~Radziwi{\l\l}, \emph{Discrepancy bounds for
  the distribution of the {R}iemann zeta-function and applications}, J. Anal.
  Math. \textbf{139} (2019), no.~2, 453--494. \MR{4041109}

\bibitem{LebacqueZykin2018}
P.~Lebacque and A.~Zykin, \emph{On {$M$}-functions associated with modular
  forms}, Mosc. Math. J. \textbf{18} (2018), no.~3, 437--472. \MR{3860846}

\bibitem{LiuRoyerWu2008}
J.~Liu, E.~Royer, and J.~Wu, \emph{On a conjecture of {M}ontgomery-{V}aughan on
  extreme values of automorphic {$L$}-functions at 1}, Anatomy of integers, CRM
  Proc. Lecture Notes, vol.~46, Amer. Math. Soc., Providence, RI, 2008,
  pp.~217--245. \MR{2437979}

\bibitem{Loeve1977}
M.~Lo\`eve, \emph{Probability theory. {I}}, fourth ed., Springer-Verlag, New
  York-Heidelberg, 1977, Graduate Texts in Mathematics, Vol. 45. \MR{0651017}

\bibitem{MatsumotoUmegaki2018}
K.~Matsumoto and Y.~Umegaki, \emph{On the value-distribution of the difference
  between logarithms of two symmetric power {$L$}-functions}, Int. J. Number
  Theory \textbf{14} (2018), no.~7, 2045--2081. \MR{3831410}

\bibitem{MatsumotoUmegaki2019}
\bysame, \emph{On the density function for the value-distribution of
  automorphic {$L$}-functions}, J. Number Theory \textbf{198} (2019), 176--199.
  \MR{3912935}

\bibitem{MatsumotoUmegaki2020}
\bysame, \emph{On the value-distribution of symmetric power {$L$}-functions},
  Number theory: Arithmetic, Diophantineand and Transcendence, Proceeding of
  Ropar Conference, RMS-Lecture Notes Series, vol.~26, Ramanujan Math. Soc.,
  Mysore, 2020, pp.~147--167.

\bibitem{Mine2020+}
M.~Mine, \emph{Probability density functions attached to random {E}uler
  products for automorphic {$L$}-functions}, 2020, preprint,
  \url{https://arxiv.org/abs/2011.07504}.

\bibitem{Miyake2006}
T.~Miyake, \emph{Modular forms}, english ed., Springer Monographs in
  Mathematics, Springer-Verlag, Berlin, 2006, Translated from the 1976 Japanese
  original by Yoshitaka Maeda. \MR{2194815}

\bibitem{Watson1995}
G.~N. Watson, \emph{A treatise on the theory of {B}essel functions}, Cambridge
  Mathematical Library, Cambridge University Press, Cambridge, 1995, Reprint of
  the second (1944) edition. \MR{1349110}

\bibitem{Wu2007}
J.~Wu, \emph{Note on a paper by {A}. {G}ranville and {K}. {S}oundararajan:
  ``{T}he distribution of values of {$L(1,\chi_d)$}''}, J. Number Theory
  \textbf{123} (2007), no.~2, 329--351. \MR{2300818}

\bibitem{Xiao2016}
X.~Xiao, \emph{Distribution of values of symmetric power {$L$}-functions at the
  edge of the critical strip}, J. Number Theory \textbf{164} (2016), 223--268.
  \MR{3474387}

\end{thebibliography}

\providecommand{\bysame}{\leavevmode\hbox to3em{\hrulefill}\thinspace}
\providecommand{\MR}{\relax\ifhmode\unskip\space\fi MR }
% \MRhref is called by the amsart/book/proc definition of \MR.
\providecommand{\MRhref}[2]{%
  \href{http://www.ams.org/mathscinet-getitem?mr=#1}{#2}
}
\providecommand{\href}[2]{#2}

\end{document}